\DeclareRobustCommand{\cev}[1]{%
	{\mathpalette\do@cev{#1}}%
}
\newcommand{\do@cev}[2]{%
	\vbox{\offinterlineskip
		\sbox\z@{$\m@th#1 x$}%
		\ialign{##\cr
			\hidewidth\reflectbox{$\m@th#1\vec{}\mkern2mu$}\hidewidth\cr
			\noalign{\kern-\ht\z@}
			$\m@th#1#2$\cr
		}%
	}%
}
\date{}
\providecommand{\abs}[1]{\lvert#1\rvert}
\theoremstyle{plain}
\newtheorem{Theorem}{Theorem}[section]
\newtheorem{Corollary}[Theorem]{Corollary}
\newtheorem{Lemma}[Theorem]{Lemma}
\newtheorem{Proposition}[Theorem]{Proposition}
\newtheorem{Definition}[Theorem]{Definition}
\newtheorem{Remark}[Theorem]{Remark}
\newtheorem{Example}[Theorem]{Example}
\renewcommand{\d}{\operatorname{d}}
\newcommand{\sgn}{\operatorname{sgn}}
\newcommand{\diag}{\operatorname{diag}}
\newcommand{\C}{\mathbb{C}}
\newcommand{\A}{\mathbb{A}}
\newcommand{\N}{\mathbb{N}}
\begin{document}
	
		\title[Uvarov Perturbations for Mixed-Type Multiple Orthogonality]{ Uvarov Perturbations for Multiple Orthogonal Polynomials\\ of the Mixed Type on the Step-Line}

	\author[M Mañas]{Manuel Mañas$^{1}$}
	
	\author[M Rojas]{Miguel Rojas$^{2}$}
	\address{Departamento de Física Teórica, Universidad Complutense de Madrid, Plaza Ciencias 1, 28040-Madrid, Spain}
	\email{$^{1}$manuel.manas@ucm.es}
	\email{$^{2}$migroj01@ucm.es}

	\keywords{Mixed multiple orthogonal polynomials, Christoffel perturbations, Christoffel formulas, spectral theory of matrix polynomials}

\begin{abstract}
 Uvarov-type perturbations for mixed-type multiple orthogonal polynomials on the step line are investigated within a matrix-analytic framework. The transformations considered involve both rational and additive modifications of a rectangular matrix of measures, implemented through left and right multiplication by regular matrix polynomials together with the addition of finitely many discrete matrix masses. These operations induce structured finite-band modifications of the corresponding moment matrix and rational transformations of the associated Markov–Stieltjes matrix function. Explicit Uvarov-type connection formulas are obtained, relating the perturbed and unperturbed families of mixed-type multiple orthogonal polynomials, and determinant conditions ensuring the existence of the new orthogonality are established. The algebraic consequences of these transformations are analyzed, showing their effect on the structure of the block-banded recurrence operators and on the spectral data encoded in the Markov–Stieltjes matrix functions. As an illustrative example, nontrivial Uvarov-type perturbations of the Jacobi–Piñeiro multiple orthogonal polynomials are presented. The results unify Christoffel, Geronimus, and Uvarov transformations within a single matrix framework, providing a linear-algebraic perspective on rational and additive spectral transformations of structured moment matrices and their corresponding recurrence operators.
\end{abstract}

	\subjclass{42C05, 33C45, 33C47, 47B39, 47B36}
	\maketitle
\tableofcontents
\section{Introduction}

Multiple orthogonal polynomials constitute a rich and flexible class of special functions with strong connections to matrix analysis, operator theory, and spectral methods. Unlike the classical orthogonal polynomials, which are associated with a single scalar weight, multiple orthogonal polynomials (MOPs) involve several measures and weights simultaneously, giving rise to coupled orthogonality relations and to structured block-matrix representations of their recurrence coefficients. These block structures naturally lead to a matrix formulation of orthogonality and to linear-algebraic tools such as Gauss–Borel factorizations, low-rank structured modifications, and block-banded spectral decompositions. The combination of analytic and algebraic perspectives makes multiple orthogonal polynomials a natural object of study within modern linear algebra and its applications to mathematical physics, probability, and numerical analysis.

The theory of MOPs is deeply connected with Hermite–Padé approximation and constructive function theory. Classic sources include Nikishin and Sorokin’s monograph~\cite{nikishin_sorokin} and Van Assche’s chapter in~\cite[Ch.~23]{Ismail}. Their integrable and algebraic aspects were later analyzed from a Gauss–Borel factorization viewpoint in~\cite{afm} and from a dynamical-system perspective in~\cite{andrei_walter}. Asymptotic and analytic properties of their zeros were studied in~\cite{Aptekarev_Kaliaguine_Lopez}, while further developments concerning random matrix models were presented in~\cite{Bleher_Kuijlaars}. Beyond their analytic importance, MOPs have become a cornerstone of the spectral theory of banded block matrices and the algebraic characterization of recurrence operators.

Mixed-type multiple orthogonal polynomials and their associated Riemann–Hilbert problems have appeared in a wide range of applications. They describe non-intersecting Brownian motions and Brownian bridges~\cite{Evi_Arno}, provide explicit solutions to multicomponent Toda and KP-type hierarchies~\cite{adler,afm}, and play a role in number-theoretical constructions, such as Apéry’s proof of the irrationality of $\zeta(3)$~\cite{Apery} and related results for other zeta values~\cite{Ball_Rivoal}, see also \cite{Zudilin}. Their algebraic structure also governs the spectral analysis of block-banded and Hessenberg matrices, as shown in~\cite{aim,phys-scrip,BTP,Contemporary,laa}. Moreover, they have proved essential in modeling Markov chains and random walks with multi-step interactions beyond the classical tridiagonal case~\cite{CRM,finite,hypergeometric,JP}.

From a linear-algebraic standpoint, orthogonal polynomial transformations can be understood as structured algebraic modifications of moment matrices and of the corresponding block-banded recurrence operators. The pioneering transformation of this type was introduced by Christoffel in 1858~\cite{christoffel}, when he investigated the effect of multiplying a measure $\d\mu(x)$ by a polynomial $p(x)=(x-q_1)\cdots(x-q_N)$ in the context of Gaussian quadrature. The resulting Christoffel formulas describe how the new orthogonal polynomials, associated with $\d\hat{\mu}(x)=p(x)\d\mu(x)$, can be expressed as determinants built from the original ones~\cite{Chi,Sze,Gaut}. This transformation has a direct matrix interpretation as a finite-band modification of the moment matrix, preserving its Hankel-type symmetry and inducing a similarity-type transformation on the associated Jacobi or Hessenberg operator.

The inverse operation, introduced by Geronimus~\cite{Geronimus,Maro}, corresponds to dividing the measure by $(x-a)$ and possibly adding a discrete mass at $x=a$. From a matrix point of view, this produces a rational transformation of the Stieltjes (or Markov) function and an additive correction of low rank in the moment matrix. Both Christoffel and Geronimus transformations are particular cases of the broader class of Darboux transformations~\cite{matveev,darboux2,moutard,Bue1,Yoon}, which induce $LU$ or $UL$ factorizations of the recurrence operator. In this sense, orthogonal polynomial transformations serve as matrix factorizations linking two block-banded operators that share most of their spectral data.

A further level of generality was reached with the work of V.~B.~Uvarov~\cite{Uva}, who in 1969 investigated rational modifications of measures of the form
\[
\d\hat{\mu}(x)=\frac{p(x)}{q(x)}\d\mu(x),
\]
where $p(x)$ and $q(x)$ are polynomials whose zeros lie outside the support of $\d\mu$.
This transformation, now known as the \emph{Uvarov transformation}, extends the Christoffel and Geronimus cases to the rational setting and can be interpreted algebraically as a structured rational modification of the moment matrix. In Uvarov’s original formulation no additional discrete masses were included at the zeros of $q(x)$; such terms were later introduced by Maroni and Zhedanov to ensure the existence of a regular orthogonality measure when $q$ vanishes on the support. These generalizations are nowadays referred to as \emph{Uvarov-type} or \emph{additive Uvarov transformations}. They encompass both the classical rational modification considered by Uvarov and its completion through discrete or distributional terms. For an introductory text on these issues see \cite{manas}.

From a matrix-analytic viewpoint, these transformations correspond to rational spectral modifications of the associated Markov–Stieltjes matrix function and to localized algebraic alterations of the moment matrix. They thus preserve most of the block structure of the original banded recurrence operator while altering its spectral data in a controlled algebraic way. This rational framework unifies the multiplicative (Christoffel), divisive (Geronimus), and rational (Uvarov) transformations as specific instances of structured spectral deformations of Hankel-type and block-banded matrices.

In a series of papers~\cite{AAGMM,AGMM,AGMM2,bfm}, a matrix framework for these transformations has been developed. In~\cite{AAGMM} Christoffel perturbations of monic matrix orthogonal polynomials were analyzed, while~\cite{AGMM} addressed general Geronimus transformations and their spectral interpretation via regular matrix polynomials. Later,~\cite{AGMM2} introduced the Geronimus–Uvarov setting and its applications to non-Abelian Toda lattices. The work~\cite{bfm} further extended these ideas to two-weight systems, establishing explicit connection formulas between type~I and type~II MOPs and their associated vector Stieltjes functions.

Recently, other authors have also explored rational and multiplicative perturbations in the context of multiple orthogonality. In particular, Kozhan and Vaktnäs have studied Christoffel transformations for multiple orthogonal polynomials~\cite{KozhanChris} and provided determinantal formulas for rational perturbations of multiple orthogonality measures~\cite{KozhanUvarov}. Their approach focuses mainly on scalar-type rational modifications and on systems of multiple orthogonal polynomials not restricted to the step line, while the perturbation polynomials involved are diagonal and explicit formulas are obtained only for one of the families. By contrast, the present analysis is based on a fully matrix-analytic framework that allows for general left and right matrix polynomial multiplications of a rectangular matrix of measures, leading to non-diagonal and structurally coupled perturbations.

These developments complement the recent series of works~\cite{Manas_Rojas_Christoffel,Manas_Rojas_Geronimus}, where general Christoffel and Geronimus perturbations for mixed-type multiple orthogonal polynomials were introduced. In those papers both left and right matrix polynomial actions on the rectangular matrix of measures were analyzed, explicit connection formulas were established, and the existence of perturbed orthogonality was characterized through the non-cancellation of certain $\tau$–determinants. The present contribution extends that framework to encompass general Uvarov-type perturbations, thereby completing the natural hierarchy of Christoffel, Geronimus, and Uvarov transformations within a unified matrix perspective.

The present paper continues this research line, focusing on \emph{Uvarov-type perturbations for mixed-type multiple orthogonal polynomials} on the step line. From the viewpoint of matrix analysis, additive and rational modifications of a rectangular matrix of measures represented through two regular matrix polynomials $L(x)$ and $R(x)$ are studied, satisfying
\[
\d\hat{\mu}(x) = L(x)\d\mu(x)^{-1}(x) + \sum_{j=1}^{m} W_j\delta(x - x_j),
\]
where each $W_j$ encodes the discrete matrix contribution at $x_j$. This construction leads to localized algebraic modifications of the moment matrix and to rational transformations of the associated matrix Markov–Stieltjes function. Explicit Uvarov-type formulas linking the perturbed and unperturbed multiple orthogonal polynomials are derived, determinant criteria for the existence of perturbed biorthogonality are identified, and the algebraic effect of these structured modifications on the corresponding block-banded recurrence operators is analyzed.

Finally, as an illustrative example, nontrivial Uvarov-type perturbations of the Jacobi–Piñeiro multiple orthogonal polynomials with three weights are examined. This example highlights the interaction between discrete and continuous parts of the modified matrix of measures, the impact on the structure and rank of the moment matrix, and the induced deformation of the spectral data of the underlying operator. The framework developed here unifies multiplicative, divisive, and rational matrix spectral transformations within a common algebraic and analytic setting, strengthening the link between the theory of multiple orthogonality and modern linear algebra.

The paper is organized as follows.
Section~\ref{S:MOPs} recalls the main concepts and notation concerning mixed-type multiple orthogonal polynomials and their representation through moment matrices and Gauss–Borel factorizations.
Section~\ref{S:CSoJC} reviews basic notions on the spectrum and Jordan chains of matrix polynomial perturbations, which play a crucial role in the subsequent analysis.
Next, Section~\ref{S:Matrix_structure_Uvarov} introduces the matrix framework for rational and additive perturbations of measures.
There, the regular matrix polynomials $L(x)$ and $R(x)$ implementing left and right multiplications of the rectangular matrix of measures are defined, and the fundamental relations between the corresponding moment matrices are established.

Section~\ref{S:Uvarov simple} deals with Uvarov-type perturbations associated with simple poles of the rational modification.
Explicit connection formulas linking the perturbed and unperturbed families of mixed-type multiple orthogonal polynomials are derived, and the structure of the induced low-rank corrections in both the moment matrix and the block-banded recurrence operator is identified.
The general situation, including higher-order poles and multiple nodes, is analyzed in Section~\ref{S:Uvarov multiple zeros}, where determinant representations for the perturbed polynomials are obtained, and the existence of the new orthogonality is characterized through the non-vanishing of suitable $\tau$–determinants.
Section~\ref{S:Markov-Stieltjes} is devoted to the study of the associated Markov–Stieltjes matrix function.
It is shown that Uvarov-type perturbations correspond to rational spectral transformations of this function, interpreted as structured algebraic modifications of the underlying block-banded matrices.
A case study is presented in Section~\ref{S:case study}, where the general theory is illustrated with an explicit example involving the Jacobi–Piñeiro multiple orthogonal polynomials.
This example highlights the algebraic and spectral effects of rational and additive perturbations and illustrates how the structure of the recurrence matrices is modified.

Finally, Section~\ref{S:existence} addresses the relation between the non-vanishing of the $\tau$–determinants and the existence of the Uvarov-type transformation.
Unlike the Christoffel case—where sufficiency follows from divisibility theorems for matrix polynomials—and the Geronimus case—where alternative spectral arguments apply—the Uvarov setting remains incomplete.
A partial result is established: sufficiency holds under an additional structural assumption on the banded connection matrix encoding the Uvarov transformation, namely, the existence of an $LU$ factorization (or, equivalently, the invertibility of suitable principal minors) consistent with the non-cancellation of the relevant $\tau$–determinants.
This additional hypothesis appears to be required to ensure the regularity of the transformed moment matrix, and its removal remains an open problem for future research.

A brief summary of conclusions and further research directions closes the paper.

        \subsection{Mixed-Type Multiple Orthogonal Polynomials on the Step Line}\label{S:MOPs}

For  $p,q\in\N$, consider the setup where a rectangular matrix of measures is defined:
\[
\d	\mu=\begin{bNiceMatrix}
	\d\mu_{1,1}&\Cdots &\d\mu_{1,p}\\
	\Vdots & & \Vdots\\
	\d	\mu_{q,1}&\Cdots &\d\mu_{q,p}
\end{bNiceMatrix},
\]
with each \(\mu_{b,a}\) being a measure supported on the interval \(\Delta_{a,b} \subseteq \mathbb{R}\). The support of the matrix of measures is said to be $\Delta\coloneq \cup_{a,b}\Delta_{a,b}$. In this paper we use the notation $\N\coloneq \{1,2,\dots\}$ and $N_0\coloneq \{0,1,2,\dots\}$.

For \(r \in \mathbb{N}\), we define the matrix of monomials:
\[
X_{[r]}(x) = \begin{bNiceMatrix}
	I_r \\
	xI_r \\
	x^2I_r \\
	\Vdots
\end{bNiceMatrix},
\]
and the moment matrix as:
\[
\mathscr{M}\coloneq \int_{\Delta} X_{[q]}(x) \d\mu(x) X_{[p]}^\top(x).
\]

If all leading principal submatrices \(\mathscr{M}^{[k]}\) are invertible, a $LU$ factorization exists:
%
%
\[
\mathscr{M} = S^{-1} H\bar S^{-\top},
\]
where \(S\), \(\bar S\), and \(H\) are lower unitriangular and  invertible diagonal matrices, respectively. 

Next, we define matrix polynomials associated with this factorization:
\[
\begin{aligned}
	B(x) &= S X_{[q]}(x), & A(x) &= X_{[p]}^\top(x) \bar S^\top H^{-1},
\end{aligned}
\]
where \(B(x)\) is monic.
The polynomial entries of \(B\) and \(A\) are:
\[
\begin{aligned}
	B &= \begin{bNiceMatrix}
	B^{(1)}_0 & \Cdots & B^{(q)}_0 \\[2pt] B^{(1)}_1 & \Cdots & B^{(q)}_1 \\[2pt] B^{(1)}_2 & \Cdots & B^{(q)}_2 \\ \Vdots[shorten-end=1.5pt] & & \Vdots[shorten-end=1.5pt] 
\end{bNiceMatrix}, &
A &= \left[\begin{NiceMatrix}
	A^{(1)}_0 & A^{(1)}_1 & A^{(1)}_2 & \Cdots \\ \Vdots & \Vdots & \Vdots & \\ \\A^{(p)}_0 & A^{(p)}_1 & A^{(p)}_2 & \Cdots
\end{NiceMatrix}\right].
\end{aligned}
\]
We will make use of the following notation,  
\[
\begin{aligned}
    B_n(x) & \coloneq \begin{bNiceMatrix}
        B^{(1)}_n(x) & \Cdots & B^{(q)}_n(x)
    \end{bNiceMatrix}, & A_n(x) & \coloneq \begin{bNiceMatrix}
        A_n^{(1)}(x) \\ \Vdots \\ \\A_n^{(p)}(x)
    \end{bNiceMatrix},
\end{aligned}
\]
when referring to the vector polynomials, and 
\[
\begin{aligned}
    \mathcal{B}_N(x) & \coloneq \begin{bNiceMatrix}
        B^{(1)}_{Nq}(x) & \Cdots & B^{(q)}_{Nq}(x) \\
        \Vdots & & \Vdots \\\\
        B^{(1)}_{Nq+q-1}(x) & \Cdots & B^{(q)}_{Nq+q-1}(x)
    \end{bNiceMatrix}, & \mathcal{A}_N(x) & \coloneq \begin{bNiceMatrix}
        A_{Np}^{(1)}(x) & \Cdots & A_{Np+p-1}^{(1)}(x) \\ 
        \Vdots & & \Vdots \\ \\
        A_{Np}^{(p)}(x) & \Cdots & A_{Np+p-1}^{(p)}(x)
    \end{bNiceMatrix},
\end{aligned}
\]
when referring to square matrices. 
The polynomials $B(x)$ and $A(x)$ satisfy 
\[
	\int_{\Delta} B(x)  \d\mu(x) A(x) = I, 
	\]
that lead to the following biorthogonality conditions 
\[
\int_{\Delta} \sum_{b=1}^q \sum_{a=1}^p B^{(b)}_n(x) \d\mu_{b,a}(x) A^{(a)}_m(x) = \delta_{n,m}.
\]

The $LU$ factorization yields:
\[
\begin{aligned}
	\int_\Delta B(x) \, \d\mu(x) X_{[p]}^\top(x) &= H\bar S^{-\top}, & \int_\Delta X_{[q]}(x) \, \d\mu(x) A(x)& = S^{-1},
\end{aligned}
\]
which gives rise to the following orthogonality relations:
\[
\begin{aligned}
	\int_\Delta x^l \sum_{a=1}^p \d\mu_{b,a}(x) A_n^{(a)}(x) &= 0, & b &\in\{1,\dots,q\},& l &\in\left\{0,\dots, \left\lceil\frac{n-b+2}{q}\right\rceil-1\right\} ,\\
\int_\Delta \sum_{b=1}^q B_n^{(b)}(x) \d\mu_{b,a}(x) x^l &= 0,  & a &\in\{1,\dots,p,\}, &l &\in\left\{0,\dots,\left\lceil\frac{n-a+2}{p}\right\rceil-1
\right\} .
\end{aligned}
\]

Consequently, the existence of the $LU$ factorization is equivalent to the existence of the orthogonality.

For \(r \in \mathbb{N}_0\), the shift matrix is:
\[
\Lambda_{[r]} \coloneq 
\left[\begin{NiceMatrix}
	0_r & I_r & 0_r & \Cdots \\ 0_r & 0_r & I_r & \Ddots \\ 0_r & 0_r & 0_r & \Ddots \\ \Vdots & \Ddots[shorten-end=8pt] & \Ddots[shorten-end=10pt] & \Ddots[shorten-end=13pt]
\end{NiceMatrix}\right],
\]
with \(\Lambda_{[1]} \eqcolon \Lambda\) and \(\Lambda_{[r]} = \Lambda^r\). These matrices satisfy:
\[
\Lambda_{[r]}X_{[r]}(x) = x X_{[r]}(x).
\]

            \subsubsection{Christoffel--Darboux Kernels}

We now turn our attention to key elements essential for the constructions in this paper: the kernel polynomials.
\begin{Definition}
	Let us introduce the Christoffel--Darboux (CD) kernel   $K^{[n]}\in \C^{p\times q}(x,y)$ as follows
	\begin{align} \label{CDkernel}
		K^{[n]}(x,y) & \coloneq \begin{bNiceMatrix}
			A_0^{(1)}(x) & \Cdots & A_{n}^{(1)}(x) \\
			\Vdots & & \Vdots \\
			A_0^{(p)}(x) & \Cdots & A_{n}^{(p)}(x)
		\end{bNiceMatrix} \begin{bNiceMatrix}
			B_0^{(1)}(y) & \Cdots & B_0^{(q)}(y) \\
			\Vdots & & \Vdots \\
			B_{n}^{(1)}(y) & \Cdots & B_{n}^{(q)}(y) 
		\end{bNiceMatrix},\\
		K^{[n]}_{a,b}(x,y) & = \sum_{i=0}^{n}A^{(a)}_i(x)B_i^{(b)}(y).
	\end{align}
\end{Definition}

This kernel matrix polynomial satisfies several interesting properties, similar to the  properties of the kernel polynomial in the scalar case.
\begin{Proposition} \label{ProyectorK}
	For any given monic matrix polynomial of degree $N$, i.e., 
	\[P(x) = \sum_{i=0}^N P_i x^i, \quad P_i \in \mathbb{C}^{p\times p}, \quad P_N = I_p \]
	the following projection property holds: 
	\begin{equation} \label{Eq ProyectorK}
		\begin{aligned}
			\int_{\Delta}K^{[n]}(x,t)\d \mu(t) P(t) &= P(x), &  n &\geq Np + p -1.
		\end{aligned}
	\end{equation}
\end{Proposition}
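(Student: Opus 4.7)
The plan is to expand the monic polynomial $P(t)$ in the monomial basis, translate it into the basis formed by the columns $A_i(x)$ via the $LU$ factorization of the moment matrix, and finally use the moment identity $\int B(t)\d\mu(t)X_{[p]}^{\top}(t)=H\bar S^{-\top}$ to match the resulting expression against $\int K^{[n]}(x,t)\d\mu(t)P(t)$.

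Concretely, I would write $P(t)=X_{[p]}^{\top}(t)\mathbf{P}$, where $\mathbf{P}$ is the infinite column (viewed block-wise as stacks of $p\times p$ blocks) whose $k$-th block is $P_k$ for $k=0,\dots,N$ and zero thereafter; the assumption $P_N=I_p$ will be used only to control the truncation index. From $A(x)=X_{[p]}^{\top}(x)\bar S^{\top}H^{-1}$ we get $X_{[p]}^{\top}(x)=A(x)H\bar S^{-\top}$, so $P(x)=A(x)\,H\bar S^{-\top}\mathbf{P}$. On the other hand, using the relation from the $LU$ factorization, $\int B(t)\d\mu(t)P(t)=\bigl(\int B(t)\d\mu(t)X_{[p]}^{\top}(t)\bigr)\mathbf{P}=H\bar S^{-\top}\mathbf{P}$, so summing the biorthogonal expansion column-wise yields $\int K^{[n]}(x,t)\d\mu(t)P(t)=\sum_{i=0}^{n}A_i(x)\,(H\bar S^{-\top}\mathbf{P})_i$.

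The remaining step is a truncation argument. Since $\mathbf{P}$ has all its rows beyond index $Np+p-1$ equal to zero and $\bar S^{-\top}$ is upper unitriangular, row $i$ of $\bar S^{-\top}\mathbf{P}$ vanishes as soon as $i\geq Np+p$; multiplication by the diagonal $H$ does not alter this. Therefore, whenever $n\geq Np+p-1$, the infinite sum $\sum_{i\geq 0}A_i(x)\,(H\bar S^{-\top}\mathbf{P})_i$ collapses to the first $n+1$ terms, and the two sides of \eqref{Eq ProyectorK} coincide.

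The only delicate step I anticipate is the index bookkeeping: making sure that the $p$-block structure of $X_{[p]}$, the strict upper-unitriangular shape of $\bar S^{-\top}$, and the maximum degree $N$ of $P$ combine to give precisely the cutoff $n\geq Np+p-1$ and not $n\geq (N+1)p$ or $n\geq Np$. Once this counting is done carefully, the rest is a direct consequence of the Gauss--Borel factorization and the biorthogonality already recorded above.
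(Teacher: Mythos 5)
Your argument is correct, and it is essentially the paper's intended proof: the source omits an explicit proof of this Proposition, but the proof of Corollary \ref{Col: ProycK_2} refers to exactly this mechanism — the semi-infinite coefficient column of $P$ vanishing beyond scalar row $Np+p-1$, so that the upper-unitriangular $\bar S^{-\top}$ (and the diagonal $H$) preserve that support and the truncation $\Pi_n$ is harmless for $n\geq Np+p-1$. Your index bookkeeping is right, and (as the Corollary confirms) monicity of $P_N$ is not actually needed for this bound, only the degree count.
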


Christoffel–Darboux formulas of the type described here were discussed in \cite{Evi_Arno,AM,afm} for mixed-type multiple orthogonality. For another type of Christoffel–Darboux formulas for mixed-type multiple orthogonality, see \cite{aim}.

            \subsubsection{Cauchy Transforms}
Another crucial set of objects in this construction are the Cauchy transforms of the matrix polynomials under consideration.
\begin{Definition}
	The Cauchy transforms of the orthogonal polynomials $A(x)$ and $B(x)$ are defined as follows:
	\[    \begin{aligned}
		C(z) & \coloneq \int_\Delta \frac{\d \mu(x)}{z-x}A(x), & D(z) & \coloneq \int_\Delta B(x) \frac{\d \mu(x)}{z-x}.
	\end{aligned}\]
	
\end{Definition}
\begin{Remark}
	Note that $C(z)$ and $D(z)$ take values in $\C^{q\times \infty}$ and $\C^{\infty\times p}$, respectively. 
\end{Remark}

\begin{Lemma}
	The entries of the Cauchy transform matrices are holomorphic in  $\C\setminus \Delta$.
\end{Lemma}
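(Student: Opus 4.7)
The plan is to verify that each scalar entry of $C(z)$ and $D(z)$ is well defined on $\C\setminus\Delta$ and complex differentiable there. A generic entry of $C(z)$ has the form
\[
C_{b,n}(z)=\sum_{a=1}^{p}\int_{\Delta}\frac{A_{n}^{(a)}(x)}{z-x}\,\d\mu_{b,a}(x),
\]
and similarly for $D_{n,a}(z)$. Since $A_{n}^{(a)}$ and $B_{n}^{(b)}$ are polynomials and the moment matrix $\mathscr{M}$ has been assumed to exist, every measure $\mu_{b,a}$ possesses finite absolute moments of all orders.

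First I would fix $z_{0}\in\C\setminus\Delta$ and set $d\coloneq\operatorname{dist}(z_{0},\Delta)>0$. On the open neighbourhood $U=\{z:|z-z_{0}|<d/2\}$, one has $|z-x|\geq d/2$ uniformly in $x\in\Delta$, so
\[
\left|\frac{A_{n}^{(a)}(x)}{z-x}\right|\leq \frac{2}{d}\,|A_{n}^{(a)}(x)|,
\]
which is $|\mu_{b,a}|$-integrable by finiteness of the moments. Hence the integral converges absolutely and defines a bounded function on $U$.

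Next I would apply Morera's theorem. For an arbitrary closed triangle $\gamma\subset U$, the uniform bound above together with the finite total variation $\int_{\Delta}|A_{n}^{(a)}(x)|\,\d|\mu_{b,a}|(x)<\infty$ justifies Fubini's theorem, yielding
\[
\oint_{\gamma}\left(\int_{\Delta}\frac{A_{n}^{(a)}(x)}{z-x}\,\d\mu_{b,a}(x)\right)\d z=\int_{\Delta}A_{n}^{(a)}(x)\left(\oint_{\gamma}\frac{\d z}{z-x}\right)\d\mu_{b,a}(x)=0,
\]
because for each fixed $x\in\Delta$ the map $z\mapsto 1/(z-x)$ is holomorphic on $\C\setminus\{x\}\supset U$, so its contour integral vanishes by Cauchy's theorem. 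Morera's theorem then gives holomorphy of $C_{b,n}$ on $U$, and since $z_{0}\in\C\setminus\Delta$ was arbitrary, on all of $\C\setminus\Delta$. The argument for $D_{n,a}(z)$ is identical, with $B_{n}^{(b)}(x)$ in place of $A_{n}^{(a)}(x)$.

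No serious obstacle is anticipated: the only delicate point is ensuring integrability of the polynomial factors against possibly unbounded measures, which follows from the tacit finite-moment hypothesis built into the construction of $\mathscr{M}$. Alternatively, one could expand $1/(z-x)$ in a geometric series about $z_{0}$ with ratio $|z_{0}-z|/|z_{0}-x|\leq 1/2$ and interchange summation and integration by the same dominated-convergence estimate, producing an explicit power-series representation of each entry around $z_{0}$ and thereby making the analyticity manifest.
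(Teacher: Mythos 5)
The paper states this lemma without any proof, treating it as a standard fact, so there is no argument of the authors' to compare against; your write-up supplies the missing justification and it is essentially correct. The route you take (local uniform bound $|z-x|\ge d/2$, absolute integrability of polynomial entries against the measures, Fubini plus Morera, or alternatively the geometric-series expansion about $z_{0}$) is the standard one and works here. Two small points are worth tightening. First, Morera's theorem requires \emph{continuity} of the function on $U$, not merely boundedness; continuity follows at once from dominated convergence using the same majorant $\frac{2}{d}\,|A_{n}^{(a)}(x)|$, so you should state that explicitly rather than only "bounded". Second, the step $d=\operatorname{dist}(z_{0},\Delta)>0$ implicitly uses that $\Delta$ is closed; since $\Delta$ is a finite union of supports of measures (which are closed by definition), this is fine, but it deserves a word, especially because the paper only says each $\Delta_{a,b}$ is "an interval". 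Your remark that integrability of the polynomial factors rests on the finiteness of all (absolute) moments, which is tacit in the assumption that the moment matrix $\mathscr{M}$ exists, is exactly the right caveat and is consistent with how the paper uses these objects elsewhere (e.g.\ in the series expansion of $C(z)$ for large $|z|$ in the following Proposition).
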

\begin{Proposition}
	$C(z)$ and $D(z)$ can also be expressed in terms of the Gauss--Borel matrices, $S$ and $\Bar{S}$:
	\[ \begin{aligned}
		C(z) & = \frac{1}{z} X_{[q]}^\top(z^{-1}) S^{-1}, & D(z) & = \frac{1}{z} H \Bar{S}^{-\top} X_{[p]}(z^{-1}),
	\end{aligned}\]
	whenever $\abs{z} > \mathrm{sup}\{ \abs{x}: x \in \Delta \}$.
\end{Proposition}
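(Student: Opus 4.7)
The plan is to expand the Cauchy kernel as a geometric series valid in the regime $|z|>\sup\{|x|:x\in\Delta\}$ and then recognize the resulting integrals as the orthogonality relations already read off from the $LU$ factorization. Under the stated hypothesis, the expansion
\[
\frac{1}{z-x}=\frac{1}{z}\sum_{k=0}^{\infty}\left(\frac{x}{z}\right)^k
\]
converges uniformly on $\Delta$, so termwise integration is permitted.

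The cleanest way to organize the computation is through the two block-valued resolvent identities
\[
\frac{1}{z-x}\,I_q=\frac{1}{z}\,X_{[q]}^\top(z^{-1})\,X_{[q]}(x),\qquad \frac{1}{z-x}\,I_p=\frac{1}{z}\,X_{[p]}^\top(x)\,X_{[p]}(z^{-1}),
\]
which are immediate consequences of the geometric series above. For $C(z)$ I insert the first identity into $\int_\Delta \tfrac{\d\mu(x)}{z-x}A(x)$, pull the purely $z$-dependent factor $X_{[q]}^\top(z^{-1})$ out of the integral on the left, and invoke the previously established orthogonality $\int_\Delta X_{[q]}(x)\,\d\mu(x)\,A(x)=S^{-1}$. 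For $D(z)$ I use the second identity, pull $X_{[p]}(z^{-1})$ out of the integral on the right, and apply $\int_\Delta B(x)\,\d\mu(x)\,X_{[p]}^\top(x)=H\bar{S}^{-\top}$. These two steps produce the two stated formulas.

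The only subtlety is justifying the interchange of the summation implicit in $X_{[q]}^\top(z^{-1})$ (resp.\ $X_{[p]}(z^{-1})$) with the infinite sums hidden in the multiplications against $S^{-1}$ and $H\bar{S}^{-\top}$, and against the polynomial entries of $A(x)$ and $B(x)$. Since $S$ and $\bar{S}$ are lower unitriangular, each row of $S^{-1}$ and each column of $\bar{S}^{-\top}$ has finitely many nonzero entries, and the factors $|x/z|^k$ decay uniformly in $x\in\Delta$ at a geometric rate; the double series therefore converge absolutely and the rearrangement is unproblematic. Consequently, no genuine obstacle appears: the proposition reduces to bookkeeping with the geometric series combined with the orthogonality relations extracted in the preceding subsection.
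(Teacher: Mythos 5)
Your proposal is correct and follows essentially the same route as the paper: expand the Cauchy kernel as a geometric series for $\abs{z}>\sup\{\abs{x}:x\in\Delta\}$, recognize $\tfrac{1}{z}X_{[q]}^\top(z^{-1})X_{[q]}(x)$ (resp.\ the $p$-analogue), and invoke the relations $\int_\Delta X_{[q]}(x)\,\d\mu(x)A(x)=S^{-1}$ and $\int_\Delta B(x)\,\d\mu(x)X_{[p]}^\top(x)=H\bar S^{-\top}$ coming from the $LU$ factorization. Your extra care about the convergence of the rearranged series is a welcome but inessential addition.
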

\begin{proof}
	We have
	\begin{align*}
		C(z) & = \int_\Delta \frac{\d \mu(x)}{z-x}A(x) = \frac{1}{z} \int_\Delta \frac{\d \mu(x)}{1-\frac{x}{z}}A(x) = \frac{1}{z}\int_\Delta \sum_{i=0}^\infty \left( \frac{x}{z} \right)^i \d \mu(x) A(x) \\[3pt]
		& = \frac{1}{z} X_{[q]}^\top(z^{-1})\int_\Delta X_{[q]}(x) \d \mu(x) X^\top_{[p]}(x) \Bar{S}^\top H^{-1} = \frac{1}{z} X_{[q]}(z^{-1}) S^{-1}.
	\end{align*}
	The expansion of $(1-\frac{x}{z})^{-1}$ in a power series is valid since $\abs{z} > \mathrm{sup}\{ \abs{x}: x \in \Delta \}$. A similar proof can be given for $D(z)$. 
\end{proof}
Let us now introduce two additional families of CD kernels.
\begin{Definition}
	The mixed-type Christoffel--Darboux kernels are defined as follows:
	\begin{align*}
		K^{[n]}_{C}(x,y) & \coloneq C^{[n]}(x)B^{[n]}(y) = \int_\Delta \frac{\d \mu(t)}{x-t}K^{[n]}(t,y), \\
		K^{[n]}_{D}(x,y) & \coloneq A^{[n]}(x)D^{[n]}(y) = \int_\Delta K^{[n]}(x,t)\frac{\d \mu(t)}{y-t}.
	\end{align*}
\end{Definition}

\begin{Remark}
Note that $	K^{[n]}_{C}(x,y) $ and $	K^{[n]}_{D}(x,y) $ take values in $\C^{q\times q}$ and $\C^{p\times p}$, respectively.
\end{Remark}

        \subsection{Canonical Set of Jordan Chains and Divisibility for Matrix Polynomials} \label{S:CSoJC}

Building on \cite{MatrixPoly}, we explore essential results concerning matrix polynomials, crucial for our further study. We focus on regular matrix polynomials; that is,

\begin{equation}\label{eq:R(x)}
	\begin{aligned}
	R(x)& = \sum_{l=0}^{N^R }R_l x^l, & R_l &\in \mathbb{C}^{p \times p},
\end{aligned}
\end{equation}
where \(\det R(x)\) is not identically zero. We have that \(\deg\det R(x)\in\{0,1,\dots,N_Rp\}\), and if $R_{N^R}$ is no singular it follows that  \(\deg\det R(x)=N^Rp\). However, in this paper we  use matrix polynomials with singular leading coefficiens, which  have determinants with lower degree than $Np$ Let us introduce a nonnegative integer  $r$ to model this, so that
\[
\begin{aligned}
	\deg \det R(x) &= N^Rp - r, & r \in & \{0,\dots,Np-1\}.
\end{aligned}
\]
The eigenvalues of \(R(x)\) are the zeros of \(\det R(x)\).

\begin{Proposition}[Smith Form]\label{SmithForm}
	Any matrix polynomial can be represented in its Smith form as:
	\[
	R(x) = E(x)D(x)F(x),
	\]
	where \( E(x) \) and \( F(x) \) are matrices with constant determinants, and \( D(x) \) is a diagonal matrix polynomial. Explicitly, \( D(x) \) takes the form:
	\[
	D(x) = \diag \left(
	\prod_{i=1}^{M^R}(x-\rho_i)^{\kappa_{i,1}}, \prod_{i=1}^{M^R}(x-\rho_i)^{\kappa_{i,2}}, \dots , \prod_{i=1}^{M^R}(x-x_i)^{\kappa_{i,p}}\right),
	\]
	where he eigenvalues \( \rho_i \) are the roots of \( \det R(x) \), and \( \kappa_{i,j} \) are the partial multiplicities. For a root \( \rho_i \) with multiplicity \( K_i \), the following relations hold:
	\[
	\begin{aligned}
		K_i &= \sum_{j=1}^{p} \kappa_{i,j}, & \sum_{i=1}^{M^R}\sum_{j=1}^p \kappa_{i,j} &= N^Rp - r,
	\end{aligned}
	\]
	with \( M^R\) being the number of distinct roots and some partial multiplicities potentially zero.
\end{Proposition}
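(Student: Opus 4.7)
The plan is to invoke the classical Smith normal form construction for matrix polynomials over the Euclidean domain $\mathbb{C}[x]$. Since $\mathbb{C}[x]$ is a principal ideal domain admitting a division algorithm, every matrix polynomial $R(x)$ can be reduced to a diagonal form by elementary row and column operations: swaps, multiplication of a row or column by a nonzero scalar, and addition of a polynomial multiple of one row (or column) to another. Each such operation corresponds to left or right multiplication by a unimodular matrix polynomial, that is, one whose determinant is a nonzero constant; accumulating these operations yields the required factors $E(x)$ and $F(x)$.

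The first step is to locate an entry of $R(x)$ of minimal polynomial degree among all nonzero entries and permute rows and columns to move it into position $(1,1)$. Euclidean division by this pivot is then used to try to clear the remaining entries of the first row and the first column; whenever a remainder of strictly smaller degree appears, it replaces the current pivot and the clearing procedure restarts. Since degrees are nonnegative integers and the pivot degree strictly decreases whenever a replacement occurs, this process terminates, leaving $R(x)$ transformed into a block-diagonal arrangement with a scalar polynomial $d_1(x)$ in the top-left corner and a smaller matrix polynomial $R'(x)$ of size $(p-1)\times(p-1)$ below. Recursion on $R'(x)$ then produces a fully diagonal matrix $\diag(d_1(x),\dots,d_p(x))$ together with the unimodular $E(x)$ and $F(x)$.

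Because $\mathbb{C}$ is algebraically closed, each invariant factor $d_j(x)$ splits completely into linear factors; grouping these over the distinct roots $\rho_i$ of $\det R(x)$ gives
\[
d_j(x) = \gamma_j \prod_{i=1}^{M^R}(x-\rho_i)^{\kappa_{i,j}},
\]
with constants $\gamma_j\in\mathbb{C}^\times$ that can be absorbed into $E(x)$ or $F(x)$, producing exactly the diagonal $D(x)$ stated in the proposition. Taking determinants in $R(x)=E(x)D(x)F(x)$ and using that $\det E(x)$ and $\det F(x)$ are nonzero constants yields
\[
\det R(x) = c \prod_{i=1}^{M^R}(x-\rho_i)^{\sum_{j=1}^{p}\kappa_{i,j}},
\]
whence $K_i=\sum_{j=1}^p\kappa_{i,j}$ and, by comparison with the hypothesis $\deg\det R(x)=N^Rp-r$, the identity $\sum_{i,j}\kappa_{i,j}=N^Rp-r$ follows at once.

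The statement is essentially a transcription of a standard theorem from \cite{MatrixPoly}, so no genuinely new obstacle appears; the only delicate point is the careful bookkeeping of the unimodular factors collected at each elementary operation and the justification that the iterative clearing of the first row and column terminates, which rests entirely on the Euclidean property of $\mathbb{C}[x]$ and the well-ordering of $\mathbb{N}_0$. Uniqueness of the partial multiplicities $\kappa_{i,j}$ is not claimed in the proposition and can therefore be set aside, although it would follow in the usual way by identifying the invariant factors with quotients of determinantal divisors of $R(x)$.
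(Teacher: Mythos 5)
The paper does not actually prove this proposition: it is quoted as a known result from Gohberg--Lancaster--Rodman (\cite{MatrixPoly}), so there is no internal argument to compare yours against. Your proof is the standard constructive derivation of the Smith form over the Euclidean domain $\C[x]$ — pivoting on a minimal-degree entry, clearing the first row and column by polynomial division with a strictly decreasing pivot degree to guarantee termination, recursing on the trailing block, splitting the resulting invariant factors into linear factors over $\C$, and reading off $K_i=\sum_j\kappa_{i,j}$ and $\sum_{i,j}\kappa_{i,j}=N^Rp-r$ from the determinant — and it is correct for the statement as literally written. One small remark: the term ``partial multiplicities'' conventionally refers to the exponents appearing in the invariant factors subject to the divisibility chain $d_1\mid d_2\mid\cdots\mid d_p$ (equivalently, $\kappa_{i,1}\le\cdots\le\kappa_{i,p}$ for each $i$), and the later material on canonical sets of Jordan chains implicitly uses the $\kappa_{i,j}$ in that normalized sense. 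Your row/column clearing alone produces a diagonal form but not automatically the divisibility chain; to get it one adds the standard extra step (if some entry of the trailing block is not divisible by the current pivot, add its row to the first row and restart the reduction, which again terminates by the degree argument). Since the proposition does not assert the chain and you explicitly set uniqueness aside, this is a presentational gap rather than a mathematical one, but it is worth closing if the $\kappa_{i,j}$ are to match the partial multiplicities used in the rest of the paper.
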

To simplify the notation, we consider a single eigenvalue \( \rho_0 \) with multiplicity \( K \).

\begin{Definition}[Jordan Chains]
	\begin{enumerate}
		\item A left Jordan chain for \( R(x) \) at \( \rho_0 \in \mathbb{C} \) consists of \( \ell+1 \) row vectors $\{\cev{\boldsymbol w}_0,\cev{\boldsymbol w}_1,\dots,\cev{\boldsymbol w}_\ell\}\subset(\C^p)^*$ satisfying:
		\[
	\begin{aligned}
			\sum_{k=0}^i \frac{1}{k!}\cev{\boldsymbol{w}}_{i-k}R^{(k)}(\rho_0) &= 0, & i &\in \{0, \dots ,\ell\}.
	\end{aligned}
		\]
			\item A right Jordan chain for \( R(x) \) at \( \rho_0 \in \mathbb{C} \) consists of \( \ell+1 \) column vectors $\{\vec{\boldsymbol v}_0,\vec{\boldsymbol v}_1,\dots,\vec{\boldsymbol v}_\ell\}\subset\C^p$ satisfying:
		\[
		\begin{aligned}
			\sum_{k=0}^i \frac{1}{k!}R^{(k)}(x_0) \vec{\boldsymbol{v}}_{i-k}&= 0, & i &\in \{0, \dots ,\ell\}.
		\end{aligned}
		\]
		\item A canonical set of left Jordan chains of \( R(x) \) at \( \rho_0 \) consists of \( K \) vectors structured as:
		\[
		\{ \cev{\boldsymbol{w}}_{1,0}, \cev{\boldsymbol{w}}_{1,1}, \dots, \cev{\boldsymbol{w}}_{1,\kappa_1-1}, \dots, \cev{\boldsymbol{w}}_{s,0}, \dots, \cev{\boldsymbol{w}}_{s,\kappa_s-1} \},
		\]
		where \( s \leq p \) and \( \sum_{i=1}^s \kappa_i = K \). Each subset \( \{ \cev{\boldsymbol{w}}_{i,0}, \dots, \cev{\boldsymbol{w}}_{i,\kappa_i-1} \} \) forms a left Jordan chain of length \( \kappa_i \), with the row vectors \(\cev {\boldsymbol{w}}_{i,0} \) being linearly independent. Similarly, one has  canonical set of right Jordan chains 
			\[
		\{ \vec{\boldsymbol{v}}_{1,0}, \vec{\boldsymbol{v}}_{1,1}, \dots, \vec{\boldsymbol{v}}_{1,\kappa_1-1}, \dots, \vec{\boldsymbol{v}}_{s,0}, \dots, \vec{\boldsymbol{v}}_{s,\kappa_s-1} \},
		\]
	\end{enumerate}
\end{Definition}
Lastly, let's introduce a theorem concerning the divisibility of matrix polynomials, see
	\cite[Corollary 7.11., pag. 203]{MatrixPoly}.
	\begin{Theorem}[Matrix Polynomials Divisibility]\label{Theorem}
Let us consider two regular $p \times p$ matrix polynomials $R(x)$ and $A(x)$. Then, $R(x)$ is a right/left divisor of $A(x)$ if and only if each Jordan chain of $R(x)$ coincides with a Jordan chain of $A(x)$ having the same eigenvalue.
	\end{Theorem}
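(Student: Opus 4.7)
The plan is to reduce the claim to a matrix Bezout/remainder argument, treating right divisibility in detail and obtaining the left version by transposition: if $A(x)=R(x)Q(x)$ then $A(x)^{\intercal}=Q(x)^{\intercal}R(x)^{\intercal}$, and left Jordan chains of $R(x)$ correspond to right Jordan chains of $R(x)^{\intercal}$, so the forward and converse statements for left divisors follow from their right counterparts.

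Forward direction. Suppose $A(x)=Q(x)R(x)$ and let $\{\vec{\boldsymbol v}_0,\dots,\vec{\boldsymbol v}_\ell\}$ be a right Jordan chain of $R$ at $\rho_0$. Applying the Leibniz rule $\frac{1}{k!}A^{(k)}(\rho_0)=\sum_{j=0}^{k}\frac{1}{j!(k-j)!}Q^{(j)}(\rho_0)R^{(k-j)}(\rho_0)$ and reordering the double sum via the change of index $m=k-j$ produces
\[
\sum_{k=0}^{i}\frac{1}{k!}A^{(k)}(\rho_0)\vec{\boldsymbol v}_{i-k}=\sum_{j=0}^{i}\frac{1}{j!}Q^{(j)}(\rho_0)\Bigl[\sum_{m=0}^{i-j}\frac{1}{m!}R^{(m)}(\rho_0)\vec{\boldsymbol v}_{i-j-m}\Bigr]=0,
\]
because the bracketed inner sum vanishes for every $i-j\in\{0,\dots,\ell\}$ by the Jordan chain condition for $R$. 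A symmetric calculation starting from $A=RQ$ transfers left Jordan chains.

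Converse direction. Assume that each Jordan chain in a canonical set of $R$ at every eigenvalue is also a Jordan chain of $A$. When the leading coefficient $R_{N^R}$ is invertible, classical right polynomial division yields
\[
A(x)=Q(x)R(x)+S(x),\qquad\deg S(x)<N^R,
\]
and by the forward direction the canonical Jordan chains of $R$ annihilate $Q(x)R(x)$ in the chain sense; by hypothesis they annihilate $A(x)$, so they must annihilate $S(x)$ as well. The target is then to show that a matrix polynomial of degree less than $N^R$ whose canonical Jordan chain data at the spectrum of $R$ is trivial must vanish identically. This is an interpolation statement: the Jordan chains of $R$ produce, via the block Vandermonde/Jordan matrix built on a canonical set, a full-rank linear system on the coefficients of $S$, forcing $S\equiv 0$ and hence $A=QR$.

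The main obstacle is the general case where $R_{N^R}$ is singular and $\deg\det R(x)=N^Rp-r$ with $r>0$, since the degree bound from right division no longer matches the spectral data produced by the Jordan chains of $R$. The plan here is to invoke the Smith factorization from Proposition \ref{SmithForm}, $R(x)=E(x)D(x)F(x)$ with $E(x)$ and $F(x)$ of constant nonzero determinant, reducing the divisibility question to the diagonal matrix polynomial $D(x)$. For $D(x)$ the problem decouples into scalar divisibilities along the diagonal, and the partial multiplicities $\kappa_{i,j}$ appearing in the Smith form correspond precisely to the lengths of the canonical Jordan chains; the classical equivalence between scalar divisibility and coincidence of roots with multiplicities then closes the argument. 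The bookkeeping for this last step, via standard pairs and triples, is technical and is the reason why the full proof is taken from \cite[Corollary 7.11, p.~203]{MatrixPoly}.
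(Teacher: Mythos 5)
The paper does not prove this statement at all: it is imported verbatim from Gohberg--Lancaster--Rodman \cite[Corollary 7.11]{MatrixPoly}, so there is no in-paper argument to measure yours against. Judged on its own terms, your forward direction is correct and complete: the Leibniz expansion of $\frac{1}{k!}A^{(k)}(\rho_0)$ together with the reindexing $m=k-j$ is exactly the right computation, and the observation that $i-j\le\ell$ whenever $0\le j\le i\le\ell$ makes every inner bracket vanish by the chain condition for $R$.

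The converse, however, is an outline rather than a proof, and the two places where you defer are precisely where the content of the theorem lives. First, the claim that the chain conditions on a remainder $S$ with $\deg S<N^R$ form a full-rank linear system is equivalent to the invertibility of the block matrix $\operatorname{col}(XJ^i)_{i=0}^{N^R-1}$ built on a canonical Jordan pair $(X,J)$ of a monic matrix polynomial; this is a genuine theorem (completeness of the spectral data), not an interpolation triviality, and you neither prove it nor reduce it to anything you do prove. Second, in the singular-leading-coefficient case the Smith-form reduction still requires you to (a) show that right Jordan chains of $R=EDF$ are in bijection with those of $D$ via the transformation $w_i=\sum_{k}\frac{1}{k!}F^{(k)}(\rho)v_{i-k}$ (using that $E$ and $F$ are unimodular), and (b) check that the same transformation carries the hypothesis ``chain of $A$'' into ``chain of $AF^{-1}$'', so that the columnwise scalar divisibility argument applies to $AF^{-1}$ and not to $A$ itself; the sentence ``the problem decouples into scalar divisibilities along the diagonal'' skips both steps. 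Since you close by saying the full proof is taken from \cite[Corollary 7.11]{MatrixPoly}, you have in effect reproduced the paper's own treatment --- citation of the result --- preceded by a correct proof of the easy half and a roadmap, with the two hard lemmas left as assertions, of the other half.
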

	\subsection{The Matrix Structure of the Polynomial Perturbation}\label{S:Matrix_structure_Uvarov}
Let's  consider  matrix polynomials  as in \eqref{eq:R(x)}
with  leading and sub-leading matrix coefficients of the form:
\begin{align} \tag{C1}\label{Cond: LeadingMatrixConditions}
	R_{N^R}  & = \begin{bNiceArray}{cw{c}{1cm}c||w{c}{2cm}c}
		\Block{3-2}{0_{(p-r)\times r} }	&	& \Block{3-3}{\left[ t_{N^R} \right]_{(p-r)\times (p-r)}} &&\\\\\\
		\Block{2-2}{	0_{r\times r} }& &\Block{2-3}{0_{r \times (p-r)}} &&\\\\
	\end{bNiceArray}, & R_{N^R-1}  & = \begin{bmatrix}
		\left[ R^1_{N^R-1} \right]_{(p-r)\times r} & \left[ R^2_{N^R-1} \right]_{(p-r)\times (p-r)} \\ \\
		\left[ t_{N^R-1} \right]_{r\times r} & \left[ R^4_{N^R-1} \right]_{r \times (p-r)}
	\end{bmatrix},
\end{align}
where $r$  take values in $\{0,\dots,p-1\}$, and $\left[ t_{N^R} \right]_{(p-r)\times (p-r)}$ and $\left[ t_{N^R-1} \right]_{r\times r}$ are upper triangular matrices with nonzero determinant.

From this point onward, we will consider matrix polynomials in which the leading and sub-leading matrices, $\left[ t_{N^R} \right]_{(p-r)\times (p-r)}$ and $\left[ t_{N^R-1} \right]_{r\times r}$, are chosen to be the identity matrix:
\begin{align} \tag{C2.1}\label{Cond: CondicionesMatricesLideresFinales_1}
	R_{N^R}  & = \begin{bmatrix}
		0_{(p-r)\times r} & I_{(p-r)\times (p-r)} \\ \\
		0_{r\times r} & 0_{r \times (p-r)}
	\end{bmatrix}, & R_{N^R-1}  & = \begin{bmatrix}
		\left[ R^1_{N^R-1} \right]_{(p-r)\times r} & \left[ R^2_{N^R-1} \right]_{(p-r)\times (p-r)} \\ \\
		I_{r\times r} & \left[ R^4_{N^R-1} \right]_{r \times (p-r)}
	\end{bmatrix}.
\end{align}
It is straightforward to observe that any matrix polynomial of the form in condition \eqref{Cond: LeadingMatrixConditions} can be expressed as the product of $R(x)$ and another upper triangular matrix with a nonzero determinant. While we will focus on perturbations where the leading matrices satisfy the condition \eqref{Cond: CondicionesMatricesLideresFinales_1}, multiplying the weight matrix by a matrix with a nonzero determinant will preserve orthogonality, and the newly perturbed polynomials will be linear combinations of the original ones.

\begin{Proposition} \label{Prop2}
The determinant of a matrix polynomial, where the leading and sub-leading matrices satisfy the conditions in \eqref{Cond: CondicionesMatricesLideresFinales_1}, is a polynomial of degree $N^Rp - r$.
\end{Proposition}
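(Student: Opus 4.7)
The plan is to analyze the column degrees of $R(x)$ imposed by \eqref{Cond: CondicionesMatricesLideresFinales_1}, obtain the bound $\deg \det R(x) \le N^R p - r$ by a Leibniz-type argument, and then verify that the coefficient of $x^{N^R p - r}$ is a nonzero constant by reducing it to an explicit block determinant of known form.

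First I would split the columns of $R(x)$ into two groups of sizes $r$ and $p-r$ according to the block decomposition of \eqref{Cond: CondicionesMatricesLideresFinales_1}. Since the first $r$ columns of $R_{N^R}$ vanish, each of the first $r$ columns of $R(x)$ has degree at most $N^R - 1$; and since the first $r$ columns of $R_{N^R-1}$ consist of $R^1_{N^R-1}$ stacked above $I_r$ and are therefore nonzero, this degree is in fact exactly $N^R - 1$. Dually, the last $p-r$ columns of $R_{N^R}$ consist of $I_{p-r}$ stacked above the zero block, so the last $p-r$ columns of $R(x)$ have degree exactly $N^R$. Writing $d_i$ for the degree of the $i$-th column, one has
\[
\sum_{i=1}^p d_i = r(N^R - 1) + (p-r)N^R = N^R p - r,
\]
and the Leibniz expansion of $\det R(x)$ immediately yields $\deg\det R(x) \leq N^R p - r$.

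To promote this bound to an equality, I would extract the coefficient of $x^{N^R p - r}$ in $\det R(x)$. By the standard fact that this coefficient equals the determinant of the constant matrix $\tilde R$ whose $i$-th column is the $x^{d_i}$-coefficient of the $i$-th column of $R(x)$, the conditions \eqref{Cond: CondicionesMatricesLideresFinales_1} give the explicit form
\[
\tilde R = \begin{bmatrix} R^1_{N^R-1} & I_{p-r} \\ I_r & 0_{r\times(p-r)} \end{bmatrix},
\]
and swapping the two block-columns followed by a triangular evaluation yields $\det \tilde R = (-1)^{r(p-r)} \neq 0$. Combined with the upper bound, this produces $\deg\det R(x) = N^R p - r$, as claimed.

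No substantial obstacle is anticipated: the proof is essentially careful bookkeeping. The delicate point worth underlining is the dual structural role of the conditions \eqref{Cond: CondicionesMatricesLideresFinales_1}. The identity block in $R_{N^R}$ supplies the highest-degree contributions from the last $p-r$ columns, while the identity block in the \emph{sub-leading} coefficient $R_{N^R-1}$ simultaneously prevents the first $r$ columns from dropping below degree $N^R - 1$ and, through the block-triangular reduction of $\tilde R$, guarantees the nonvanishing of $\det\tilde R$ regardless of the otherwise free block $R^1_{N^R-1}$.
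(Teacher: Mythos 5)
Your proof is correct, and it reaches the same destination as the paper's by a noticeably cleaner route. The paper expands $\det R(x)$ via the Leibniz formula, exhibits the single permutation $\tilde\sigma$ ($i\mapsto r+i$ for $i\le p-r$, $i\mapsto i-(p-r)$ otherwise) that threads through the two identity blocks of $R_{N^R}$ and $R_{N^R-1}$ to produce $x^{N^Rp-r}$, and then simply asserts that every other permutation contributes zero or lower degree. Your version makes that last assertion rigorous: the column-degree count $\sum_i d_i = r(N^R-1)+(p-r)N^R = N^Rp-r$ gives the upper bound for \emph{all} permutations at once, and the reduction of the top coefficient to the block determinant
\[
\det\begin{bNiceMatrix} R^1_{N^R-1} & I_{p-r} \\ I_r & 0_{r\times(p-r)} \end{bNiceMatrix} = (-1)^{r(p-r)}
\]
identifies the leading coefficient exactly (the paper's displayed computation quietly drops the sign $\sgn(\tilde\sigma)=(-1)^{r(p-r)}$, which is harmless for the degree claim but which your argument gets right). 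The two proofs are the same in substance — both hinge on the identity block of $R_{N^R}$ carrying the last $p-r$ columns to degree $N^R$ and the identity block of $R_{N^R-1}$ holding the first $r$ columns at degree exactly $N^R-1$ — but your packaging via the leading-coefficient matrix $\tilde R$ buys a complete justification of the degree bound and an explicit nonzero leading coefficient, at no extra cost.
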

\begin{proof}
Expanding the determinant we get
	\begin{align*}
		\det  R(x) & =  \begin{vmatrix}
			\left[ R^1_{N^R-1} \right]x^{N^R-1}+ \text{l.o.t.} & x^{N^R}I_{(p-r)} + \text{l.o.t.}\\
			\\
			x^{N^R-1}I_{r} +  \text{l.o.t.}& \left[ R^3_{N^R-1} \right]x^{N^R-1}+ \text{l.o.t}
		\end{vmatrix} \\
		& = \sum_{\sigma \in \mathcal{S}_p} \text{sgn}(\sigma) R_{1\sigma(1)}R_{2\sigma(2)} \cdots R_{p\sigma(p)} \\&
			= \text{sgn}(\Tilde{\sigma}) R_{1\Tilde{\sigma}(1)}R_{2\Tilde{\sigma}(2)} \cdots R_{(p-r)\Tilde{\sigma}(p-r)} \cdots R_{p\Tilde{\sigma}(p)} 
			+ \sum_{\sigma \neq \Tilde{\sigma}} \text{sgn}(\sigma) R_{1\sigma(1)}R_{2\sigma(2)} \cdots R_{p\sigma(p)},
	\end{align*}
	where the permutation $\Tilde{\sigma}(i)$ is such that $i \rightarrow r+i$ for $i \leq (p-r)$ and $i \rightarrow i-(p-r)$ for $i \geq (p-r)+1$. Here l.o..t  stands for lower order degree terms. This term in the determinant expansion leads to a contribution of the form:
	\begin{multline*}
		\sgn(\Tilde{\sigma}) R_{1\Tilde{\sigma}(1)}R_{2\Tilde{\sigma}(2)} \cdots R_{(p-r)\Tilde{\sigma}(p-r)} \cdots R_{p\Tilde{\sigma}(p)} \\\begin{aligned}
			&= \sgn(\Tilde{\sigma}) R_{1,(r+1)}R_{2,(r+2)} \cdots R_{(p-r),p} R_{(p-r+1),1} \cdots R_{pr} =
			\sgn(\Tilde{\sigma}) (x^{N^R})^{(p-r)}(x^{N^R-1})^{r} = x^{N^Rp-r}.
		\end{aligned}
	\end{multline*} 	
	Any other permutation either results in zero or produces terms of lower degree.
\end{proof}
\begin{Proposition}
$R\left( \Lambda_{[p]}^\top \right)$ is banded lower triangular matrix that from the $N^Rp-r$ subdiagonal is populated with zeros.
\end{Proposition}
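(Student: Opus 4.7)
The plan is to expand $R(\Lambda_{[p]}^\top) = \sum_{l=0}^{N^R} R_l (\Lambda_{[p]}^\top)^l$ and to locate, for each summand, the deepest subdiagonal of the resulting infinite matrix on which it can place a nonzero entry. Since $\Lambda_{[p]}^\top$ is the block-shift that carries a copy of $I_p$ into every block-position $(i,i-1)$, its $l$-th power does the same with shift $l$, and when paired with the block-diagonal embedding of the $p\times p$ coefficients $R_l$ one obtains a matrix whose $(i,i-l)$ block is exactly $R_l$. This already shows that $R(\Lambda_{[p]}^\top)$ is block lower triangular with block bandwidth $N^R$, in particular lower triangular in the block sense used in the statement.

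The next step is to translate the block picture into entrywise subdiagonal indices. The $(a,b)$ entry of $R_l$, with $a,b\in\{0,\dots,p-1\}$, is placed into position $(ip+a,(i-l)p+b)$, which lies on the subdiagonal of index $lp + a - b$. Hence the deepest subdiagonal that the summand $R_l(\Lambda_{[p]}^\top)^l$ can reach equals $lp + \max\{a-b : (R_l)_{a,b}\ne 0\}$, and the claim reduces to showing that this quantity never exceeds $N^Rp-r$.

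I would then verify this bound by a case analysis based on \eqref{Cond: CondicionesMatricesLideresFinales_1}. When $l\le N^R-2$ the matrix $R_l$ is arbitrary and $a-b$ is at most $p-1$, giving the generous estimate $(N^R-2)p + p - 1 = N^Rp - p - 1 \le N^Rp - r - 1$, using $r\le p-1$. When $l=N^R$ the prescribed form of $R_{N^R}$ forces $a=b-r$ on its support, so $a-b=-r$ and the depth is exactly $N^Rp-r$. Finally, when $l=N^R-1$, a direct inspection of the four blocks of $R_{N^R-1}$ shows that $\max(a-b)=p-r$ is attained precisely on the identity block $I_r$ sitting in the bottom-left $r\times r$ corner (with $a-b=p-r$ along it), while the other three blocks yield the smaller values $p-r-1$, $p-2r-1$ and $p-r-1$; this produces depth $N^Rp-r$ once more.

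The step that requires the most care is this last bookkeeping of indices, and it is the one that makes the structural hypothesis \eqref{Cond: CondicionesMatricesLideresFinales_1} essential: without the prescribed zeros of $R_{N^R}$ and the precise placement of $I_r$ in $R_{N^R-1}$, a generic matrix polynomial of degree $N^R$ would reach subdiagonal $N^Rp + p - 1$, so it is exactly the normalized leading form that produces the refined bandwidth $N^Rp-r$.
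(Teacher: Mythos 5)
Your proposal is correct and follows essentially the same route as the paper: expand $R(\Lambda_{[p]}^\top)$ into its block lower-triangular form with $R_l$ on the $l$-th block subdiagonal, and then use the prescribed zero pattern of $R_{N^R}$ and the placement of $I_r$ in $R_{N^R-1}$ from \eqref{Cond: CondicionesMatricesLideresFinales_1} to see that the deepest reachable subdiagonal has index $N^Rp-r$. Your explicit index formula $lp+a-b$ and the case analysis over $l$ make the bookkeeping more transparent than the paper's pictorial count of subdiagonals in the displayed $2p\times 2p$ block, but the underlying argument is the same.
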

\begin{proof}
We have
\begin{equation*}
	R\left( \Lambda_{[p]}^\top \right) = \left[\begin{NiceMatrix}
		R_0 & 0_p & 0_p & \Cdots \\
		R_1 & R_0 & 0_p & \Cdots \\
		\Vdots & & \Ddots & \\
		R_{N^R} & R_{N^R-1} & \Cdots & R_0 & \Cdots \\
		0_p & R_{^R}& R_{N^R-1} & \Cdots & R_0 & \Cdots\\
		\Vdots & \Ddots[shorten-end=-15pt] &  \Ddots[shorten-end=-10pt] &  \Ddots[shorten-end=-25pt] &  & \Ddots
	\end{NiceMatrix} \right].
\end{equation*}
For the block 
\[\begin{bNiceMatrix}
	R_{N^R-1} & R_{N^R-2} \\
	R_{N^R} & R_{N^R-1}  
\end{bNiceMatrix}
\]
we found the following detailed structure
\[\begin{bNiceArray}[small]{w{c}{1.35cm}w{c}{1cm}w{c}{1.5cm}w{c}{1cm}w{c}{1cm}w{c}{1cm}w{c}{1.35cm}w{c}{1cm}w{c}{1cm}w{c}{1.5cm}}[hvlines,margin,small,cell-space-limits=15pt]
	\Block{3-2}{\left[ R^1_{N^R-1} \right]_{(p-r)\times r} } 	& & \Block{3-3}<\large>{\left[ R^2_{N^R-1} \right]_{(p-r)\times (p-r)}} 	&  &&\Block{5-5}<\Huge>{R_{N^R-2}} & &&& \\
	& & &  &&& &&& \\
	& & & &&& &&& \\\cline{1-5}
\Block{2-2}<\LARGE>{I_{r} } 	& & \Block{2-3}{\left[ R^4_{N^R-1} \right]_{r \times (p-r)} } & &&& &&& \\
	& & &  &&& &&& \\
	\hline
	\Block{3-2}<\Large>{0_{(p-r)\times r} } 	& & \Block{3-3}<\huge>{ I_{(p-r)}} 	&  &&\Block{3-2}{\left[ R^1_{N^R-1} \right]_{(p-r)\times r} } & &\Block{3-3}<\large>{\left[ R^2_{N^R-1} \right]_{(p-r)\times (p-r)}} && \\
& & &  &&& &&& \\
& & & &&& &&& \\\cline{1-5}
\Block{2-2}<\LARGE>{0_{r}} 	& & \Block{2-3}<\Large>{0_{r \times (p-r)} } & &&\Block{2-2}<\LARGE>{I_r} & &\Block{2-3}{  \left[ R^4_{N^R-1} \right]_{r \times (p-r)}} && \\
& & &  &&& &&& 
\end{bNiceArray}\]
which has $p-r$ subdiagonals with at least a nonzero entry. Up to the matrix $R_{N-1}$, there will be $N-1$ matrices, which sum up to a total of $Np-r$ subdiagonals.
\end{proof}\setlength{\extrarowheight}{0mm}

\begin{Corollary} \label{Col: ProycK_2}
    If the leading matrix of a given polynomial of degree $N$ is of the form: 
    \begin{equation*}
        P_N = \begin{bNiceMatrix}
            0_{(p-r) \times r} & I_{(p-r)} \\[2pt]
            0_r & 0_{r \times (p-r)}
        \end{bNiceMatrix},
    \end{equation*}
    then the projection property shown in Proposition \ref{ProyectorK}  holds for $n \geq Np + p -1 - r$.
\begin{proof}
    It is a direct consequence of the proof of the stated Proposition, since $\Pi_n$ does not change the structure of 
    \begin{equation*}
        \begin{bNiceMatrix}
            P_0 \\ P_1 \\ \Vdots \\ P_N \\ 0_p \\ \Vdots 
        \end{bNiceMatrix},
    \end{equation*}
    provided that $n+1 \geq Np + p - r$.
\end{proof}
\end{Corollary}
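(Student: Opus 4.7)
The plan is to revisit the argument underlying Proposition~\ref{ProyectorK} and isolate exactly where the quantitative bound $n\ge Np+p-1$ enters. That proof realizes the CD-kernel action $\int K^{[n]}(x,t)\,\d\mu(t)\,\cdot$ as a truncation $\Pi_n$ acting on the block-column of coefficients
\[
\begin{bmatrix}
P_0 \\ P_1 \\ \vdots \\ P_N \\ 0_p \\ 0_p \\ \vdots
\end{bmatrix},
\]
keeping only its first $n+1$ scalar rows. By biorthogonality, $\Pi_n$ restores the full coefficient column precisely when $n+1$ exceeds the index of the last scalar row carrying a nonzero entry; this is what forces the bound $n \geq Np + p - 1$ in the monic case, where $P_N = I_p$ has every row nonzero.

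I would then read off the corresponding index for the leading coefficient prescribed in the statement, namely
\[
P_N = \begin{bmatrix}
0_{(p-r)\times r} & I_{p-r} \\
0_r & 0_{r\times(p-r)}
\end{bmatrix}.
\]
The block $P_N$ occupies scalar rows $Np,\ldots,Np+p-1$ of the coefficient column, and by construction its bottom $r$ rows vanish identically. The last scalar row with a potentially nonzero entry is therefore $Np+(p-r)-1=Np+p-r-1$. For $\Pi_n$ to preserve the whole coefficient column it then suffices that $n+1\ge Np+p-r$, i.e.\ $n\ge Np+p-1-r$, which is exactly the bound claimed in the corollary.

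The only point worth checking in detail is that earlier blocks $P_i$ with $i<N$ cannot contribute to scalar rows beyond $Np-1$, so they cannot revive the suppressed positions $Np+p-r,\ldots,Np+p-1$; this is immediate from the block-column layout, since each $P_i$ sits in rows $ip,\ldots,ip+p-1$. Consequently, no new analytic machinery is required beyond that of Proposition~\ref{ProyectorK}: the conclusion is a direct bookkeeping refinement obtained by substituting the true position of the last nonzero scalar row of the coefficient column into the threshold already established there. The expected main obstacle is purely notational, namely tracking the singular block structure of $P_N$ through the flattening to scalar indices, but it poses no genuine analytic difficulty.
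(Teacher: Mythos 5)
Your argument is correct and coincides with the paper's own proof: both reduce the claim to the observation that the truncation $\Pi_n$ leaves the coefficient column of $P$ unchanged once $n+1\ge Np+p-r$, because the bottom $r$ scalar rows of the leading block $P_N$ vanish. The paper states this in one line; you merely spell out the same bookkeeping in more detail.
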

\begin{Remark}
    Similar properties derived in this section hold if we consider the following matrix polynomial: 
    \begin{align*}
    \begin{aligned}
			L(x) & = \sum_{l=0}^{N^L} L_l x^l , & L_l \in \mathbb{C}^{p \times p},
    \end{aligned}
    \end{align*}
    whose leading matrices are
    \begin{align}  \tag{C2.2} \label{Cond: CondicionesMatricesLideresFinales_2}
	\begin{aligned}
			L_{N^L}  & = \begin{bmatrix}
			0_{r\times (q-l)} & 0_{l} \\ \\
			I_{(q-l)} & 0_{(q-l) \times l}
		\end{bmatrix}, & L_{N^L-1}  & = \begin{bmatrix}
			\left[ L^1_{N^L-1} \right]_{l\times (q-l)} & I_{l} \\ \\
			\left[ L^3_{N^L-1} \right]_{(q-l)} & \left[ L^4_{N^L-1} \right]_{(q-l) \times l}
		\end{bmatrix}.
	\end{aligned}
    \end{align}
    For this case, the determinant of the matrix $L(x)$ is of degree $N^Lq-l$.
\end{Remark}
    \section{Uvarov Perturbation}\label{S:Uvarov}
Given regular matrix polynomials $R(x)\in\C^{p\times p}(x)$  and $L(x)\in\C^{q\times q}(x)$, with $\deg R(x)=N^R$ and $\deg L(x)=N^L$,  that satisfy Conditions \eqref{Cond: CondicionesMatricesLideresFinales_1} and \eqref{Cond: CondicionesMatricesLideresFinales_2}, respectively, an Uvarov perturbation of a $q\times p$ matrix of measures is defined by the relation  
\begin{equation*}
    \d \tilde{\mu}(x) R(x) = L(x) \d \mu(x).
\end{equation*}
Note that \( \deg\det R(x)=N^Rp-r  \) and \(\deg\det L(x) = N^Lq-l \) . 
We denote by $\rho_i$ its eigenvalues and by  \( \cev{\boldsymbol{r}}_{i,j,k} ,\vec{\boldsymbol{r}}_{i,j,k} \)   corresponding left and right generalized eigenvectors with $p$ entries, respectively,  associated with corresponding  canonical set of left and right Jordan chains of \( R(x) \). Similarly,  let $\lambda_i$ be the eigenvalues and   \( \cev{\boldsymbol{l}}_{i,j,k} ,\vec{\boldsymbol{l}}_{i,j,k} \)  be the left and right generalized eigenvectors, with $q$ entries, respectively,  associated with corresponding  canonical set of left and right Jordan chains of \( L(x) \).

 The perturbed linear functional admits the decomposition:
\begin{equation}
\label{Eq:GeneralMeasure}    
\d\tilde{\mu}(x) = L(x) \d\mu(x)R^{-1}(x) + \sum_{i=1}^{M^R}\sum_{j=1}^{s^R_i}\sum_{k=0}^{\kappa^R_{i,j}-1}L(x)\boldsymbol{\xi}_{i,j,k}(x) \left( \sum_{l=0}^k\frac{(-1)^l}{l!} \cev{\boldsymbol{r}}_{i,j;k-l}\delta^{(l)}(x-\rho_i)\right),
\end{equation}
where \( \delta \) denotes the Dirac delta distribution and \( \boldsymbol{\xi}_{i,j,k}(x) \) are arbitrary \( q \)-dimensional column vector functions. 

\begin{Remark}\label{Remark: locally integrable}
	We must ensure that the expression \( L(x)\, \d\mu(x)\, R^{-1}(x) \) is a well-defined measure; that is, it must be locally integrable. For instance, if the original measure is absolutely continuous with respect to Lebesgue measure, \( \d\mu = w(x)\, \d x \), then we require that the entries of the matrix function \( L(x)\, w(x)\, R^{-1}(x) \) be locally integrable at the points in \( \sigma(R) \cap \Delta \). This condition is automatically satisfied if the spectrum of \( R(x) \) is disjoint from the support \( \Delta = \operatorname{supp}(\d\mu) \); that is, if \( \sigma(R) \cap \Delta = \varnothing \). However, this requirement is more restrictive than necessary, and in the example we shall analyze, it is not satisfied.
\end{Remark}

\begin{Remark}
There is a slight abuse of notation here: although $\d\tilde{\mu}$ is technically a linear functional, it can be interpreted as a measure only in the special case where it involves delta functions but no derivatives thereof.
\end{Remark}

\subsection{Simple Eigenvalues}\label{S:Uvarov simple}
We first consider the case where all zeros of \( R(x) \) and \( L(x) \) are simple. In this scenario, \eqref{Eq:GeneralMeasure} simplifies to:
\begin{equation}
\label{Eq:SimpleEigMeasure} 
\d\tilde{\mu}(x) = L(x) \d\mu(x)R^{-1}(x) + \sum_{i=1}^{M^R} L(x)\boldsymbol{\xi}_{i}(x) \cev{\boldsymbol{r}}_i\delta(x-\rho_i). 
\end{equation}
Since no derivatives of the delta distribution appear, the Uvarov perturbation of the measure yields a measure—with support with a possibly discrete part—rather than a more general linear functional. Note that \( M^R = N^Rp-r \) and \( M^L = N^Lq-l \) in this case.

\begin{Proposition} \label{Prop:ConnectM}
    The moment matrices satisfy the relation:
    \begin{equation*}
        \tilde{\mathscr{M}}R(\Lambda_{[p]}^\top) = L(\Lambda_{[q]})\mathscr{M}.
    \end{equation*}
\end{Proposition}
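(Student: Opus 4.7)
My plan is to exploit two shift-intertwining identities that translate matrix polynomials evaluated at the block shift operators into multiplication by the same matrix polynomials evaluated at the scalar variable $x$. Specifically, I would first record the identities
\[
R(x)\, X_{[p]}^\top(x) = X_{[p]}^\top(x)\, R(\Lambda_{[p]}^\top), \qquad L(\Lambda_{[q]})\, X_{[q]}(x) = X_{[q]}(x)\, L(x),
\]
both of which follow at once from the eigenrelation $\Lambda_{[r]} X_{[r]}(x) = x X_{[r]}(x)$ (and its transpose) applied monomial by monomial to the matrix coefficients of $R$ and $L$. This is essentially the only ingredient beyond the defining Uvarov relation.

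With these intertwinings in hand, I would start from the integral definition of $\tilde{\mathscr{M}}$, right-multiply by $R(\Lambda_{[p]}^\top)$, push the shift inside the integral using the first identity above, and obtain
\[
\tilde{\mathscr{M}}\, R(\Lambda_{[p]}^\top) = \int_\Delta X_{[q]}(x)\, \d\tilde{\mu}(x)\, R(x)\, X_{[p]}^\top(x).
\]
Next I would invoke the defining relation $\d\tilde{\mu}(x)\, R(x) = L(x)\, \d\mu(x)$ of the Uvarov perturbation, reducing the integrand to $X_{[q]}(x)\, L(x)\, \d\mu(x)\, X_{[p]}^\top(x)$, and finally apply the second intertwining identity in reverse to pull $L(\Lambda_{[q]})$ outside the integral, arriving at $L(\Lambda_{[q]})\, \mathscr{M}$.

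I anticipate the only delicate point to be verifying that the distributional expression in \eqref{Eq:SimpleEigMeasure} actually satisfies the defining relation $\d\tilde{\mu}(x)\, R(x) = L(x)\, \d\mu(x)$. Right-multiplying the discrete part by $R(x)$ produces terms of the form $L(x)\, \boldsymbol{\xi}_i(x)\, \cev{\boldsymbol{r}}_i\, R(x)\, \delta(x-\rho_i)$, which, when tested against a smooth function, return $\cev{\boldsymbol{r}}_i R(\rho_i)$ times the test value at $\rho_i$; this vanishes by the left-eigenvector property $\cev{\boldsymbol{r}}_i\, R(\rho_i) = 0$ available because the eigenvalues are assumed simple. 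Beyond this distributional check the argument is purely algebraic, and I foresee no further obstacle.
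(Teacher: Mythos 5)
Your proposal is correct and follows essentially the same route as the paper: the paper's proof is exactly the computation of pushing $R(\Lambda_{[p]}^\top)$ through $X_{[p]}^\top(x)$ via $\Lambda_{[s]}X_{[s]}(x)=xX_{[s]}(x)$ (the "bimodule structure"), applying $\d\tilde\mu(x)R(x)=L(x)\d\mu(x)$, and pulling $L(\Lambda_{[q]})$ back out. Your extra distributional consistency check of \eqref{Eq:SimpleEigMeasure} against the defining relation, using $\cev{\boldsymbol{r}}_iR(\rho_i)=0$, is not needed for the proposition (which only uses the defining relation) but is a correct and harmless addition.
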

\begin{proof}
    Using the property \( \Lambda_{[s]}X_{[s]}(x) = x X_{[s]}(x) \), we compute:
    \begin{align*}
        \tilde{\mathscr{M}}R(\Lambda_{[p]}^\top) &= \int_\Delta X_{[q]}(x) \d \tilde{\mu}(x) X^\top_{[p]}(x) \sum_{i=0}^{N^R} R_i(\Lambda_{[p]}^\top)^i = \int_\Delta X_{[q]}(x) \d \tilde{\mu}(x) \sum_{i=0}^{N^R} R_i X^\top_{[p]}(x)(\Lambda_{[p]}^\top)^i \\
        &= \int_\Delta X_{[q]}(x) \d \tilde{\mu}(x) \left( \sum_{i=0}^{N^R} R_i x^i \right) X^\top_{[p]}(x) = \int_\Delta X_{[q]}(x) \left( \sum_{i=0}^{N^L} L_i x^i \right) \d \mu(x) X^\top_{[p]}(x) \\ 
        & =  \sum_{i=0}^{N^L} L_i(\Lambda_{[q]})^i \int_\Delta X_{[q]}(x) \d \tilde{\mu}(x) X^\top_{[p]}(x) = L(\Lambda_{[q]})\mathscr{M},
    \end{align*}
     where we have used the bimodule structure of \( \mathbb{C}^{p \times \infty}[x] \) and \( \mathbb{C}^{q \times \infty}[x] \) over \( \mathbb{C}^{p \times p}[x] \) and \( \mathbb{C}^{q \times q}[x] \), respectively.
\end{proof}

Assuming the existence of Gauss--Borel factorizations for both moment matrices:
\[
\begin{aligned}
    \tilde{\mathscr{M}} &= \tilde{S}^{-1}\tilde{H} \tilde{\Bar{S}}^{-\top}, & \mathscr{M} &= S^{-1}H \Bar{S}^{-\top},
\end{aligned}
\]
we can relate the original and perturbed orthogonal polynomials through a connection matrix.

\begin{Proposition} 
  We have the following relation
    \begin{equation*}
        \tilde{S}L(\Lambda_{[q]})S^{-1} = \tilde{H}\tilde{\Bar{S}}^{\top} R(\Lambda_{[p]}^\top) \Bar{S}^{-\top} H^{-1}.
    \end{equation*}
    \end{Proposition}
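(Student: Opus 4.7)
The plan is a direct algebraic consequence of Proposition \ref{Prop:ConnectM}. Starting from
\[
\tilde{\mathscr{M}}\,R(\Lambda_{[p]}^\top) = L(\Lambda_{[q]})\,\mathscr{M},
\]
I would substitute the two Gauss--Borel factorizations $\tilde{\mathscr{M}} = \tilde{S}^{-1}\tilde{H}\tilde{\Bar{S}}^{-\top}$ and $\mathscr{M} = S^{-1}H\Bar{S}^{-\top}$ on the appropriate sides to obtain
\[
\tilde{S}^{-1}\tilde{H}\tilde{\Bar{S}}^{-\top}\,R(\Lambda_{[p]}^\top) \;=\; L(\Lambda_{[q]})\,S^{-1}H\,\Bar{S}^{-\top}.
\]
Next, multiplying on the left by $\tilde{S}$ and on the right by $\Bar{S}^{\top}H^{-1}$ separates the two types of factors: the lower unitriangular pieces $\tilde{S}$ and $S^{-1}$, together with the block lower-banded $L(\Lambda_{[q]})$, collect on one side, while the diagonal factors $\tilde{H}$, $H^{-1}$ and the upper-triangular pieces $\tilde{\Bar{S}}^{-\top}$, $\Bar{S}^{\top}$, together with $R(\Lambda_{[p]}^\top)$, collect on the other. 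This rearrangement delivers the stated identity immediately.

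Because the statement is purely formal, there is no genuine technical obstacle; the proof amounts to a one-line substitution plus a side-swap of invertible factors, both of which are legitimate since $\tilde{S}$, $S$, $\tilde{\Bar{S}}$, $\Bar{S}$, $H$, and $\tilde{H}$ are invertible (the Gauss--Borel factorizations being assumed to exist). The only conceptual point worth flagging is that this identity is the analogue, in the Uvarov setting, of the connection-matrix identities obtained in the earlier Christoffel and Geronimus papers: both sides define a single structured object, the \emph{connection matrix}, whose banded structure will encode, in the sections that follow, the explicit Uvarov-type formulas relating the perturbed and unperturbed polynomials $(\tilde{B},\tilde{A})$ and $(B,A)$. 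For the proposition itself, however, this interpretation plays no role and the verification is immediate from Proposition \ref{Prop:ConnectM}.
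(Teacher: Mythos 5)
Your argument is exactly the paper's: the proof given there consists of the single remark that the equality follows from the Gauss--Borel factorizations together with Proposition~\ref{Prop:ConnectM}, which is precisely the substitution and side-swap you spell out, so the proposal is correct and takes the same route. (One caveat that concerns the printed statement rather than your derivation: multiplying $\tilde{S}^{-1}\tilde{H}\tilde{\Bar{S}}^{-\top}R(\Lambda_{[p]}^{\top})=L(\Lambda_{[q]})S^{-1}H\Bar{S}^{-\top}$ on the left by $\tilde{S}$ and on the right by $\Bar{S}^{\top}H^{-1}$ yields $\tilde{S}L(\Lambda_{[q]})S^{-1}=\tilde{H}\tilde{\Bar{S}}^{-\top}R(\Lambda_{[p]}^{\top})\Bar{S}^{\top}H^{-1}$, so with the convention $\mathscr{M}=S^{-1}H\Bar{S}^{-\top}$ the inverse signs on $\tilde{\Bar{S}}$ and $\Bar{S}$ in the displayed identity appear to be interchanged.)
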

    \begin{proof}
    	The equality follows from Gauss--Borel factorizations and Proposition \ref{Prop:ConnectM}. 
    	\end{proof}
    	
    	\begin{Definition} \label{Prop: OmegaDefinition}
    		The connection matrix \( \Omega \) is
    		\begin{equation*}
    		\Omega\coloneq	\tilde{S}L(\Lambda_{[q]})S^{-1} = \tilde{H}\tilde{\Bar{S}}^{\top} R(\Lambda_{[p]}^\top) \Bar{S}^{-\top} H^{-1}.
    		\end{equation*}
    	\end{Definition}
    	
    \begin{Proposition}
    The connection matrix \(\Omega\) is a banded matrix with at most \( M^R \) nonzero subdiagonals and \( M^L \) nonzero superdiagonals, with ones on its last superdiagonal. Explicitly:
   \begin{equation*}
	\Omega = \begin{bNiceMatrix}
		\Omega_{0,0} & \Omega_{0,1} & \Cdots & \Omega_{0,M^L-1} & 1 & 0 & \Cdots[shorten-end=5pt] \\
		\Omega_{1,0} & \Omega_{1,1} & \Cdots & \Omega_{1,M^L-1} & \Omega_{1,M^L} & 1 & \Ddots[shorten-end=5pt] \\
		\Vdots & \Vdots & & \Vdots & & \Ddots[shorten-end=-25pt] & \Ddots[shorten-end=5pt] \\
		\Omega_{M^R,0} & \Omega_{M^R,1} & \Cdots & \Omega_{M^R,M^L-1} & \Cdots[shorten-end=5pt] & & \Omega_{M^R,M^R+M^L-1} & 1 &  &  \\
		0 & \Ddots[shorten-end=-50pt] & & & & &  &\!\! \Ddots[shorten-end=-8pt] & \Ddots[shorten-end=5pt,] \\
		&  \Ddots[shorten-end=-15pt] & & & & & & &  & 
	\end{bNiceMatrix}.
\end{equation*}
\end{Proposition}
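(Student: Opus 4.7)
My plan is to exploit the two expressions for $\Omega$ in Definition~\ref{Prop: OmegaDefinition} in tandem: the representation $\Omega=\tilde{S}L(\Lambda_{[q]})S^{-1}$ controls the superdiagonal width, while $\Omega=\tilde{H}\tilde{\bar S}^{\top}R(\Lambda_{[p]}^{\top})\bar S^{-\top}H^{-1}$ controls the subdiagonal width. As a preliminary step I would record the obvious counterpart of the earlier Proposition for $L(\Lambda_{[q]})$: because $\Lambda_{[q]}$ is a block upper shift, the matrix $L(\Lambda_{[q]})=\sum_{k=0}^{N^{L}}L_{k}\Lambda_{[q]}^{k}$ is banded upper triangular, and the structural assumption \eqref{Cond: CondicionesMatricesLideresFinales_2} forces its nonzero entries to lie only on scalar superdiagonals of index $0,1,\dots,M^{L}$. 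This is the symmetric version of the proof already carried out for $R(\Lambda_{[p]}^{\top})$ and follows from the same block-by-block enumeration.

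With this in hand, the upper bandwidth bound is immediate. For the expression
\[
\Omega_{ij}=\sum_{a,b}\tilde{S}_{ia}\,L(\Lambda_{[q]})_{ab}\,(S^{-1})_{bj},
\]
the lower unitriangularity of $\tilde{S}$ forces $a\le i$, the upper band structure of $L(\Lambda_{[q]})$ forces $b\le a+M^{L}$, and the lower triangularity of $S^{-1}$ forces $b\ge j$; hence $j\le a+M^{L}\le i+M^{L}$, so $\Omega_{ij}=0$ whenever $j>i+M^{L}$. The analogous calculation using the second representation, together with the fact that $\tilde{\bar S}^{\top}$ and $\bar S^{-\top}$ are upper unitriangular and that $\tilde{H},H^{-1}$ are diagonal, yields $\Omega_{ij}=0$ whenever $i>j+M^{R}$. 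Both band bounds are thus established.

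For the claim that the extremal superdiagonal is identically $1$, I would first show that the $M^{L}$-th superdiagonal of $L(\Lambda_{[q]})$ itself already carries that value. Only the two leading terms can reach this superdiagonal: the block $I_{q-l}$ inside $L_{N^{L}}$, placed by $\Lambda_{[q]}^{N^{L}}$, populates the scalar positions $(iq+s,iq+s+M^{L})$ for $s\in\{l,\dots,q-1\}$, while the block $I_{l}$ inside $L_{N^{L}-1}$, placed by $\Lambda_{[q]}^{N^{L}-1}$, populates the same scalar superdiagonal at the positions with $s\in\{0,\dots,l-1\}$. Any coefficient $L_{k}$ with $k\le N^{L}-2$ can only reach superdiagonals of index at most $(k+1)q-1<M^{L}$, so the two leading contributions tile the $M^{L}$-th superdiagonal of $L(\Lambda_{[q]})$ without gap or overlap with ones. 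Because $M^{L}$ is the largest admissible superdiagonal, in the sum for $\Omega_{i,i+M^{L}}$ only the triple $a=i$, $b=i+M^{L}$ survives, reducing it to $\tilde{S}_{ii}\cdot 1\cdot(S^{-1})_{i+M^{L},i+M^{L}}=1$ by the unitriangularity of $\tilde{S}$ and $S^{-1}$.

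The main obstacle is the index-level bookkeeping in the third step: one has to verify that the $I$-blocks coming from $L_{N^{L}}$ and $L_{N^{L}-1}$ genuinely both land on the single scalar superdiagonal $M^{L}$, that they cover complementary residue classes modulo $q$, and that no lower-order coefficient contributes there. Once this combinatorial check is in place, the propagation from $L(\Lambda_{[q]})$ to $\Omega$ is a routine consequence of the unitriangular character of $\tilde{S}$ and $S^{-1}$.
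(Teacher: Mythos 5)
Your proof is correct and takes essentially the same route as the paper's much terser one: the band widths are read off from the banded structure of $L(\Lambda_{[q]})$ and $R(\Lambda_{[p]}^\top)$ through the two factorized expressions for $\Omega$, and the unit last superdiagonal is traced back to Condition \eqref{Cond: CondicionesMatricesLideresFinales_2} exactly as the paper intends. The only slip is cosmetic: $L(\Lambda_{[q]})$ is block, not scalar, upper triangular (the diagonal blocks $L_0$ produce scalar subdiagonals up to index $q-1$), but this is harmless because the only property you actually use from that factor is that its nonzero entries satisfy $b\le a+M^{L}$, while the subdiagonal bound for $\Omega$ is obtained from the other representation.
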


\begin{proof}
    The banded structure results from the properties of \( R(\Lambda_{[p]}^\top) \) and \( L(\Lambda_{[q]}) \), which have \( M^R \) subdiagonals and \( M^L \) superdiagonals, respectively. The normalization of the last superdiagonal is a consequence of Condition \eqref{Cond: CondicionesMatricesLideresFinales_2}.
\end{proof}

The following Proposition establishes explicit relations between the original and perturbed orthogonal polynomials.
\begin{Proposition}
    The following relations hold: 
    \begin{align}
        \label{Eq: ConnectB} \Omega B(x) & = \tilde{B}(x)L(x), \\ \label{Eq: ConnectA} \tilde{A}(x) \Omega & = R(x) A(x). 
    \end{align}
\end{Proposition}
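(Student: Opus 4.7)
The plan is to reduce both identities to two elementary commutation identities relating a matrix polynomial $P(y)$ of size $r\times r$ and its shift-operator evaluation $P(\Lambda_{[r]})$ acting on the semi-infinite monomial array $X_{[r]}(x)$. Concretely, I would first establish
\begin{equation*}
L(\Lambda_{[q]})X_{[q]}(x)=X_{[q]}(x)L(x),\qquad X_{[p]}^\top(x)R(\Lambda_{[p]}^\top)=R(x)X_{[p]}^\top(x).
\end{equation*}
Both follow from the eigenfunction-type relation $\Lambda_{[r]}X_{[r]}(x)=xX_{[r]}(x)$: after expanding $L(\Lambda_{[q]})=\sum_k L_k\Lambda_{[q]}^k$ and $R(\Lambda_{[p]}^\top)=\sum_k R_k(\Lambda_{[p]}^\top)^k$, I would use that every block of $X_{[r]}(x)$ is $x^iI_r$, so the scalar $x^i$ commutes with the constant matrix coefficients, while the shift produces the required factor $x^k$; the coefficient then ends up on the correct side according to whether the shift acts from the left or via transpose.

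With these at hand, \eqref{Eq: ConnectB} is immediate from the first representation of $\Omega$ in Definition~\ref{Prop: OmegaDefinition}. Writing $B(x)=SX_{[q]}(x)$ and $\tilde B(x)=\tilde SX_{[q]}(x)$, I compute
\begin{equation*}
\Omega B(x)=\tilde SL(\Lambda_{[q]})S^{-1}\cdot SX_{[q]}(x)=\tilde SL(\Lambda_{[q]})X_{[q]}(x)=\tilde SX_{[q]}(x)L(x)=\tilde B(x)L(x).
\end{equation*}

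For \eqref{Eq: ConnectA} I would use the second representation of $\Omega$ together with $A(x)=X_{[p]}^\top(x)\bar S^\top H^{-1}$ and $\tilde A(x)=X_{[p]}^\top(x)\tilde{\bar S}^\top\tilde H^{-1}$. Substituting, the inner pair $\tilde H^{-1}\tilde H$ telescopes and the $\tilde{\bar S}$-factors cancel, leaving
\begin{equation*}
\tilde A(x)\Omega=X_{[p]}^\top(x)R(\Lambda_{[p]}^\top)\bar S^\top H^{-1}=R(x)X_{[p]}^\top(x)\bar S^\top H^{-1}=R(x)A(x),
\end{equation*}
where the middle equality is precisely the second commutation identity.

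There is no substantive obstacle: once the two commutation identities are established, both formulas collapse to a cancellation of Gauss--Borel factors. The only step meriting care is the block-by-block verification of the commutation identities themselves, whose crucial input is that each block of $X_{[r]}(x)$ is a scalar multiple of $I_r$, so that block multiplication respects the commutativity of scalars with the constant coefficient matrices of $L$ and $R$.
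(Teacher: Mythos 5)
Your proposal is correct and follows essentially the same route as the paper: both reduce the identities to the Gauss--Borel representations of $B,\tilde B,A,\tilde A$ together with the two expressions for $\Omega$, so that everything telescopes once one knows $L(\Lambda_{[q]})X_{[q]}(x)=X_{[q]}(x)L(x)$ and $X_{[p]}^\top(x)R(\Lambda_{[p]}^\top)=R(x)X_{[p]}^\top(x)$. The only difference is cosmetic: the paper proves just \eqref{Eq: ConnectA} and leaves the commutation identity (a consequence of $\Lambda_{[r]}X_{[r]}(x)=xX_{[r]}(x)$ and the bimodule structure already invoked in Proposition~\ref{Prop:ConnectM}) implicit, whereas you state and justify it explicitly and treat both equations.
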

\begin{proof}
    We will only prove the Equation \eqref{Eq: ConnectA}, as the other equation can be proven similarly. 
    \begin{equation*}
        \tilde{A}(x) \Omega = X_{[p]}^\top(x) \tilde{\Bar{S}}^{\top} \tilde{H}^{-1} \Omega = X_{[p]}^\top(x) R(\Lambda_{[p]}^\top) \Bar{S}^{-\top} H^{-1} = R(x) X_{[p]}^\top(x)\Bar{S}^{-\top} H^{-1} = R(x) A(x). 
    \end{equation*}
\end{proof}
To obtain connection formulas where the perturbation matrix and perturbed polynomials appear on the same side, we utilize the Cauchy transforms, leading to a linear system for the components of \( \Omega \).
\begin{Proposition}
    In terms of the Cauchy transforms of the original and perturbed orthogonal polynomials, the connection formulas are given by:
    \begin{align}
        \label{Eq:ConnectD}\tilde{D}(z)R(z) & = \Omega D(z) + \int_\Delta \tilde{B}(x) \d \tilde{\mu}(x) \frac{R(z)-R(x)}{z-x}, \\
        \label{Eq:ConnectC} L(z)C(z) & = \tilde{C}(z)\Omega + \int_\Delta \frac{L(z)-L(x)}{z-x} \d \mu(x) A(x).
    \end{align}
\end{Proposition}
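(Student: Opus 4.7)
The plan is to derive both identities by algebraic manipulation of the defining integrals for the Cauchy transforms, using the classical polynomial-splitting trick together with the Uvarov measure relation $\d\tilde{\mu}(x)R(x)=L(x)\d\mu(x)$ and the polynomial connection formulas (\ref{Eq: ConnectB}) and (\ref{Eq: ConnectA}). The two identities are symmetric (one acts from the right on $D$, the other from the left on $C$), so the same idea applies to both with the obvious changes.

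For (\ref{Eq:ConnectD}), I would begin from the definition $\tilde{D}(z)=\int_\Delta \tilde{B}(x)\frac{\d\tilde{\mu}(x)}{z-x}$, multiply on the right by $R(z)$, and decompose $R(z)=R(x)+[R(z)-R(x)]$ inside the integrand. The piece containing $R(z)-R(x)$ is harmless because $(R(z)-R(x))/(z-x)$ is a matrix polynomial in $x$ of degree $N^R-1$ (with coefficients depending on $z$), producing directly the remainder term on the right-hand side. For the piece containing $R(x)$, the key step is to use the Uvarov relation to replace $\d\tilde{\mu}(x)R(x)$ by $L(x)\d\mu(x)$; then (\ref{Eq: ConnectB}), in the form $\tilde{B}(x)L(x)=\Omega B(x)$, allows $\Omega$ to be extracted to the left of the integral sign, leaving exactly $\Omega D(z)$.

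For (\ref{Eq:ConnectC}), the strategy is mirror-symmetric: multiply $C(z)=\int_\Delta\frac{\d\mu(x)}{z-x}A(x)$ on the left by $L(z)$, split $L(z)=L(x)+[L(z)-L(x)]$, and use the Uvarov identity in the opposite direction $L(x)\d\mu(x)=\d\tilde{\mu}(x)R(x)$. Then (\ref{Eq: ConnectA}), in the form $R(x)A(x)=\tilde{A}(x)\Omega$, lets one pull $\Omega$ to the right, recovering $\tilde{C}(z)\Omega$; the second piece again gives the remainder integral since $(L(z)-L(x))/(z-x)$ is polynomial in $x$.

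The main obstacle is not conceptual but notational: one must track carefully the non-commutative order in which $\tilde{B}$, $\d\tilde{\mu}$, $R$ appear (respectively $L$, $\d\mu$, $A$), since the measure identity and the connection formulas each act only on one side. A secondary but routine point is to confirm that the remainder integrals converge: this is immediate because $(R(z)-R(x))/(z-x)$ and $(L(z)-L(x))/(z-x)$ are entire matrix polynomials in $x$ for each fixed $z$, so the integrals reduce to finite matrix combinations of moments of $\d\tilde{\mu}$ and $\d\mu$, which exist by the standing assumptions on the perturbed and unperturbed measures.
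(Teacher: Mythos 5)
Your proposal is correct and is essentially the paper's own argument run in the opposite direction: the paper starts from $\Omega D(z)$, uses \eqref{Eq: ConnectB} together with the measure relation to rewrite the integrand as $\tilde{B}(x)\,\d\tilde{\mu}(x)R(x)/(z-x)$, and then subtracts $\tilde{D}(z)R(z)$, which is the same decomposition $R(z)=R(x)+[R(z)-R(x)]$ you employ. No substantive difference.
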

\begin{proof}
    To establish the first relation, we consider the Cauchy transform of Equation \eqref{Eq: ConnectB}:
    \begin{equation*}
        \Omega D(z) = \int_\Delta \Omega B(x) \frac{\d \mu(x)}{z-x} = \int_\Delta \tilde{B}(x) \frac{\d \tilde{\mu}(x)}{z-x}R(x). 
    \end{equation*}
    Subtracting $\tilde{D}(z)R(z)$ from both sides yields:
    \begin{equation*}
        \Omega D(z) - \tilde{D}(z)R(z) = \int_\Delta \tilde{B}(x) \d \tilde{\mu}(x)\frac{R(x)-R(z)}{z-x}.
    \end{equation*}
    The stated result follows directly from this expression. Equation \eqref{Eq:ConnectC} can be proven through an analogous procedure.
\end{proof}

In practical applications, we will primarily utilize Equations \eqref{Eq: ConnectB} and \eqref{Eq:ConnectD} to solve the linear system for the entries of $\Omega$.

\begin{Lemma}\label{Lema: R-BivaMP}
    The matrix function $\frac{R(x) - R(y)}{x-y}$ is a bivariate matrix polynomial of degree $N^R-1$ in both variables.
\end{Lemma}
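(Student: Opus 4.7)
The plan is a direct computation based on the scalar telescoping identity $x^l - y^l = (x-y)\sum_{k=0}^{l-1} x^k y^{l-1-k}$, lifted to the matrix polynomial setting. Since the coefficient matrices $R_l \in \C^{p\times p}$ appearing in $R(x) = \sum_{l=0}^{N^R} R_l x^l$ do not depend on $x$ or $y$, they commute freely with the scalar monomials $x^k y^{l-1-k}$ and there are no ordering issues to worry about.

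Concretely, I would first write
\begin{equation*}
R(x) - R(y) = \sum_{l=0}^{N^R} R_l (x^l - y^l) = \sum_{l=1}^{N^R} R_l (x^l - y^l),
\end{equation*}
where the $l=0$ summand drops out. Inserting the scalar factorization above, the common factor $(x-y)$ can be extracted on the right of each $R_l$, giving
\begin{equation*}
R(x) - R(y) = (x-y) \sum_{l=1}^{N^R} R_l \sum_{k=0}^{l-1} x^k y^{l-1-k}.
\end{equation*}
Dividing by the scalar $x-y$ (which cancels unambiguously since it is a scalar polynomial) yields
\begin{equation*}
\frac{R(x) - R(y)}{x-y} = \sum_{l=1}^{N^R} R_l \sum_{k=0}^{l-1} x^k y^{l-1-k},
\end{equation*}
which is manifestly a bivariate polynomial in $(x,y)$ with matrix coefficients in $\C^{p\times p}$.

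To read off the degree, I would observe that the maximal power of $x$ appears when $k=l-1$ with $l=N^R$, producing $x^{N^R-1}$; symmetrically for $y$ when $k=0$ and $l=N^R$, producing $y^{N^R-1}$. Thus the degree in each variable is at most $N^R - 1$, and equal to $N^R - 1$ provided $R_{N^R} \neq 0$, which holds under the structural assumption \eqref{Cond: CondicionesMatricesLideresFinales_1}. There is no genuine obstacle in this lemma: it is the standard divided-difference identity, and its role in the subsequent development is purely to guarantee that the integral $\int_\Delta \tilde{B}(x)\,\d\tilde\mu(x)\,\frac{R(z)-R(x)}{z-x}$ in \eqref{Eq:ConnectD} depends polynomially (of degree $N^R-1$) on $z$, which is precisely what is needed to match against the banded structure of $\Omega$ and extract the entries $\Omega_{n,m}$ from a finite linear system.
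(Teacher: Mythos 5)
Your proof is correct and follows essentially the same route as the paper's: both apply the telescoping identity $x^n-y^n=(x-y)\sum_{i=1}^{n}x^{n-i}y^{i-1}$ term by term to $R(x)-R(y)$ and read off the degree from the resulting double sum, your indexing being a trivial relabeling of the paper's. The additional remarks on attaining the degree and on the lemma's role downstream are accurate but not needed.
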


\begin{proof}
    Employing the algebraic identity:
    \begin{equation*}
        x^n - y^n = (x-y)\left( \sum_{i=1}^n x^{n-i}y^{i-1} \right),
    \end{equation*}
    we readily obtain:
    \begin{equation}\label{eq:Rxy}
        \frac{R(x) - R(y)}{x-y} = \sum_{i=1}^N R_i \sum_{j=1}^i \left( x^{i-j}y^{j-1} \right),
    \end{equation}
    which clearly demonstrates the polynomial nature and degree structure in both variables $x$ and $y$.
\end{proof}

\begin{Definition} \label{Def:NotationBD}
    We introduce the following complex numbers: 
\[    \begin{aligned}
        \mathbb{D}_n^{(i)} & \coloneq  D_n(\rho_i)\vec{\boldsymbol{r}}_i, & \mathbb{W}_n^{(i)} & \coloneq B_n(\rho_i)\boldsymbol{\xi}_{i}(\rho_i)\cev{\boldsymbol{r}}_i R'(\rho_i) \vec{\boldsymbol{r}}_i, & \mathbb{B}^{(j)} \coloneqq B_n(\lambda_j)\vec{\boldsymbol{l}}_j, 
    \end{aligned}  \]
    for $n \in \mathbb{N}_0$, $i \in \{1,\dots,M^R \}$ and $j \in \{ 1,\dots,M^L \}$.
\end{Definition}
\begin{Proposition} \label{Prop: OmegaComp}
    For $n \geq M^R$, the entries of the connection matrix $\Omega$  satisfy the following linear system:
    \begin{multline*}
        \begin{bNiceMatrix}
            \Omega_{n,n-M^R} & \Cdots & \Omega_{n,n+M^L-1}
        \end{bNiceMatrix} 
        \begin{bNiceMatrix}
            \mathbb{B}_{n-M^R}^{(1)} & \Cdots & \mathbb{B}_{n-M^R}^{(M^L)} & \mathbb{D}_{n-M^R}^{(1)}-\mathbb{W}_{n-M^R}^{(1)} & \Cdots & \mathbb{D}_{n-M^R}^{(M^R)}-\mathbb{W}_{n-M^R}^{(M^R)} \\ 
            \Vdots & & \Vdots & \Vdots & & \Vdots \\ \\\\
            \mathbb{B}_{n+M^L-1}^{(1)} & \Cdots & \mathbb{B}_{n+M^L-1}^{(M^L)} & \mathbb{D}_{n+M^L-1}^{(1)}-\mathbb{W}_{n+M^L-1}^{(1)} & \Cdots & \mathbb{D}_{n+M^L-1}^{(M^R)}-\mathbb{W}_{n+M^L-1}^{(M^R)}
        \end{bNiceMatrix} \\
        = - \begin{bNiceMatrix}
            \mathbb{B}_{n+M^L}^{(1)} & \Cdots & \mathbb{B}_{n+M^L}^{(M^L)} & \mathbb{D}_{n+M^L}^{(1)}-\mathbb{W}_{n+M^L}^{(1)} & \Cdots & \mathbb{D}_{n+M^L}^{(M^R)}-\mathbb{W}_{n+M^L}^{(M^R)}
        \end{bNiceMatrix}.        
    \end{multline*}
\end{Proposition}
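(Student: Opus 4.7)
The strategy is to obtain the $M^L+M^R$ scalar identities of the claimed linear system as two natural families (one attached to $L$ and one to $R$), truncate the resulting infinite sums using the banded structure of $\Omega$ together with the normalization $\Omega_{n,n+M^L}=1$, and then package them into the displayed matrix equation.

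For the first $M^L$ columns I would start from \eqref{Eq: ConnectB}, $\Omega B(x)=\tilde B(x)L(x)$. Evaluating at the eigenvalue $\lambda_j$ of $L$ and multiplying on the right by the right eigenvector $\vec{\boldsymbol{l}}_j$ annihilates the right-hand side, so the $n$-th row yields $\sum_m \Omega_{n,m}\,\mathbb B_m^{(j)}=0$. Because $\Omega$ has at most $M^R$ subdiagonals and $M^L$ superdiagonals, this sum truncates to $m\in\{n-M^R,\dots,n+M^L\}$, and using $\Omega_{n,n+M^L}=1$ one obtains the first $M^L$ columns of the Proposition.

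For the remaining $M^R$ columns I would analyze the integral
\[
J_i^{(n)}\coloneq \int_\Delta \tilde B_n(x)\,\d\tilde\mu(x)\,\frac{R(\rho_i)-R(x)}{\rho_i-x}\,\vec{\boldsymbol{r}}_i,
\]
which is the $n$-th component of the integral term of \eqref{Eq:ConnectD} evaluated at $z=\rho_i$ and right-multiplied by $\vec{\boldsymbol{r}}_i$. Decomposing $\d\tilde\mu$ into the continuous piece $L(x)\d\mu(x)R^{-1}(x)$ and the discrete masses at $\rho_k$, and using the identity $R^{-1}(x)\frac{R(\rho_i)-R(x)}{\rho_i-x}\vec{\boldsymbol{r}}_i=\frac{\vec{\boldsymbol{r}}_i}{x-\rho_i}$ (which follows from $R(\rho_i)\vec{\boldsymbol{r}}_i=0$), the continuous piece collapses to $-\sum_m \Omega_{n,m}\mathbb D_m^{(i)}$ after invoking $\tilde B_n(x)L(x)=\sum_m\Omega_{n,m}B_m(x)$ and the definition of the Cauchy transform. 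In the discrete piece every $k\neq i$ summand vanishes because $\cev{\boldsymbol{r}}_k R(\rho_k)=0$ and $R(\rho_i)\vec{\boldsymbol{r}}_i=0$ jointly annihilate $\cev{\boldsymbol{r}}_k\frac{R(\rho_i)-R(\rho_k)}{\rho_i-\rho_k}\vec{\boldsymbol{r}}_i$; the $k=i$ summand, via the limit $\frac{R(\rho_i)-R(x)}{\rho_i-x}\big|_{x=\rho_i}=R'(\rho_i)$ provided by Lemma \ref{Lema: R-BivaMP}, produces $\sum_m\Omega_{n,m}\mathbb W_m^{(i)}$. Altogether
\[
J_i^{(n)}=-\sum_m \Omega_{n,m}\bigl(\mathbb D_m^{(i)}-\mathbb W_m^{(i)}\bigr),
\]
so the desired $\mathbb D-\mathbb W$ equations are equivalent to $J_i^{(n)}=0$.

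Showing $J_i^{(n)}=0$ for $n\geq M^R$ is the heart of the argument and the main technical obstacle. The vector polynomial $\frac{R(\rho_i)-R(x)}{\rho_i-x}\vec{\boldsymbol{r}}_i$ has degree at most $N^R-1$ in each component, but the block structure \eqref{Cond: CondicionesMatricesLideresFinales_1} forces the last $r$ rows of $R_{N^R}$ to vanish, so its components indexed by $a\in\{p-r+1,\dots,p\}$ actually have degree $\leq N^R-2$. Substituting these refined degree bounds into the orthogonality $\int \sum_b \tilde B_n^{(b)}\,\d\tilde\mu_{b,a}\,x^l=0$ valid for $l\leq \lceil(n-a+2)/p\rceil-1$, a short case analysis on $a$ confirms that $n\geq M^R=pN^R-r$ is exactly the bound (also dictated by $n-M^R\geq 0$) at which every component integral vanishes, giving $J_i^{(n)}=0$. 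Without exploiting this structural refinement of $R_{N^R}$ one would only reach $n\geq pN^R-1$, which is strictly stronger than $n\geq M^R$ whenever $r\geq 2$; matching the Proposition's hypothesis exactly to the banded width of $\Omega$ is therefore the delicate point of the proof.
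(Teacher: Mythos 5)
Your proposal is correct and follows essentially the same route as the paper: the first $M^L$ equations come from $\Omega B(\lambda_j)\vec{\boldsymbol{l}}_j=0$ together with the band structure and the unit superdiagonal, and the remaining $M^R$ equations come from evaluating \eqref{Eq:ConnectD} at $z=\rho_i$ against $\vec{\boldsymbol{r}}_i$, decomposing $\d\tilde\mu$ into its continuous and discrete parts, using $R(\rho_i)\vec{\boldsymbol{r}}_i=0$ and the $R'(\rho_i)$ limit, and killing the integral term by orthogonality. Your explicit degree count for that last step --- exploiting the vanishing of the last $r$ rows of $R_{N^R}$ to reach the threshold $n\geq M^R=N^Rp-r$ rather than $n\geq N^Rp-1$ --- is a correct elaboration of a point the paper only asserts via Lemma \ref{Lema: R-BivaMP} and the orthogonality relations.
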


\begin{proof}
    Applying a right eigenvector of $L(x)$ to Equation \eqref{Eq: ConnectB} and evaluating at the corresponding eigenvalue yields:
    \begin{equation*}
        \Omega B(\lambda_i) \vec{\boldsymbol{l}}_i = 0.
    \end{equation*}
    For $n \geq M^R$, this relation expands entrywise as:
    \begin{equation*}
        \Omega_{n,n-M^R}B_{n-M^R}(\lambda_i)\vec{\boldsymbol{l}}_i + \cdots + \Omega_{n,n+M^L-1}B_{n+M^L-1}(\lambda_i)\vec{\boldsymbol{l}}_i + B_{n+M^L}(\lambda_i)\vec{\boldsymbol{l}}_i = 0.
    \end{equation*}
    Using the notation introduced above, we obtain:
    \begin{equation*}
        \begin{bNiceMatrix}
            \Omega_{n,n-M^R} & \Cdots & \Omega_{n,n+M^L-1}
        \end{bNiceMatrix} 
        \begin{bNiceMatrix}
            \mathbb{B}_{n-M^R}^{(i)} \\ \Vdots \\\\ \mathbb{B}_{n+M^L-1}^{(i)}
        \end{bNiceMatrix} 
        = -\mathbb{B}_{n+M^L}^{(i)}.
    \end{equation*}
    Considering all $M^L$ roots of $\det L(x)$ provides $M^L$ equations for the $M^R+M^L$ unknowns.

    To complete the system, we examine Equation \eqref{Eq:ConnectD}. Direct evaluation at $x=\rho_i$ is not feasible, so we consider the limit:
    \begin{align*}
        \lim_{x \rightarrow \rho_i} \tilde{D}(x) R(x) \vec{\boldsymbol{r}}_i 
        &= \lim_{x \rightarrow \rho_i} \left( \int_\Delta \tilde{B}(y) \frac{\d \mu(y)}{x-y}R^{-1}(y)R(x) + \sum_{j=1}^{M^R} \tilde{B}(\rho_j) L(\rho_j)\boldsymbol{\xi}_j(\rho_j)\cev{\boldsymbol{r}}_i\frac{R(x)}{x-\rho_j} \right) \vec{\boldsymbol{r}}_i \\
        &= \tilde{B}(\rho_i) L(\rho_i) \boldsymbol{\xi}_i(\rho_i)\cev{\boldsymbol{r}}_i \lim_{x \rightarrow \rho_i} \frac{R(x)-R(\rho_i)}{x-\rho_i} \vec{\boldsymbol{r}}_i,
    \end{align*}
    where we have used $R(\rho_i)\vec{\boldsymbol{r}}_{i} = 0$. The remaining non-zero term becomes:
    \begin{equation*}
        \lim_{x \rightarrow \rho_i} \tilde{D}(x) R(x) \vec{\boldsymbol{r}}_i = \tilde{B}(\rho_i) L(\rho_i) \boldsymbol{\xi}(\rho_i)_i\cev{\boldsymbol{r}}_i R'(\rho_i) \vec{\boldsymbol{r}}_i.
    \end{equation*}
    
    For $n \geq M^R$, Lemma \ref{Lema: R-BivaMP} and the orthogonality relations imply:
    \begin{align*}
        \sum_{a=1}^p\sum_{b=1}^q \int_\Delta \tilde{B}_n^{(b)}(y) \d \tilde{\mu}_{b,a}(y) \frac{\left( R(x)-R(y) \right)_{a,\Tilde{a}}}{x-y} &= 0, \quad \Tilde{a}\in \{1,\dots, p\}.
    \end{align*}
    Evaluating at $z=\rho_i$ and applying the right eigenvector, Equation \eqref{Eq:ConnectD} becomes:
    \begin{multline*}
        \Omega_{n,n-M^R}\left(D_{n-M^R}(\rho_i)-B_{n-M^R}(\rho_i)\boldsymbol{\xi}_i(\rho_i)\cev{\boldsymbol{r}}_i R'(\rho_i)\right)\vec{\boldsymbol{r}}_i \\
        + \cdots + \Omega_{n,n+M^L-1}\left(D_{n+M^L-1}(\rho_i)-B_{n+M^L-1}(\rho_i)\boldsymbol{\xi}_i(\rho_i)\cev{\boldsymbol{r}}_i R'(\rho_i)\right)\vec{\boldsymbol{r}}_i \\
        + \left(D_{n+M^L}(\rho_i)-B_{n+M^L}(\rho_i)\boldsymbol{\xi}_i(\rho_i)\cev{\boldsymbol{r}}_i R'(\rho_i)\right)\vec{\boldsymbol{r}}_i = 0.
    \end{multline*}
    Using Definition \ref{Def:NotationBD}, this yields:
    \begin{equation*}
        \begin{bNiceMatrix}
            \Omega_{n,n-M^R} & \Cdots & \Omega_{n,n+M^L-1}
        \end{bNiceMatrix} 
        \begin{bNiceMatrix}
            \mathbb{D}_{n-M^R}^{(i)}-\mathbb{W}_{n-M^R}^{(i)} \\ \Vdots \\\\ \mathbb{D}_{n+M^L-1}^{(i)}-\mathbb{W}_{n+M^L-1}^{(i)}
        \end{bNiceMatrix} 
        = - \left( \mathbb{D}_{n+M^L}^{(i)}-\mathbb{W}_{n+M^L}^{(i)} \right).
    \end{equation*}
    Including all roots of $\det R(x)$ provides the remaining $M^R$ equations to complete the system.
\end{proof}
\begin{Proposition} \label{prop: Commutator}
    For $n \geq \mathrm{max}\{M^R,M^L\}$, the commutator $\left[ \Omega, \Pi_{n-1} \right]$ (where $\Pi_n$ is the diagonal projection matrix with ones in the first $n+1$ entries) satisfies: 
    \begin{align*}
   \NiceMatrixOptions{cell-space-top-limit=3pt} 
   \left[ \Omega, \Pi_{n-1} \right] = \left[\begin{NiceArray}{c|c}
	   0_{n\times n} & -\Omega^2 \\ \hline
	   \Omega^3 & 0_{\infty \times \infty}
    \end{NiceArray}\right], 
    \end{align*}
    where 
    \begin{align*}
        \Omega^2 & = \begin{bNiceMatrix}
            0 & \Cdots[shorten-end=7pt] \\
            \Vdots \\
            1 & 0 & \Cdots[shorten-end=7pt] \\
            \Omega_{n-M^L+1,n} & 1 & 0& \Cdots[shorten-end=7pt] \\
            \Vdots & \Ddots &  & \\
            \Omega_{n-1,n} & \Cdots & \Omega_{n-1,n+M^L-2} & 1 & 0 & \Cdots[shorten-end=7pt]
        \end{bNiceMatrix}, & \Omega^3 = \begin{bNiceMatrix}
            0 & \Cdots & 0 & \Omega_{n,n-M^R} & \Cdots & \Omega_{n,n-1} \\
            \Vdots & & & \Ddots & \Ddots & \Vdots \\
            0 & \Cdots & & & 0 & \Omega_{n+M^R-1,n-1} \\
            \Vdots & & & & & 0 \\
            & & & & & \Vdots[shorten-end=4pt] 
        \end{bNiceMatrix}. 
    \end{align*}
\end{Proposition}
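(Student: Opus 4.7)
The plan is to compute $[\Omega,\Pi_{n-1}]=\Omega\Pi_{n-1}-\Pi_{n-1}\Omega$ directly and read off the result from the banded description of $\Omega$ given in the previous Proposition, namely that $\Omega$ has at most $M^R$ nonzero subdiagonals, at most $M^L$ nonzero superdiagonals, and that its outermost superdiagonal (at offset $+M^L$) is identically equal to $1$.

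First, I would split both row and column indices at position $n$ into the four natural blocks. Since right-multiplication by $\Pi_{n-1}$ annihilates every column of index $\geq n$, and left-multiplication by $\Pi_{n-1}$ annihilates every row of index $\geq n$, the top-left $n\times n$ block of $\Omega\Pi_{n-1}$ and of $\Pi_{n-1}\Omega$ both coincide with the top-left submatrix of $\Omega$, and their difference vanishes. The bottom-right block is trivially zero on both sides. Only the two off-diagonal blocks survive, and each reduces to a signed restriction of $\Omega$ itself: the top-right block equals $-\Omega$ restricted to rows $<n$ and columns $\geq n$, while the bottom-left block equals $+\Omega$ restricted to rows $\geq n$ and columns $<n$.

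Next I would invoke the bandwidth of $\Omega$ to locate the nonzero entries inside these restrictions. For the top-right block, the superdiagonal bound $j-i\leq M^L$ together with $j\geq n$ and $i\leq n-1$ forces the entries to vanish outside the window $i\in\{n-M^L,\ldots,n-1\}$, $j\in\{n,\ldots,n+M^L-1\}$; the hypothesis $n\geq M^L$ ensures this window lies inside the array. The normalization $\Omega_{i,i+M^L}=1$ then places the ones on the antidiagonal of the window, yielding exactly the matrix $\Omega^2$ of the statement with the prescribed minus sign. The bottom-left block is treated symmetrically: the subdiagonal bound $i-j\leq M^R$ combined with $j\leq n-1$ and $i\geq n$ confines the nonzero entries to rows $i\in\{n,\ldots,n+M^R-1\}$ and columns $j\in\{n-M^R,\ldots,n-1\}$, with $n\geq M^R$ guaranteeing nonnegativity of the column indices, reproducing $\Omega^3$.

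The main obstacle is strictly bookkeeping: carefully tracking which indices are annihilated by the bandwidth and verifying the location of the normalizing ones along the outermost superdiagonal. The hypothesis $n\geq\max\{M^R,M^L\}$ plays only the boundary role of keeping all extremal row and column indices nonnegative, and no further input beyond the banded structure of $\Omega$ and its outer superdiagonal normalization is required.
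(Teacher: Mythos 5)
Your computation is correct: writing $[\Omega,\Pi_{n-1}]_{ij}=\Omega_{ij}\bigl(\mathbb{1}[j\le n-1]-\mathbb{1}[i\le n-1]\bigr)$ immediately kills the two diagonal blocks and leaves $-\Omega_{ij}$ on rows $i<n$, columns $j\ge n$ and $+\Omega_{ij}$ on rows $i\ge n$, columns $j<n$; the band bounds $-M^R\le j-i\le M^L$ then confine the surviving entries to exactly the windows displayed as $\Omega^2$ and $\Omega^3$, and the normalization $\Omega_{i,i+M^L}=1$ accounts for the ones on the antidiagonal of $\Omega^2$. It is worth pointing out that the paper itself does not prove this Proposition at all: the authors state it, remark afterwards that ``we have not proven these results concretely,'' and only verify the pattern on the explicit example $M^L=3$, $M^R=2$. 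Your argument is therefore not merely a restatement of the paper's reasoning but actually supplies the missing proof, and it agrees with the paper's worked example (for instance, for $n=2$ your windows give rows $\{0,1\}$, columns $\{2,3,4\}$ for $\Omega^2$ and rows $\{2,3\}$, columns $\{0,1\}$ for $\Omega^3$, matching the displayed matrices). The only cosmetic caveat is that your index bookkeeping tacitly uses that $\Pi_{n-1}$ projects onto the coordinates $0,\dots,n-1$, which is indeed the convention fixed in the statement.
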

\begin{Definition}
For $n \geq \mathrm{max}\{M^R,M^L\}$,	$a,d\in\{1,\dots,p\}$ and $b\in\{1,\dots,q\}$, we introduce the notation
	\[\begin{aligned}
	\tilde A_{n,M^L,M^R}^{(a)}(x)&\coloneq	\begin{bNiceMatrix}
		\tilde{A}_{n-M^L}^{(a)}(x) & \Cdots & \tilde{A}_{n-1}^{(a)}(x) & \tilde{A}_{n}^{(a)}(x) & \Cdots & \tilde{A}_{n+M^R-1}^{(a)}(x)
	\end{bNiceMatrix} \\
	\Omega_{n,M^R,M^L}&\coloneq \begin{bNiceMatrix}
		0 & \Cdots & & 0 & 1 & 0 & \Cdots & 0 \\
		\Vdots &  &&  \Vdots& \Omega_{n-M^L+1,n} & \Ddots & \Ddots& \Vdots \\
		& & & &  \Vdots& \Ddots[shorten-end=-30pt] & \Ddots & 0\\
		0 & \Cdots && 0 & \Omega_{n-1,n} & \Cdots & \Omega_{n-1,n+M^L-2} & 1 \\
		-\Omega_{n,n-M^R} & \Cdots && -\Omega_{n,n-1} & 0 & \Cdots & 0 & 0\\
		0 & \Ddots[shorten-end=-35pt] && \Vdots & \Vdots & & \Vdots & \Vdots \\
		\Vdots & \Ddots &&&&&&\\[-10pt]
		0 & \Cdots & 0 &-\Omega_{n+M^R-1,n-1} & 0 & \Cdots & 0 & 0 \\
	\end{bNiceMatrix}
\end{aligned}\]
and
\[
\begin{aligned}
	B_{n,M^R,M^L}^{(b)}(y)&\coloneq\begin{bNiceMatrix}
		B_{n-M^R}^{(b)}(y) \\
		\Vdots \\\\
		B_{n-1}^{(b)}(y) \\[2pt]
		B_{n}^{(b)}(y) \\
		\Vdots \\\\
		B_{n+M^L-1}^{(b)}(y) \\
	\end{bNiceMatrix},&
	D_{n,M^R,M^L}^{(d)}(y)&\coloneq\begin{bNiceMatrix}
		D_{n-M^R}^{(d)}(y) \\
		\Vdots \\\\
		D_{n-1}^{(d)}(y) \\[2pt]
		D_{n}^{(d)}(y) \\
		\Vdots \\\\
		D_{n+M^L-1}^{(d)}(y) \\
	\end{bNiceMatrix}.
	\end{aligned}\]
	We will also use
	\begin{align*}
	\tilde A_{n,M^L,M^R}(x)=\begin{bNiceMatrix}
		\tilde A_{n,M^L,M^R}^{(1)}(x)\\
		\Vdots\\\\\\
		\tilde A_{n,M^L,M^R}^{(p)}(x)
	\end{bNiceMatrix}\in \C^{p\times(M^R+M^L)}(x)
	\end{align*}
\end{Definition}
 \begin{Proposition}
 The products $\tilde{A}(x)\left[ \Omega,\Pi_{n-1} \right]B(y)$ and $\tilde{A}(x)\left[ \Omega,\Pi_{n-1} \right]D(y)$ yield matrices of sizes $p\times q$ and $p \times p$, respectively. 
 
 Moreover, for $n \geq \mathrm{max}\{M^R,M^L\}$, $a,d\in\{1,\dots,p\}$ and $b\in\{1,\dots,q\}$, these products expand entrywise as:
    \begin{align*}
        \tilde{A}^{(a)}(x)\left[ \Omega,\Pi_{n-1} \right]B^{(b)}(y) &= - \tilde A^{(a)}_{n,M^L,M^R}(x)\Omega_{n,M^R,M^L}B^{(b)}_{n,M^L,M^R}(y),\\
      \tilde{A}^{(a)}(x)\left[ \Omega,\Pi_{n-1} \right]D^{(d)}(y) &= - \tilde A^{(a)}_{n,M^L,M^R}(x)\Omega_{n,M^R,M^L}D^{(d)}_{n,M^L,M^R}(y).
    \end{align*}
\end{Proposition}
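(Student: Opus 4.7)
The claim about matrix sizes is immediate from the shapes: $\tilde A(x)\in\C^{p\times\infty}(x)$, $[\Omega,\Pi_{n-1}]\in\C^{\infty\times\infty}$, $B(y)\in\C^{\infty\times q}(y)$, and $D(y)\in\C^{\infty\times p}(y)$, so the products have sizes $p\times q$ and $p\times p$, respectively.

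For the entrywise expansion, the plan is to exploit the block form of the commutator provided by Proposition \ref{prop: Commutator}, namely
\[
[\Omega,\Pi_{n-1}]=\begin{pmatrix} 0_{n\times n} & -\Omega^2\\ \Omega^3 & 0_{\infty\times\infty}\end{pmatrix},
\]
together with the banded structure of $\Omega$ (which has $M^R$ subdiagonals and $M^L$ superdiagonals). The nonzero entries of $-\Omega^2$ are then confined to absolute row indices $\{n-M^L,\dots,n-1\}$ and column indices $\{n,\dots,n+M^L-1\}$, while those of $\Omega^3$ are supported on rows $\{n,\dots,n+M^R-1\}$ and columns $\{n-M^R,\dots,n-1\}$. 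Consequently, in both products only the entries of $\tilde A^{(a)}(x)$ with indices in $\{n-M^L,\dots,n+M^R-1\}$ and those of $B^{(b)}(y)$, $D^{(d)}(y)$ with indices in $\{n-M^R,\dots,n+M^L-1\}$ can contribute; by construction these are precisely the row vector $\tilde A^{(a)}_{n,M^L,M^R}(x)$ and the column vectors $B^{(b)}_{n,M^R,M^L}(y)$, $D^{(d)}_{n,M^R,M^L}(y)$ introduced in the statement.

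The remaining step is to identify the resulting $(M^L+M^R)\times(M^R+M^L)$ middle matrix with $\Omega_{n,M^R,M^L}$. Ordering rows and columns first by indices $<n$ and then by indices $\geq n$ truncates $[\Omega,\Pi_{n-1}]$ to
\[
\begin{pmatrix} 0_{M^L\times M^R} & -\Omega^2\big|_{\mathrm{res}}\\ \Omega^3\big|_{\mathrm{res}} & 0_{M^R\times M^L}\end{pmatrix},
\]
and direct comparison with the explicit entries displayed in the definition of $\Omega_{n,M^R,M^L}$---whose top-right block carries the positive $\Omega^2$-entries and whose bottom-left block carries the negative $\Omega^3$-entries---shows that this truncation equals $-\Omega_{n,M^R,M^L}$. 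Pulling this sign outside yields the first claimed identity, and an identical computation with $D(y)$ in place of $B(y)$ gives the second.

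The main obstacle I anticipate is purely bookkeeping: one must keep track of the block positions and signs so that the ordering of rows and columns in the middle matrix matches the conventions fixed in the definitions of $\tilde A^{(a)}_{n,M^L,M^R}$, $B^{(b)}_{n,M^R,M^L}$, and $D^{(d)}_{n,M^R,M^L}$. The hypothesis $n\geq\max\{M^R,M^L\}$ is exactly what guarantees that the index windows $\{n-M^L,\dots,n+M^R-1\}$ and $\{n-M^R,\dots,n+M^L-1\}$ lie in $\N_0$, so that no truncation effects at the boundary $0$ disturb the block decomposition, and the overall negative sign in the final identity is precisely the minus in front of $\Omega^2$ in the commutator block form.
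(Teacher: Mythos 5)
Your argument is correct, and it is worth noting that the paper itself does not actually prove this Proposition: immediately after stating it (and Remark \ref{Rm: CommutatorSpecial}) the authors write that they ``have not proven these results concretely'' and instead only work out the explicit example with $M^L=3$, $M^R=2$. Your proof therefore supplies the missing argument, and it follows exactly the route the example suggests: the identity $[\Omega,\Pi_{n-1}]_{ij}=\Omega_{ij}\bigl(\mathbf{1}_{\{j<n\}}-\mathbf{1}_{\{i<n\}}\bigr)$ gives the block form of Proposition \ref{prop: Commutator}, the band structure of $\Omega$ ($M^R$ subdiagonals, $M^L$ superdiagonals with ones on the last one) confines $-\Omega^2$ to rows $n-M^L,\dots,n-1$ and columns $n,\dots,n+M^L-1$ and $\Omega^3$ to rows $n,\dots,n+M^R-1$ and columns $n-M^R,\dots,n-1$, and the resulting $(M^L+M^R)\times(M^R+M^L)$ truncation is exactly $-\Omega_{n,M^R,M^L}$ since that matrix carries the $\Omega^2$ entries with a plus sign and the $\Omega^3$ entries with a minus sign; your sign check and your observation that $n\geq\max\{M^R,M^L\}$ is what keeps the index windows inside $\N_0$ are both consistent with the paper's $n=2$ example. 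The only cosmetic point is the subscript ordering $B^{(b)}_{n,M^L,M^R}$ versus the defined $B^{(b)}_{n,M^R,M^L}$, which is an inconsistency already present in the paper's own statement and which you resolve in the direction of the definition.
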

\begin{Remark} \label{Rm: CommutatorSpecial}
    These formulas remain valid for $n < M^L$ or $n<M^R$. When $n < M^R$, $\Omega^3$ begins at the first column with non-negative integer components of $\Omega$. For $n = M^L-1$, $\Omega^2$ starts from the row containing the first non-zero $\Omega$ component. For $n < M^L-1$, $\Omega^2$ initiates at the first row with non-negative integer $\Omega$ components. 
\end{Remark}

Although we have not proven these results concretely, we examine an explicit example. 
\begin{Example}
    Let's consider the case where $M^L=3$ and $M^R=2$. The connection matrix takes the form: 
    \begin{equation*}
        \Omega = \begin{bNiceMatrix}
            \Omega_{0,0} & \Omega_{0,1} & \Omega_{0,2} & 1 & 0 & \Cdots[shorten-end=-20pt] &  &&\phantom{text}\\[-2pt]
            \Omega_{1,0} &     \Omega_{1,1}  &     \Omega_{1,2} & \Omega_{1,3} & 1 & \Ddots[shorten-end=-24pt] &&& \phantom{text}\\
            \Omega_{2,0} &  \Omega_{2,1}&  \Omega_{2,2} & \Omega_{2,3} & \Omega_{2,4} & 1  &&&\phantom{text} \\
            0 & \Omega_{3,1} & \Omega_{3,2} & \Omega_{3,3}  & \Omega_{3,4} & \Omega_{3,5} & 1 &  &  \phantom{text}\\
            \Vdots[shorten-end=-10pt] & \Ddots[shorten-end=0pt] & \Ddots[shorten-end=-5pt] &\Ddots[shorten-end=-5pt] &\Ddots[shorten-end=-5pt] & \Ddots&  \Ddots[shorten-end=0pt] & \Ddots[shorten-end=0pt] &  \phantom{text}
            \\&&&&&&&&\\&&&&&&&&\\&&&&&&&&\\
            \phantom{text}&  \phantom{text}&  \phantom{text}&  \phantom{text}&  \phantom{text}&  \phantom{text} & \phantom{text}&\phantom{text}&\phantom{text}
        \end{bNiceMatrix}.
    \end{equation*}
For $n=1$, we find: 
\[\begin{aligned}
    \Omega^3 & = \begin{bNiceMatrix}
        \Omega_{1,0} \\ \Omega_{2,0} \\ 0 \\ \Vdots[shorten-end=3pt]\\\\
    \end{bNiceMatrix}, & \Omega^2 = & \begin{bNiceMatrix}
        \Omega_{0,1} & \Omega_{0,2} & 1 & 0 & \Cdots[shorten-end=6pt] & &
    \end{bNiceMatrix}.
\end{aligned}\]
For $n=2$, we have:
\[\begin{aligned}
    \Omega^3 & = \begin{bNiceMatrix}
        \Omega_{2,0} & \Omega_{2,1} \\
        0 & \Omega_{3,1} \\
    0   & 0 \\
     \Vdots[shorten-end=3pt]     & \Vdots[shorten-end=3pt] \\ \\
    \end{bNiceMatrix}, & \Omega^2 = & \begin{bNiceMatrix}
        \Omega_{0,2} &  1 & 0 & 0&\Cdots[shorten-end=8pt] &&\\
        \Omega_{1,2} & \Omega_{1,3} & 1 & 0 & \Cdots[shorten-end=8pt]&&
    \end{bNiceMatrix}.
\end{aligned}\]
Lastly, for $n=3$: 
\[\begin{aligned}
    \Omega^3 & = \begin{bNiceMatrix}
        0 & \Omega_{3,1} & \Omega_{3,2} \\
        0& 0 & \Omega_{4,2} \\
0& 0& 0 \\
          \Vdots[shorten-end=3pt]        & \Vdots[shorten-end=3pt] & \Vdots[shorten-end=3pt]
  \\\\  \end{bNiceMatrix}, & \Omega^2 = & \begin{bNiceMatrix}
        1 & 0 & 0 & 0&\Cdots[shorten-end=8pt] && \\
        \Omega_{1,3} &  1 & 0 & 0&\Cdots[shorten-end=8pt] &&\\
        \Omega_{2,3} & \Omega_{2,4} & 1 & 0 & \Cdots[shorten-end=8pt] &&
    \end{bNiceMatrix}.
\end{aligned}\]
For $n=2$, entrywise, the product $\tilde{A}(x)\left[ \Omega,\Pi_{n-1} \right]B(y)$ reads: 
\begin{multline*}
    \tilde{A}^{(a)}(x)\left[ \Omega,\Pi_{n-1} \right]B^{(b)}(y)\\ = -\begin{bNiceMatrix}
        \tilde{A}_0^{(a)}(x) &   \tilde{A}_1^{(a)}(x) & \tilde{A}_2^{(a)}(x) & \tilde{A}_3^{(a)}(x)
    \end{bNiceMatrix}\begin{bNiceMatrix}
        0 & 0 & \Omega_{0,2} &  1 & 0 \\
        0 & 0 & \Omega_{1,2} & \Omega_{1,3} & 1 \\
        -\Omega_{2,0} & -\Omega_{2,1} & 0 & 0 & 0\\
        0 & -\Omega_{3,1} & 0 & 0 & 0
    \end{bNiceMatrix}\begin{bNiceMatrix}
        B_0^{(b)}(y) \\[2pt] B_1^{(b)}(y) \\[2pt] B_2^{(b)}(y)\\[2pt] B_3^{(b)}(y)\\[2pt]  B_4^{(b)}(y)
    \end{bNiceMatrix}.
\end{multline*}
\end{Example}

\begin{Proposition} \label{Prop: KernelConex}
    The connection formulas for the kernel polynomials are:
    \begin{equation} \label{Eq: KernelConex}
        \tilde{K}^{[n-1]}(x,y)L(y) = R(x)K^{[n-1]}(x,y) - \tilde{A}(x)\left[ \Omega,\Pi_{n-1} \right]B(y), 
    \end{equation}
    and for $n \geq M^R$, the mixed-type CD kernels satisfy: 
    \begin{equation} \label{Eq: MixedKernelConex}
        \tilde{K}^{[n-1]}_D(x,y)R(y) = R(x)K^{[n-1]}_D(x,y) - \tilde{A}(x)\left[ \Omega,\Pi_{n-1} \right]D(y) + \frac{R(x)-R(y)}{x-y}.
    \end{equation}
\end{Proposition}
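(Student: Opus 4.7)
The plan is to expand the kernels through their matrix definitions $K^{[n-1]}(x,y)=A(x)\Pi_{n-1}B(y)$, $\tilde{K}^{[n-1]}(x,y)=\tilde{A}(x)\Pi_{n-1}\tilde{B}(y)$, $K^{[n-1]}_D(x,y)=A(x)\Pi_{n-1}D(y)$, and $\tilde{K}^{[n-1]}_D(x,y)=\tilde{A}(x)\Pi_{n-1}\tilde{D}(y)$, and then to transport the connection matrix $\Omega$ past the projector $\Pi_{n-1}$ using the connection formulas already established; the obstruction to this transport is exactly $[\Omega,\Pi_{n-1}]$, which reproduces the commutator term on the right-hand sides of both claimed identities.

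For \eqref{Eq: KernelConex} I would apply \eqref{Eq: ConnectB} to write $\tilde{K}^{[n-1]}(x,y)L(y)=\tilde{A}(x)\Pi_{n-1}\tilde{B}(y)L(y)=\tilde{A}(x)\Pi_{n-1}\Omega B(y)$, and \eqref{Eq: ConnectA} to write $R(x)K^{[n-1]}(x,y)=R(x)A(x)\Pi_{n-1}B(y)=\tilde{A}(x)\Omega\Pi_{n-1}B(y)$. Subtracting these two identities immediately isolates $\tilde{A}(x)(\Omega\Pi_{n-1}-\Pi_{n-1}\Omega)B(y)=\tilde{A}(x)[\Omega,\Pi_{n-1}]B(y)$, which is exactly \eqref{Eq: KernelConex}. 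No restriction on $n$ is required in this step.

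For \eqref{Eq: MixedKernelConex} the reasoning is analogous, but with \eqref{Eq:ConnectD} replacing \eqref{Eq: ConnectB}: multiplying the identity $\tilde{D}(y)R(y)=\Omega D(y)+\int_\Delta \tilde{B}(t)\d\tilde{\mu}(t)\frac{R(y)-R(t)}{y-t}$ on the left by $\tilde{A}(x)\Pi_{n-1}$ yields $\tilde{K}^{[n-1]}_D(x,y)R(y)=\tilde{A}(x)\Pi_{n-1}\Omega D(y)+\int_\Delta \tilde{K}^{[n-1]}(x,t)\d\tilde{\mu}(t)\frac{R(y)-R(t)}{y-t}$, while \eqref{Eq: ConnectA} still gives $R(x)K^{[n-1]}_D(x,y)=\tilde{A}(x)\Omega\Pi_{n-1}D(y)$. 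Subtracting, the commutator $\tilde{A}(x)[\Omega,\Pi_{n-1}]D(y)$ emerges and one is left with the residual integral, which must be identified with $\frac{R(x)-R(y)}{x-y}$.

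This last identification is the only delicate step and is where the hypothesis $n\geq M^R$ enters. By Lemma \ref{Lema: R-BivaMP}, the matrix function $\frac{R(y)-R(t)}{y-t}$ is a polynomial of degree $N^R-1$ in $t$ whose $t$-leading coefficient is $R_{N^R}$; by \eqref{Cond: CondicionesMatricesLideresFinales_1} this leading matrix has precisely the anti-diagonal block form required by Corollary \ref{Col: ProycK_2}. Applying that corollary to $\tilde{K}^{[n-1]}$ with $N=N^R-1$ and the same defect $r$ yields the projection identity $\int_\Delta \tilde{K}^{[n-1]}(x,t)\d\tilde{\mu}(t)\frac{R(y)-R(t)}{y-t}=\frac{R(y)-R(x)}{y-x}$ whenever $n-1\geq (N^R-1)p+p-1-r=M^R-1$, which is exactly $n\geq M^R$. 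After the trivial sign flip $\frac{R(y)-R(x)}{y-x}=\frac{R(x)-R(y)}{x-y}$, rearrangement produces \eqref{Eq: MixedKernelConex}. The main obstacle is thus the careful verification that the projection property applies to this specific bivariate polynomial with the correct threshold; everything else is routine bookkeeping of the connection formulas.
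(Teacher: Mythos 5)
Your proposal is correct and follows essentially the same route as the paper, which proves this proposition by invoking the commutator structure of $\Omega$ together with the projection property of Corollary \ref{Col: ProycK_2} (deferring details to Propositions 3.6 of \cite{Manas_Rojas_Christoffel} and 2.9 of \cite{Manas_Rojas_Geronimus}). Your write-up simply supplies the omitted bookkeeping — transporting $\Omega$ past $\Pi_{n-1}$ via \eqref{Eq: ConnectB}, \eqref{Eq: ConnectA}, \eqref{Eq:ConnectD}, and identifying the residual integral with $\frac{R(x)-R(y)}{x-y}$ via the projection property applied to the degree-$(N^R-1)$ polynomial with leading coefficient $R_{N^R}$, which yields exactly the threshold $n\geq M^R$.
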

\begin{proof}
    The result follows from Proposition \ref{prop: Commutator} and Corollary \ref{Col: ProycK_2}. For additional details, we refer to Proposition 3.6 in \cite{Manas_Rojas_Christoffel} and Proposition 2.9 in \cite{Manas_Rojas_Geronimus}, noting that these references employ slightly different kernel definitions.
\end{proof}
\begin{Definition} \label{Def: NotationK}
    Let us introduce the following functions taking values in $\C^p$:
    \begin{align*}
        \mathbb{K}^{[n-1],(j)}(x) & \coloneq K^{[n-1]}(x,\lambda_j)\vec{\boldsymbol{l}}_j, & \mathbb{K}_D^{[n-1],(i)}(x) &\coloneq K^{[n-1]}_D(x,\rho_i)\vec{\boldsymbol{r}}_i-K^{[n-1]}(x,\rho_i)\boldsymbol{\xi}_i(\rho_i)\cev{\boldsymbol{r}}_iR'(\rho_i)\vec{\boldsymbol{r}}_i,
    \end{align*}
    for $n \in \mathbb{N}$, $i \in \{1,\dots,M^R \}$ and $j \in \{ 1,\dots,M^L \}$.
\end{Definition}
\begin{Proposition} \label{Prop: Kfinal}
    The following formula holds for $n\geq M^R$: 
    \begin{multline*}
        R(x)\begin{bNiceMatrix}
            \mathbb{K}^{[n-1],(1)}(x) & \Cdots & \mathbb{K}^{[n-1],(M^L)}(x) & \mathbb{K}^{[n-1],(1)}_D(x) + \frac{\vec{\boldsymbol{r}}_1}{x-\rho_1} & \Cdots & \mathbb{K}^{[n-1],(M^R)}_D(x) + \frac{\vec{\boldsymbol{r}}_{M^R}}{x-\rho_{M^R}}
        \end{bNiceMatrix} \\ = \tilde{A}_{n,M^L,M^R}(x) \Omega_{n,M^R,M^L} \begin{bNiceMatrix}
            \mathbb{B}_{n-M^R}^{(1)} & \Cdots & \mathbb{B}_{n-M^R}^{(M^L)} & \mathbb{D}_{n-M^R}^{(1)}-\mathbb{W}_{n-M^R}^{(1)} & \Cdots & \mathbb{D}_{n-M^R}^{(M^R)}-\mathbb{W}_{n-M^R}^{(M^R)} \\ \Vdots & & \Vdots & \Vdots & & \Vdots \\ \\\\
            \mathbb{B}_{n+M^L-1}^{(1)} & \Cdots & \mathbb{B}_{n+M^L-1}^{(M^L)} & \mathbb{D}_{n+M^L-1}^{(1)}-\mathbb{W}_{n+M^L-1}^{(1)} & \Cdots & \mathbb{D}_{n+M^L-1}^{(M^R)}-\mathbb{W}_{n+M^L-1}^{(M^R)}
        \end{bNiceMatrix}.
    \end{multline*}
\end{Proposition}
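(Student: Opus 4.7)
The strategy is to derive each column of the claimed matrix identity separately, one set of $M^L$ columns from \eqref{Eq: KernelConex} and the remaining $M^R$ columns from \eqref{Eq: MixedKernelConex}, and then to assemble them using the compact factorization $\tilde A(x)[\Omega,\Pi_{n-1}](\cdot)=\tilde A_{n,M^L,M^R}(x)\,\Omega_{n,M^R,M^L}\,(\cdot)$ established in the preceding Proposition.

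For the first block of columns, I would set $y=\lambda_j$ in \eqref{Eq: KernelConex} and multiply on the right by $\vec{\boldsymbol l}_j$. Since $L(\lambda_j)\vec{\boldsymbol l}_j=0$, the left-hand side vanishes and one reads off
\[
R(x)\,\mathbb K^{[n-1],(j)}(x)=\tilde A(x)[\Omega,\Pi_{n-1}]B(\lambda_j)\vec{\boldsymbol l}_j,
\]
whose right-hand side, by the preceding Proposition together with Definition \ref{Def:NotationBD}, is exactly $\tilde A_{n,M^L,M^R}(x)\,\Omega_{n,M^R,M^L}$ times the column whose entries are $\mathbb B^{(j)}_{n-M^R},\dots,\mathbb B^{(j)}_{n+M^L-1}$. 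Collecting the $j=1,\dots,M^L$ such identities produces the first $M^L$ columns of the claimed matrix formula.

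For the second block of columns the idea is to evaluate \eqref{Eq: MixedKernelConex} at $y=\rho_i$ applied to $\vec{\boldsymbol r}_i$. The right-hand side immediately gives the terms $R(x)K^{[n-1]}_D(x,\rho_i)\vec{\boldsymbol r}_i$, $-\tilde A(x)[\Omega,\Pi_{n-1}]D(\rho_i)\vec{\boldsymbol r}_i$, and the residue $R(x)\vec{\boldsymbol r}_i/(x-\rho_i)$, since $\bigl(R(x)-R(y)\bigr)/(x-y)$ evaluated at $y=\rho_i$ and acting on $\vec{\boldsymbol r}_i$ equals $R(x)\vec{\boldsymbol r}_i/(x-\rho_i)$. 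The delicate point is the left-hand side: splitting $\d\tilde\mu$ into its continuous and discrete parts via \eqref{Eq:SimpleEigMeasure}, and reproducing the limit argument used in the proof of Proposition \ref{Prop: OmegaComp}, one obtains
\[
\lim_{y\to\rho_i}\tilde K^{[n-1]}_D(x,y)R(y)\vec{\boldsymbol r}_i=\tilde K^{[n-1]}(x,\rho_i)L(\rho_i)\boldsymbol\xi_i(\rho_i)\cev{\boldsymbol r}_i R'(\rho_i)\vec{\boldsymbol r}_i,
\]
because every other contribution contains a factor $R(y)\vec{\boldsymbol r}_i$ that vanishes at $y=\rho_i$. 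This is the step where some care is needed to justify passing the limit under the integral.

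To eliminate the $\tilde K^{[n-1]}(x,\rho_i)L(\rho_i)$ factor I would apply \eqref{Eq: KernelConex} at $y=\rho_i$ and multiply on the right by the $q\times 1$ vector $\boldsymbol\xi_i(\rho_i)\cev{\boldsymbol r}_i R'(\rho_i)\vec{\boldsymbol r}_i$. Substituting the resulting expression for $\tilde K^{[n-1]}(x,\rho_i)L(\rho_i)\boldsymbol\xi_i(\rho_i)\cev{\boldsymbol r}_i R'(\rho_i)\vec{\boldsymbol r}_i$ into the limit identity and rearranging yields
\[
R(x)\,\mathbb K^{[n-1],(i)}_D(x)+\frac{R(x)\vec{\boldsymbol r}_i}{x-\rho_i}=\tilde A(x)[\Omega,\Pi_{n-1}]\bigl(D(\rho_i)-B(\rho_i)\boldsymbol\xi_i(\rho_i)\cev{\boldsymbol r}_i R'(\rho_i)\bigr)\vec{\boldsymbol r}_i,
\]
where the left-hand side is recognised as $R(x)\bigl(\mathbb K^{[n-1],(i)}_D(x)+\vec{\boldsymbol r}_i/(x-\rho_i)\bigr)$ by Definition \ref{Def: NotationK}, and the right-hand side, again by Definition \ref{Def:NotationBD} and the preceding Proposition, equals $\tilde A_{n,M^L,M^R}(x)\,\Omega_{n,M^R,M^L}$ acting on the column with entries $\mathbb D^{(i)}_k-\mathbb W^{(i)}_k$, $k=n-M^R,\dots,n+M^L-1$. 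Letting $i$ run over $\{1,\dots,M^R\}$ provides the remaining $M^R$ columns, and horizontal concatenation of both blocks furnishes the claimed identity. The principal obstacle is the careful limit computation for $\tilde K^{[n-1]}_D(x,y)R(y)\vec{\boldsymbol r}_i$; the rest is bookkeeping with the notation introduced above.
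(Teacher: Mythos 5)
Your proposal follows essentially the same route as the paper's proof: the first $M^L$ columns are obtained by applying right eigenvectors of $L$ to \eqref{Eq: KernelConex}, the remaining $M^R$ columns by the limit $y\to\rho_i$ in \eqref{Eq: MixedKernelConex} applied to $\vec{\boldsymbol r}_i$ (your intermediate expression $\tilde K^{[n-1]}(x,\rho_i)L(\rho_i)\boldsymbol\xi_i(\rho_i)\cev{\boldsymbol r}_iR'(\rho_i)\vec{\boldsymbol r}_i$ coincides with the paper's after one application of \eqref{Eq: KernelConex}), and the columns are assembled via the commutator factorization exactly as in the text. The only point left implicit in both your argument and the paper's is the justification of the interchange of limit and integral, which the paper delegates to the analogous computation in Proposition \ref{Prop: OmegaComp}.
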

\begin{proof}
    Applying a right eigenvector of $L(x)$ to Equation \eqref{Eq: KernelConex} and evaluating at the corresponding eigenvalue yields:
    \begin{equation*}
        \tilde{K}^{[n-1]}(x,\lambda_i)L(\lambda_i)\vec{\boldsymbol{l}}_i = R(x) K^{[n-1]}(x,\lambda_i)\vec{\boldsymbol{l}}_i - \tilde{A}(x)\left[ \Omega,\Pi_{n-1} \right]B(\lambda_i)\vec{\boldsymbol{l}}_i.
    \end{equation*}
    The left hand side vanishes identically. Using Definitions \ref{Def:NotationBD} and \ref{Def: NotationK} the equation can be rewritten as:
    \begin{equation} \label{Eq: Kevaluate1}
        R(x)\mathbb{K}^{[n-1],(i)}(x) = \tilde{A}_{n,M^L,M^R}(x) \Omega_{n,M^R,M^L}\begin{bNiceMatrix}
            \mathbb{B}_{n-M^R}^{(i)} \\
            \Vdots \\
            \mathbb{B}_{n-1}^{(i)} \\[4pt]
            \mathbb{B}_{n}^{(i)} \\
            \Vdots \\
            \mathbb{B}_{n+M^L-1}^{(i)} \\
        \end{bNiceMatrix}.
    \end{equation}
    For Equation \eqref{Eq: MixedKernelConex}, we first establish the limit: 
    \begin{equation*}
        \lim_{y\to\rho_i} \tilde{K}^{[n-1]}_D(x,y)R(y)\vec{\boldsymbol{r}}_i = R(x)K^{[n-1]}(x,\rho_i)\boldsymbol{\xi}_i(\rho_i)\cev{\boldsymbol{r}}_iR'(\rho_i)\vec{\boldsymbol{r}}_i-\tilde{A}(x)\left[ \Omega,\Pi_{n-1} \right]B(\rho_i)\boldsymbol{\xi}_i(\rho_i)\cev{\boldsymbol{r}}_iR'(\rho_i)\vec{\boldsymbol{r}}_i,
    \end{equation*}
    The prove of the limit is similar to the limit carried out in Proposition \ref{Prop: OmegaComp} (see Proposition 2.11. in \cite{Manas_Rojas_Geronimus} for further details). Evaluating Equation \eqref{Eq: MixedKernelConex} at $z=\rho_i$ with the right eigenvector gives:   
    \begin{multline*}
        -R(x)\left( K_D^{[n-1]}(x,\rho_i)\vec{\boldsymbol{r}}_i - K^{[n-1]}(x,\rho_i)\boldsymbol{\xi}_i(\rho_i)\cev{\boldsymbol{r}}_iR'(\rho_i)\vec{\boldsymbol{r}}_i + \frac{\vec{\boldsymbol{r}}_i}{x-\rho_i}\right) \\ = \tilde{A}(x)\left[ \Omega,\Pi_{n-1} \right]\left( D(\rho_i)\vec{\boldsymbol{r}}_i - B(\rho_i)\boldsymbol{\xi}_i(\rho_i)\cev{\boldsymbol{r}}_iR'(\rho_i)\vec{\boldsymbol{r}}_i \right).
    \end{multline*}
    Which can be rewritten as: 
    \begin{equation} \label{Eq: Kevaluate2}
        R(x)\left( \mathbb{K}_D^{[n-1]}(x) + \frac{\vec{\boldsymbol{r}}_i}{x-\rho_i} \right) = \tilde{A}_{n,M^L,M^R}(x) \Omega_{n,M^R,M^L}\begin{bNiceMatrix}
            \mathbb{D}_{n-M^R}^{(i)}-\mathbb{W}_{n-M^R}^{(i)} \\
            \Vdots \\\\\\
            \mathbb{D}_{n+M^L-1}^{(i)}-\mathbb{W}_{n+M^L-1}^{(i)}
        \end{bNiceMatrix}.
    \end{equation}
    Combining equations \eqref{Eq: Kevaluate1} and \eqref{Eq: Kevaluate2} for all eigenvalues of $L(x)$ and $R(x)$ yields the desired matrix relation.
\end{proof}
Let us now introduce a key element in our study, the $\tau$-determinants. 
\begin{Definition} \label{Def: Taudeter}
For $n \geq M^R$,   the $\tau$-determinants are defined as follows: 
    \begin{equation*}
        \tau_n \coloneqq \begin{vNiceMatrix}
            \mathbb{B}_{n-M^R}^{(1)} & \Cdots & \mathbb{B}_{n-M^R}^{(M^L)} & \mathbb{D}_{n-M^R}^{(1)}-\mathbb{W}_{n-M^R}^{(1)} & \Cdots & \mathbb{D}_{n-M^R}^{(M^R)}-\mathbb{W}_{n-M^R}^{(M^R)} \\ \Vdots & & \Vdots & \Vdots & & \Vdots \\ \\\\
            \mathbb{B}_{n+M^L-1}^{(1)} & \Cdots & \mathbb{B}_{n+M^L-1}^{(M^L)} & \mathbb{D}_{n+M^L-1}^{(1)}-\mathbb{W}_{n+M^L-1}^{(1)} & \Cdots & \mathbb{D}_{n+M^L-1}^{(M^R)}-\mathbb{W}_{n+M^L-1}^{(M^R)}
        \end{vNiceMatrix}.
    \end{equation*}
\end{Definition}
\begin{Theorem} \label{Th: ChrisFormulas}
    The following Christoffel type formulas hold for $n \geq M^R$
    \begin{multline} \label{Eq: ChrisFormulasA}
        \tilde{A}^{(a)}_{n-1}(x)
        =\frac{1}{\tau_n} \sum_{\tilde{a}=1}^p R_{a,\tilde{a}}(x) \\ \times \begin{vNiceMatrix}
        \mathbb{B}_{n-M^R}^{(1)} & \Cdots & \mathbb{B}_{n-M^R}^{(M^L)} & \mathbb{D}_{n-M^R}^{(1)}-\mathbb{W}_{n-M^R}^{(1)} & \Cdots & \mathbb{D}_{n-M^R}^{(M^R)}-\mathbb{W}_{n-M^R}^{(M^R)} \\ 
        \Vdots & & \Vdots & \Vdots & & \Vdots \\ \\\\
        \mathbb{B}_{n+M^L-2}^{(1)} & \Cdots & \mathbb{B}_{n+M^L-2}^{(M^L)} & \mathbb{D}_{n+M^L-2}^{(1)}-\mathbb{W}_{n+M^L-2}^{(1)} & \Cdots & \mathbb{D}_{n+M^L-2}^{(M^R)}-\mathbb{W}_{n+M^L-2}^{(M^R)}\\[5pt]
        \mathbb{K}_{\tilde{a}}^{[n-1-M^R],(1)}(x) & \Cdots & \mathbb{K}_{\tilde{a}}^{[n-1-M^R],(M^L)}(x) & \mathbb{K}_{D,\tilde{a}}^{[n-1-M^R],(1)}(x) + \frac{\vec{r}_{1,\tilde{a}}}{x-\rho_1} & \Cdots & \mathbb{K}_{D,\tilde{a}}^{[n-1-M^R],(M^R)}(x) + \frac{\vec{r}_{M^R,\tilde{a}}}{x-\rho_{M^R}} 
    \end{vNiceMatrix}, 
    \end{multline}
and
    \begin{equation} \label{Eq: ChrisFormulasB}
        \tilde{B}_n^{(b)}(x) = \frac{1}{\tau_n}\sum_{\tilde{b}=1}^q \begin{vNiceMatrix}
            \mathbb{B}_{n-M^R}^{(1)} & \Cdots & \mathbb{B}_{n-M^R}^{(M^L)} & \mathbb{D}_{n-M^R}^{(1)}-\mathbb{W}_{n-M^R}^{(1)} & \Cdots & \mathbb{D}_{n-M^R}^{(M^R)}-\mathbb{W}_{n-M^R}^{(M^R)} & B_{n-M^R}^{(\tilde{b})}(x) \\ 
            \Vdots & & \Vdots & \Vdots & & \Vdots & \Vdots \\ \\\\
             \mathbb{B}_{n+M^L}^{(1)} & \Cdots & \mathbb{B}_{n+M^L}^{(M^L)} & \mathbb{D}_{n+M^L}^{(1)}-\mathbb{W}_{n+M^L}^{(1)} & \Cdots & \mathbb{D}_{n+M^L}^{(M^R)}-\mathbb{W}_{n+M^L}^{(M^R)} & B_{n+M^L}^{(\tilde{b})}(x) 
        \end{vNiceMatrix} L_{\tilde{b},b}^{-1}(x). 
    \end{equation}
\end{Theorem}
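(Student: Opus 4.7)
The plan is to obtain both formulas as Cramer expansions, exploiting the banded structure of $\Omega$ together with Propositions~\ref{Prop: OmegaComp} and~\ref{Prop: Kfinal}. For~\eqref{Eq: ChrisFormulasB} the starting point is the connection $\Omega B(x)=\tilde B(x)L(x)$ evaluated at row~$n$, which, using $\Omega_{n,n+M^L}=1$ from~\eqref{Cond: CondicionesMatricesLideresFinales_2}, reads
\[
\sum_{k=n-M^R}^{n+M^L-1}\Omega_{n,k}B_k^{(b)}(x)+B_{n+M^L}^{(b)}(x)=\sum_{\tilde b=1}^q \tilde B_n^{(\tilde b)}(x)\,L_{\tilde b,b}(x).
\]
Let $\mathscr T_n$ denote the square matrix of Definition~\ref{Def: Taudeter} (so $\det\mathscr T_n=\tau_n$) and $r_{n+M^L}$ the analogous row at index $n+M^L$. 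Proposition~\ref{Prop: OmegaComp} says that $(\Omega_{n,n-M^R},\dots,\Omega_{n,n+M^L-1})$ solves $v\,\mathscr T_n=-r_{n+M^L}$, so Cramer's rule expresses each $\Omega_{n,k}$ as a signed ratio of determinants obtained by replacing one row of $\mathscr T_n$ by $r_{n+M^L}$. Substituting into the display above and recognising the result as a cofactor expansion along the last column produces the $(M^L+M^R+1)$-square determinant obtained by bordering $\mathscr T_n$ with the column $(B_{n-M^R}^{(b)}(x),\dots,B_{n+M^L}^{(b)}(x))^\intercal$ and the row $r_{n+M^L}$; right-multiplying by $L^{-1}(x)$ then yields~\eqref{Eq: ChrisFormulasB}.

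For~\eqref{Eq: ChrisFormulasA} the key structural observation is that, in the regime $n\ge\max(M^R,M^L)$, the last column of $\Omega_{n,M^R,M^L}$ is the coordinate vector $e_{M^L}$: its upper-right $M^L\times M^L$ block is lower unitriangular (the diagonal ones coming from $\Omega_{k,k+M^L}=1$), so its last column is $(0,\dots,0,1)^\intercal$, while its lower-left block occupies only the first $M^R$ columns. Consequently the last entry of $\tilde A_{n,M^L,M^R}^{(a)}(x)\,\Omega_{n,M^R,M^L}$ is exactly $\tilde A_{n-1}^{(a)}(x)$. Reading row~$a$ of Proposition~\ref{Prop: Kfinal} as the row-vector equation
\[
\bigl[\tilde A_{n,M^L,M^R}^{(a)}(x)\,\Omega_{n,M^R,M^L}\bigr]\,\mathscr T_n=\sum_{\tilde a=1}^p R_{a,\tilde a}(x)\,K^{(\tilde a)}(x),
\]
where $K^{(\tilde a)}(x)$ denotes the $\tilde a$-th row of the kernel block on the left-hand side there, Cramer's rule applied to the last entry yields
\[
\tilde A_{n-1}^{(a)}(x)=\frac{1}{\tau_n}\sum_{\tilde a=1}^p R_{a,\tilde a}(x)\,\det\bigl(\mathscr T_n\ \text{with last row replaced by}\ K^{(\tilde a)}_{[n-1]}(x)\bigr).
\]

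It remains to reduce the kernel index from $[n-1]$ to $[n-1-M^R]$. The splitting
\[
\mathbb K_{\tilde a}^{[n-1],(j)}(x)=\mathbb K_{\tilde a}^{[n-1-M^R],(j)}(x)+\sum_{i=n-M^R}^{n-1}A_i^{(\tilde a)}(x)\,\mathbb B_i^{(j)},
\]
and its analogue for $\mathbb K_{D,\tilde a}$, show that the last row differs from the desired one by an $x$-dependent linear combination of tau-rows at indices $n-M^R,\dots,n-1$ (the pole terms $\vec r_{\cdot,\tilde a}/(x-\rho_\cdot)$ being independent of the truncation level and therefore unaffected). Since $M^L\ge 1$ these indices lie inside $\{n-M^R,\dots,n+M^L-2\}$, so the corresponding rows are already present among the first $M^L+M^R-1$ rows of the matrix; a single elementary row operation removes the difference without changing the determinant, completing~\eqref{Eq: ChrisFormulasA}. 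The main non-routine step is precisely the last-column identification for $\Omega_{n,M^R,M^L}$; it relies crucially on $\Omega_{k,k+M^L}=1$ enforced by~\eqref{Cond: CondicionesMatricesLideresFinales_2}, without which extracting $\tilde A_{n-1}^{(a)}(x)$ would require inverting a triangular subsystem rather than reading off a single component. The boundary regime $M^R\le n<M^L$ follows by the same argument after replacing $\Omega_{n,M^R,M^L}$ by its adjusted form recorded in Remark~\ref{Rm: CommutatorSpecial}.
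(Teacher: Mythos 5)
Your proposal is correct and follows essentially the same route as the paper: formula \eqref{Eq: ChrisFormulasB} from the row-$n$ connection relation $\Omega B(x)=\tilde B(x)L(x)$ combined with Proposition~\ref{Prop: OmegaComp} and a bordered-determinant (Cramer) identity, and formula \eqref{Eq: ChrisFormulasA} from Proposition~\ref{Prop: Kfinal} by extracting the last entry of $\tilde A_{n,M^L,M^R}(x)\Omega_{n,M^R,M^L}$ via the unit last column and then truncating the kernel sum because the rows with indices $n-M^R,\dots,n-1$ already occur in the $\tau$-matrix. The only cosmetic difference is that you phrase the index reduction $[n-1]\to[n-1-M^R]$ as elementary row operations, whereas the paper expands the determinant and observes that the duplicated-row terms vanish; these are the same argument.
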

\begin{proof}
    Since it has been assumed that $\tau_n \neq 0$ for $n\geq M^R$, Proposition \ref{Prop: Kfinal} can be rearranged as:
    \begin{multline*}
        -R(x)\begin{bNiceMatrix}
            \mathbb{K}^{[n-1],(1)}(x) & \Cdots & \mathbb{K}^{[n-1],(M^L)}(x) & \mathbb{K}^{[n-1],(1)}_D(x) + \frac{\vec{\boldsymbol{r}}_{1}}{x-\rho_1} & \Cdots & \mathbb{K}^{[n-1],(M^R)}_D(x) + \frac{\vec{\boldsymbol{r}}_{M^R}}{x-\rho_{M^R}}
        \end{bNiceMatrix} \\ \times \begin{bNiceMatrix}
            \mathbb{B}_{n-M^R}^{(1)} & \Cdots & \mathbb{B}_{n-M^R}^{(M^L)} & \mathbb{D}_{n-M^R}^{(1)}-\mathbb{W}_{n-M^R}^{(1)} & \Cdots & \mathbb{D}_{n-M^R}^{(M^R)}-\mathbb{W}_{n-M^R}^{(M^R)} \\ \Vdots & & \Vdots & \Vdots & & \Vdots \\ \\\\
            \mathbb{B}_{n+M^L-1}^{(1)} & \Cdots & \mathbb{B}_{n+M^L-1}^{(M^L)} & \mathbb{D}_{n+M^L-1}^{(1)}-\mathbb{W}_{n+M^L-1}^{(1)} & \Cdots & \mathbb{D}_{n+M^L-1}^{(M^R)}-\mathbb{W}_{n+M^L-1}^{(M^R)}
        \end{bNiceMatrix}^{-1}= - \tilde{A}_{n,M^L,M^R}(x)\Omega_{n,M^R,M^L} .
    \end{multline*}
    Let us multiply the last equation by the vector $\begin{bNiceMatrix}
        0 & \Cdots & 0 & 1
    \end{bNiceMatrix}^\top\in{\C^{M^L+M^R}}$, yielding: 
    \begin{equation*}
  \tilde{A}_{n,M^L,M^R}(x)\Omega_{n,M^R,M^L} \begin{bNiceMatrix}
        0 \\ \Vdots \\\\\\ 0 \\ 1
    \end{bNiceMatrix} = \begin{bNiceMatrix}
        \tilde{A}^{(1)}_{n-1}(x) \\
        \Vdots \\\\\\
        \tilde{A}^{(p)}_{n-1}(x)
    \end{bNiceMatrix}. 
    \end{equation*}

    Which leads to:
    \begin{multline*}
        -\begin{bNiceMatrix}
        \tilde{A}^{(1)}_{n-1}(x) \\
        \Vdots \\\\\\
        \tilde{A}^{(p)}_{n-1}(x)
    \end{bNiceMatrix}\\ = -R(x)\begin{bNiceMatrix}
            \mathbb{K}^{[n-1],(1)}(x) & \Cdots & \mathbb{K}^{[n-1],(M^L)}(x) & \mathbb{K}^{[n-1],(1)}_D(x) + \frac{\cev{\boldsymbol{r}}_{1}}{x-\rho_1} & \Cdots & \mathbb{K}^{[n-1],(M^R)}_D(x) + \frac{\cev{\boldsymbol{r}}_{M^R}}{x-\rho_{M^R}}
        \end{bNiceMatrix} \\
        \times\begin{bNiceMatrix}
            \mathbb{B}_{n-M^R}^{(1)} & \Cdots & \mathbb{B}_{n-M^R}^{(M^L)} & \mathbb{D}_{n-M^R}^{(1)}-\mathbb{W}_{n-M^R}^{(1)} & \Cdots & \mathbb{D}_{n-M^R}^{(M^R)}-\mathbb{W}_{n-M^R}^{(M^R)} \\ \Vdots & & \Vdots & \Vdots & & \Vdots \\ \\\\
            \mathbb{B}_{n+M^L-1}^{(1)} & \Cdots & \mathbb{B}_{n+M^L-1}^{(M^L)} & \mathbb{D}_{n+M^L-1}^{(1)}-\mathbb{W}_{n+M^L-1}^{(1)} & \Cdots & \mathbb{D}_{n+M^L-1}^{(M^R)}-\mathbb{W}_{n+M^L-1}^{(M^R)}
        \end{bNiceMatrix}^{-1}\begin{bNiceMatrix}
        0 \\ \Vdots \\\\\\ 0 \\ 1
    \end{bNiceMatrix}. 
    \end{multline*}
    Entrywise, we find: 
    \begin{multline*}
        -\tilde{A}_{n-1}^{(a)}(x) = \frac{1}{\tau_n}\sum_{\tilde{a}=1}^pR_{a,\tilde{a}}(x) \\
        \times \begin{vNiceMatrix}
        \mathbb{B}_{n-M^R}^{(1)} & \Cdots & \mathbb{B}_{n-M^R}^{(M^L)} & \mathbb{D}_{n-M^R}^{(1)}-\mathbb{W}_{n-M^R}^{(1)} & \Cdots & \mathbb{D}_{n-M^R}^{(M^R)}-\mathbb{W}_{n-M^R}^{(M^R)} & 0 \\ 
        \Vdots & & \Vdots & \Vdots & & \Vdots & \Vdots \\ \\\\
        \mathbb{B}_{n+M^L-1}^{(1)} & \Cdots & \mathbb{B}_{n+M^L-1}^{(M^L)} & \mathbb{D}_{n+M^L-1}^{(1)}-\mathbb{W}_{n+M^L-1}^{(1)} & \Cdots & \mathbb{D}_{n+M^L-1}^{(M^R)}-\mathbb{W}_{n+M^L-1}^{(M^R)} & 1 \\[5pt]
        \mathbb{K}_{\tilde{a}}^{[n-1],(1)}(x) & \Cdots & \mathbb{K}_{\tilde{a}}^{[n-1],(M^L)}(x) & \mathbb{K}_{D,\tilde{a}}^{[n-1],(1)}(x) + \frac{\vec{r}_{1,\tilde{a}}}{x-\rho_1} & \Cdots & \mathbb{K}_{D,\tilde{a}}^{[n-1],(M^R)}(x) + \frac{\vec{r}_{M^R,\tilde{a}}}{x-\rho_{M^R}}  & 0 
    \end{vNiceMatrix}.
    \end{multline*}
    Using Laplace's expansion for the determinant yields:
    \begin{multline*}
        \tilde{A}_{n-1}^{(a)}(x) = \frac{1}{\tau_n}\sum_{\tilde{a}=1}^pR_{a,\tilde{a}}(x) \\\times  \begin{vNiceMatrix}
            \mathbb{B}_{n-M^R}^{(1)} & \Cdots & \mathbb{B}_{n-M^R}^{(M^L)} & \mathbb{D}_{n-M^R}^{(1)}-\mathbb{W}_{n-M^R}^{(1)} & \Cdots & \mathbb{D}_{n-M^R}^{(M^R)}-\mathbb{W}_{n-M^R}^{(M^R)} \\ 
            \Vdots & & \Vdots & \Vdots & & \Vdots \\ \\\\
            \mathbb{B}_{n+M^L-2}^{(1)} & \Cdots & \mathbb{B}_{n+M^L-2}^{(M^L)} & \mathbb{D}_{n+M^L-2}^{(1)}-\mathbb{W}_{n+M^L-2}^{(1)} & \Cdots & \mathbb{D}_{n+M^L-2}^{(M^R)}-\mathbb{W}_{n+M^L-2}^{(M^R)} \\[5pt]
            \mathbb{K}_{\tilde{a}}^{[n-1],(1)}(x) & \Cdots & \mathbb{K}_{\tilde{a}}^{[n-1],(M^L)}(x) & \mathbb{K}_{D,\tilde{a}}^{[n-1],(1)}(x) + \frac{\vec{r}_{1,\tilde{a}}}{x-\rho_1} & \Cdots & \mathbb{K}_{D,\tilde{a}}^{[n-1],(M^R)}(x) + \frac{\vec{r}_{M^R,\tilde{a}}}{x-\rho_{M^R}}  
        \end{vNiceMatrix}. 
    \end{multline*}
    The determinant can be expanded as: 
    \begin{multline*}
        \tilde{A}_{n-1}^{(a)}(x) = \frac{1}{\tau_n}\sum_{\tilde{a}=1}^pR_{a,\tilde{a}}(x) \\ 
        \times \left( \sum_{i=0}^{n-1} A_i(x)\begin{vNiceMatrix}
        \mathbb{B}_{n-M^R}^{(1)} & \Cdots & \mathbb{B}_{n-M^R}^{(M^L)} & \mathbb{D}_{n-M^R}^{(1)}-\mathbb{W}_{n-M^R}^{(1)} & \Cdots & \mathbb{D}_{n-M^R}^{(M^R)}-\mathbb{W}_{n-M^R}^{(M^R)} \\ 
        \Vdots & & \Vdots & \Vdots & & \Vdots \\ \\
        \mathbb{B}_{n+M^L-2}^{(1)} & \Cdots & \mathbb{B}_{n+M^L-2}^{(M^L)} & \mathbb{D}_{n+M^L-2}^{(1)}-\mathbb{W}_{n+M^L-2}^{(1)} & \Cdots & \mathbb{D}_{n+M^L-2}^{(M^R)}-\mathbb{W}_{n+M^L-2}^{(M^R)} \\[5pt]
        \mathbb{B}_{i}^{(1)} & \Cdots & \mathbb{B}_{i}^{(M^L)} & \mathbb{D}_{i}^{(1)}-\mathbb{W}_{i}^{(1)} & \Cdots & \mathbb{D}_{i}^{(M^R)}-\mathbb{W}_{i}^{(M^R)} 
    \end{vNiceMatrix} \right. \\[3pt]
    \left. + \begin{vNiceMatrix}
        \mathbb{B}_{n-M^R}^{(1)} & \Cdots & \mathbb{B}_{n-M^R}^{(M^L)} & \mathbb{D}_{n-M^R}^{(1)}-\mathbb{W}_{n-M^R}^{(1)} & \Cdots & \mathbb{D}_{n-M^R}^{(M^R)}-\mathbb{W}_{n-M^R}^{(M^R)} \\ 
        \Vdots & & \Vdots & \Vdots & & \Vdots \\ \\
        \mathbb{B}_{n+M^L-2}^{(1)} & \Cdots & \mathbb{B}_{n+M^L-2}^{(M^L)} & \mathbb{D}_{n+M^L-2}^{(1)}-\mathbb{W}_{n+M^L-2}^{(1)} & \Cdots & \mathbb{D}_{n+M^L-2}^{(M^R)}-\mathbb{W}_{n+M^L-2}^{(M^R)} \\[5pt]
        0 & \Cdots & 0 & \frac{\vec{\boldsymbol{r}}_{1}}{x-\rho_1} & \Cdots & \frac{\vec{\boldsymbol{r}}_{M^R}}{x-\rho_{M^R}} 
    \end{vNiceMatrix} \right). 
    \end{multline*}
    The sum truncates when $i \geq n-M^R$ and the desired relation is immediate. 
    
    For the $B(x)$ polynomials, we analyze Equation \eqref{Eq: ConnectB} entrywise:
    \begin{equation*}
        B_{n+M^L}^{(b)}(x) + \begin{bNiceMatrix}
            \Omega_{n,n-M^R} & \Cdots & \Omega_{n,n+M^L-1}
        \end{bNiceMatrix}\begin{bNiceMatrix}
            B_{n-M^R}^{(b)}(x) \\
            \Vdots \\\\\\
            B_{n+M^L-1}^{(b)}(x)
        \end{bNiceMatrix} = \sum_{\tilde{b}=1}^q \tilde{B}_{n}^{(\tilde{b})}(x)L_{\tilde{b},b}(x).
    \end{equation*}
    Using Proposition \ref{Prop: OmegaComp}, the equation can be rewritten as:
    \begin{multline*}
        B_{n+M^L}^{(b)}(x) - \begin{bNiceMatrix}
            \mathbb{B}_{n+M^L}^{(1)} & \Cdots & \mathbb{B}_{n+M^L}^{(M^L)} & \mathbb{D}_{n+M^L}^{(1)}-\mathbb{W}_{n+M^L}^{(1)} & \Cdots & \mathbb{D}_{n+M^L}^{(M^R)}-\mathbb{W}_{n+M^L}^{(M^R)}
        \end{bNiceMatrix} \\ \times \begin{bNiceMatrix}
            \mathbb{B}_{n-M^R}^{(1)} & \Cdots & \mathbb{B}_{n-M^R}^{(M^L)} & \mathbb{D}_{n-M^R}^{(1)}-\mathbb{W}_{n-M^R}^{(1)} & \Cdots & \mathbb{D}_{n-M^R}^{(M^R)}-\mathbb{W}_{n-M^R}^{(M^R)} \\ \Vdots & & \Vdots & \Vdots & & \Vdots \\ \\\\
            \mathbb{B}_{n+M^L-1}^{(1)} & \Cdots & \mathbb{B}_{n+M^L-1}^{(M^L)} & \mathbb{D}_{n+M^L-1}^{(1)}-\mathbb{W}_{n+M^L-1}^{(1)} & \Cdots & \mathbb{D}_{n+M^L-1}^{(M^R)}-\mathbb{W}_{n+M^L-1}^{(M^R)}
        \end{bNiceMatrix}^{-1} \begin{bNiceMatrix}
            B_{n-M^R}^{(b)}(x) \\
            \Vdots \\\\\\
            B_{n+M^L-1}^{(b)}(x)
        \end{bNiceMatrix} \\ = \sum_{\tilde{b}=1}^q \tilde{B}_{n}^{(\tilde{b})}(x)L_{\tilde{b},b}(x). 
    \end{multline*}
    Equivalently, 
    \begin{multline*}
        \sum_{\tilde{b}=1}^q \tilde{B}_{n}^{(\tilde{b})}(x)L_{\tilde{b},b}(x) = \frac{1}{\tau_n}\begin{bNiceMatrix}
            \mathbb{B}_{n-M^R}^{(1)} & \Cdots & \mathbb{B}_{n-M^R}^{(M^L)} & \mathbb{D}_{n-M^R}^{(1)}-\mathbb{W}_{n-M^R}^{(1)} & \Cdots & \mathbb{D}_{n-M^R}^{(M^R)}-\mathbb{W}_{n-M^R}^{(M^R)} & B_{n-M^R}^{(b)}(x) \\ 
            \Vdots & & \Vdots & \Vdots & & \Vdots & \Vdots \\ \\\\
             \mathbb{B}_{n+M^L}^{(1)} & \Cdots & \mathbb{B}_{n+M^L}^{(M^L)} & \mathbb{D}_{n+M^L}^{(1)}-\mathbb{W}_{n+M^L}^{(1)} & \Cdots & \mathbb{D}_{n+M^L}^{(M^R)}-\mathbb{W}_{n+M^L}^{(M^R)} & B_{n+M^L}^{(b)}(x) 
        \end{bNiceMatrix}. 
    \end{multline*}
    from which Equation \eqref{Eq: ChrisFormulasB} follows by inverting $L(x)$. 
\end{proof}
\begin{Remark} \label{Rm: n<M^R}
    Let's study in more detail Proposition \ref{Th: ChrisFormulas} when $n < M^R$. From Equation \eqref{Eq: ConnectB}, we have: 
    \begin{equation*}
        \Omega_{n,0}B_0(x) + \dots + \Omega_{0,M^L-1+n}B_{M^L-1+n}(x) = \tilde{B}_0(x)L(x). 
    \end{equation*}
    There are $M^L+n$ unknowns, corresponding to the $\Omega$ entries. However, the method from Proposition \ref{Prop: OmegaComp} can only determine $M^L$ equations. To see this, recall that the integral term in Equation \eqref{Eq:ConnectD} does not vanish for $n<M^R$. If we act with an eigenvector of $R(x)$ and take the limit $x \to \rho_i$ on the integral term, we find: 
    \begin{equation*}
        \lim_{x\to\rho_i} \left( \int_\Delta \tilde{B}(t)\d \tilde{\mu}(t) \frac{R(x)-R(t)}{x-t} \vec{\boldsymbol{r}}_i \right)^{[n]} = - \lim_{x\to\rho_i} \left( \int_\Delta \tilde{B}(t)\d \tilde{\mu}(t) \frac{R(t)}{x-t} \vec{\boldsymbol{r}}_i \right)^{[n]}. 
    \end{equation*}
    Explicitly decomposing the perturbed measure, we have: 
    \begin{multline*}
        \lim_{x\to\rho_i} \left( \int_\Delta \tilde{B}(t)\d \tilde{\mu}(t) \frac{R(x)-R(t)}{x-t} \vec{\boldsymbol{r}}_i \right)^{[n]} = - \left( \Omega \int_\Delta B(t) \frac{\d \mu (t)}{\rho_i - t} \vec{\boldsymbol{r}}_i \right. \\ \left. + \lim_{x \rightarrow \rho_i }\sum_{j=1}^{M^R} \tilde{B}(t)L(t)\boldsymbol{\xi}_j(t) \cev{\boldsymbol{r}}_j \frac{R(t)}{\rho_i-t}\vec{\boldsymbol{r}}_i \delta(t-\rho_j) \right)^{[n]} = - \left( \Omega D(\rho_i)\vec{\boldsymbol{r}}_i -  \tilde{B}(\rho_i)L(\rho_i)\boldsymbol{\xi}_j(\rho_i) \cev{\boldsymbol{r}}_j R'(\rho_i)\vec{\boldsymbol{r}}_i \right)^{[n]}.
    \end{multline*}
    It is easy to see now that, if we act with an eigenvector of $R(x)$ and take the limit $x \to \rho_i$ on \eqref{Eq:ConnectD}, the system is satisfied trivially and we cannot find the other $n$ unknowns. However, for $n=0$, the system can be solved by acting with the eigenvectors and eigenvalues of $L(x)$ on Equation \eqref{Eq: ConnectB}. One arrives at the following relation: 
   \[ \begin{aligned}
        \tilde{B}^{(b)}_0(x) & = \frac{1}{\tau_0} \sum_{\tilde{b}=1}^q\begin{vNiceMatrix}
            \mathbb{B}_0^{(1)} & \Cdots & \mathbb{B}_0^{(M^L)} & B_0^{(\tilde{b})}(x) \\
            \Vdots & & \Vdots & \Vdots \\\\\\
            \mathbb{B}_{M^L}^{(1)} & \Cdots & \mathbb{B}_{M^L}^{(M^L)} & B_{M^L}^{(\tilde{b})}(x)
        \end{vNiceMatrix} L^{-1}_{\tilde{b},b}(x), & \tau_0 & = \begin{vNiceMatrix}
            \mathbb{B}_0^{(1)} & \Cdots & \mathbb{B}_0^{(M^L)} \\
            \Vdots & & \Vdots \\\\
            \mathbb{B}_{M^L-1}^{(1)} & \Cdots & \mathbb{B}_{M^L-1}^{(M^L)}
        \end{vNiceMatrix}.
    \end{aligned}\]
\end{Remark}
\begin{Theorem} \label{Th: Existence}
   The existence of the Uvarov's perturbed orthogonality imply that $\tau_n \neq 0$ for $n \geq M^R$.
\end{Theorem}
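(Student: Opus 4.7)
The plan is to argue by contraposition: assume the perturbed orthogonality exists and deduce that $\tau_n\neq 0$ for every $n\geq M^R$. Under this hypothesis the Gauss--Borel factorization $\tilde{\mathscr{M}}=\tilde{S}^{-1}\tilde{H}\tilde{\bar S}^{-\top}$ is available with $\tilde H$ invertible diagonal, so the connection matrix $\Omega$ of Definition~\ref{Prop: OmegaDefinition} is well defined and, by the derivation of Proposition~\ref{Prop: OmegaComp}, its row-$n$ entries $(\Omega_{n,n-M^R},\dots,\Omega_{n,n+M^L-1})$ form a solution of the $(M^R+M^L)\times(M^R+M^L)$ linear system whose coefficient matrix has determinant $\tau_n$.

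The first step is to establish the polynomial identity
\[
\tau_n\sum_{\tilde b=1}^{q}\tilde B_n^{(\tilde b)}(x)\,L_{\tilde b,b}(x)=\det\Delta_n^{(b)}(x),
\]
where $\det\Delta_n^{(b)}(x)$ is the $(M^R+M^L+1)\times(M^R+M^L+1)$ augmented determinant appearing in the right-hand side of Equation~\eqref{Eq: ChrisFormulasB}. This identity, which underlies the proof of Theorem~\ref{Th: ChrisFormulas}, actually holds whether or not $\tau_n$ vanishes: it is obtained by substituting the formal Cramer-type expressions $\tau_n\,\Omega_{n,k}=\pm(\text{signed cofactor})$ into the $(n,b)$ entry of $\Omega B(x)=\tilde B(x) L(x)$ and applying Laplace expansion along the final column.

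Next, supposing for contradiction that $\tau_n=0$, the left-hand side vanishes identically in $x$ for every $b\in\{1,\dots,q\}$, forcing $\det\Delta_n^{(b)}(x)\equiv 0$. I would then choose $b$ so that the bottom-right polynomial $B_{n+M^L}^{(b)}(x)$ in the last column of $\Delta_n^{(b)}$ is monic of maximal step-line degree $K$, with $n+M^L=Kq+b-1$. Comparing the coefficient of the top monomial on both sides of the identity $\tau_n\sum_{\tilde b} \tilde B_n^{(\tilde b)}L_{\tilde b,b}=\det\Delta_n^{(b)}$ should then yield an explicit relation of the form $\tau_n\cdot(\text{nonzero factor})=\tilde{H}_{n,n}^{-1}\cdot(\text{nonzero factor})$, where the right-hand side uses the monicity of $\tilde B_n^{(b)}$ and the normalization of $L$ from~\eqref{Cond: CondicionesMatricesLideresFinales_2}; invertibility of $\tilde H$ then contradicts $\tau_n=0$.

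The main obstacle is controlling the step-line degree pattern: distinct entries $B_{k}^{(b)}(x)$ in the last column of $\Delta_n^{(b)}$ may share the same degree, so the choice of $b$ and a careful accounting of leading terms (using both~\eqref{Cond: CondicionesMatricesLideresFinales_1} and~\eqref{Cond: CondicionesMatricesLideresFinales_2}) is needed to isolate the pivot identity linking $\tau_n$ and $\tilde H_{n,n}$. An alternative route that avoids this leading-term bookkeeping is to derive a block Schur-complement identity directly from the two factorizations $\Omega=\tilde S L(\Lambda_{[q]})S^{-1}=\tilde H\tilde{\bar S}^\top R(\Lambda_{[p]}^\top)\bar S^{-\top}H^{-1}$: this would exhibit $\tau_n$ as a principal minor of $\Omega$ whose nonvanishing is inherited from the invertibility of $\tilde H$, $H$, $\tilde{\bar S}$ and $\bar S$, yielding the conclusion at once.
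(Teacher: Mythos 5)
Your Cramer-type identity
\[
\tau_n\sum_{\tilde b=1}^{q}\tilde B_n^{(\tilde b)}(x)\,L_{\tilde b,b}(x)=\det\Delta_n^{(b)}(x)
\]
is indeed valid even when $\tau_n=0$ (the adjugate argument works for any consistent linear system), but the step you build on it fails. Expanding $\det\Delta_n^{(b)}(x)$ along its last column, the cofactor of the bottom entry $B_{n+M^L}^{(b)}(x)$ is exactly $\tau_n$, so the leading coefficient of the right-hand side is $\tau_n$ times the leading coefficient of $B_{n+M^L}^{(b)}$, while the leading coefficient of the left-hand side is $\tau_n$ times the leading coefficient of $\sum_{\tilde b}\tilde B_n^{(\tilde b)}L_{\tilde b,b}$. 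Both sides are proportional to $\tau_n$; setting $\tau_n=0$ turns the comparison of top monomials into $0=0$, not into a relation of the form $\tau_n\cdot(\text{nonzero})=\tilde H_{n,n}^{-1}\cdot(\text{nonzero})$. In fact $\tilde H$ never enters the $B$-side connection formula at all, since both $B_n$ and $\tilde B_n$ are normalized to be monic through the unitriangular factors $S$ and $\tilde S$. The most your identity yields under $\tau_n=0$ is that \emph{all} the maximal minors $m_k$ of the $(M^R+M^L+1)\times(M^R+M^L)$ coefficient block vanish (by linear independence of the vector polynomials $B_k$), which is a rank statement, not yet a contradiction. Your alternative sketch — exhibiting $\tau_n$ as a principal minor of $\Omega$ via a Schur complement of the two factorizations — is unsubstantiated: $\tau_n$ is built from evaluations of $B$ and $D$ at the eigenvalues of $L$ and $R$, not from principal submatrices of $\Omega$, and the invertibility of principal minors of $\Omega$ is precisely the $LU$-factorization hypothesis that the paper must \emph{assume} separately in Theorem~\ref{Th: Existence II}.

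For contrast, the paper's proof runs through a different mechanism. Assuming $\tau_n=0$, it takes a nonzero null vector $\boldsymbol c=\begin{bNiceMatrix}\boldsymbol{c}^L&\boldsymbol{c}^R\end{bNiceMatrix}^\top$ of the $\tau$-matrix and feeds it into the kernel identity of Proposition~\ref{Prop: Kfinal}, obtaining that the row vector of kernels
$\begin{bNiceMatrix}\mathbb{K}^{[n-1],(j)}(z)&\mathbb{K}^{[n-1],(i)}_D(z)+\frac{\vec{\boldsymbol r}_i}{z-\rho_i}\end{bNiceMatrix}$ annihilates $\boldsymbol c$ off the spectrum of $R$. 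A contour integral around each $\rho_i$ then isolates the residue $\vec{\boldsymbol r}_i\,c^R_i$ (the kernel entries being polynomial in $z$), forcing $\boldsymbol c^R=0$; the remaining relation, combined with the linear independence of the $A_i$ and the non-vanishing of $\tau_0$ (needed for $\tilde B_0$ to exist), forces $\boldsymbol c^L=0$, contradicting $\boldsymbol c\neq0$. This analytic residue step is the ingredient your purely algebraic route is missing, and without a substitute for it your argument does not close.
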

\begin{proof}
    If the orthogonality exists and $\tau_n = 0$ for some $n \geq M^R$, then there exists a nonzero 
    vector $\boldsymbol{c}\in\C^{M^R+M^L}$ such that: 
    \begin{equation*}
        \begin{bNiceMatrix}
            \mathbb{B}_{n-M^R}^{(1)} & \Cdots & \mathbb{B}_{n-M^R}^{(M^L)} & \mathbb{D}_{n-M^R}^{(1)}-\mathbb{W}_{n-M^R}^{(1)} & \Cdots & \mathbb{D}_{n-M^R}^{(M^R)}-\mathbb{W}_{n-M^R}^{(M^R)} \\ \Vdots & & \Vdots & \Vdots & & \Vdots \\ \\\\
            \mathbb{B}_{n+M^L-1}^{(1)} & \Cdots & \mathbb{B}_{n+M^L-1}^{(M^L)} & \mathbb{D}_{n+M^L-1}^{(1)}-\mathbb{W}_{n+M^L-1}^{(1)} & \Cdots & \mathbb{D}_{n+M^L-1}^{(M^R)}-\mathbb{W}_{n+M^L-1}^{(M^R)}
        \end{bNiceMatrix} \boldsymbol{c} = 0. 
    \end{equation*}
    Let us write the vector as $\boldsymbol{c}=\begin{bNiceMatrix}
    \boldsymbol{c^L} & \boldsymbol{c^R}
    \end{bNiceMatrix}^\top$, where $\boldsymbol{c^L}$ and $\boldsymbol{c^R}$  have  $M^L$ and $M^R$ entries, respectively. From Proposition \ref{Prop: Kfinal}, we find: 
    \begin{equation*}
        R(z)\begin{bNiceMatrix}
            \mathbb{K}^{[n-1],(1)}(z) & \Cdots & \mathbb{K}^{[n-1],(M^L)}(z) & \mathbb{K}^{[n-1],(1)}_D(z) + \frac{\vec{\boldsymbol{r}}_1}{z-\rho_1} & \Cdots & \mathbb{K}^{[n-1],(M^R)}_D(z) + \frac{\vec{\boldsymbol{r}}_{M^R}}{z-\rho_{M^R}}
        \end{bNiceMatrix}\begin{bNiceMatrix}
            \boldsymbol{c^L} \\ \boldsymbol{c^R}
        \end{bNiceMatrix}=0.
    \end{equation*}
    For points where $z \neq \rho_i$, i.e. $\det R(z) \neq 0$, the expression is equivalent to: 
    \begin{equation} \label{Eq: DemNecesidad}
        \begin{bNiceMatrix}
            \mathbb{K}^{[n-1],(1)}(z) & \Cdots & \mathbb{K}^{[n-1],(M^L)}(z) & \mathbb{K}^{[n-1],(1)}_D(z) + \frac{\vec{\boldsymbol{r}}_1}{z-\rho_1} & \Cdots & \mathbb{K}^{[n-1],(M^R)}_D(z) + \frac{\vec{\boldsymbol{r}}_{M^R}}{z-\rho_{M^R}}
        \end{bNiceMatrix}\begin{bNiceMatrix}
            \boldsymbol{c^L} \\ \boldsymbol{c^R}
        \end{bNiceMatrix}=0.
    \end{equation}
    Let us integrate the last relation in the complex plane using a counterclockwise contour $C_i$ surrounding once $\rho_i$ and not any other eigenvalue. Since $\mathbb{K}^{[n-1],(j)}(z)$ and $\mathbb{K}^{[n-1],(k)}_D(z)$ are polynomial matrices for $j \in \{ 1, \dots , M^L\}$ and $k \in \{ 1, \dots , M^R\}$ and $(z-\rho_j)^{-1}$ is holomorphic inside the contour for $i \neq j$, we have: 
    \begin{multline*}
        \oint_{C_i} \d z \begin{bNiceMatrix}
            \mathbb{K}^{[n-1],(1)}(z) & \Cdots & \mathbb{K}^{[n-1],(M^L)}(z) & \mathbb{K}^{[n-1],(1)}_D(z) + \frac{\vec{\boldsymbol{r}}_1}{z-\rho_1} & \Cdots & \mathbb{K}^{[n-1],(M^R)}_D(z) + \frac{\vec{\boldsymbol{r}}_{M^R}}{z-\rho_{M^R}}
        \end{bNiceMatrix}\begin{bNiceMatrix}
            \boldsymbol{c^L} \\ \boldsymbol{c^R}
        \end{bNiceMatrix} \\ = 2\pi \mathrm{i} \begin{bNiceMatrix}
            0 & \Cdots & 0 & \vec{\boldsymbol{r}}_i & 0 & \Cdots & 0
        \end{bNiceMatrix}\begin{bNiceMatrix}
            \boldsymbol{c^L} \\ \boldsymbol{c^R}
        \end{bNiceMatrix} = \vec{\boldsymbol{r}}_i c^R_i = 0.
    \end{multline*}
    Which can only be true if $c^R_i = 0$. Taking into account all the eigenvalues of $R(x)$ we deduce  tha $\boldsymbol{c}^R = 0$. Simplifying Equation \eqref{Eq: DemNecesidad} yields: 
    \begin{equation*}
        \begin{bNiceMatrix}
            \mathbb{K}^{[n-1],(1)}(z) & \Cdots & \mathbb{K}^{[n-1],(M^L)}(z)
        \end{bNiceMatrix}\boldsymbol{c^L}=0.
    \end{equation*}
    By expanding the last relation, we find: 
    \begin{equation*}
        \sum_{i=0}^{n-1} \begin{bNiceMatrix}
            A_i^{(1)}(x) \\
            \Vdots \\
            A_i^{(p)}(x)
        \end{bNiceMatrix} \left( \mathbb{B}_i^{(1)}c_1^L + \dots + \mathbb{B}_i^{(M^L)}c_{M^L}^L \right) = 0. 
    \end{equation*}
    Due to the linear independence of the vectors $A_i(x)$, the expresion simplifies to: 
\[    \begin{aligned}
        \mathbb{B}_i^{(1)}c_1^L + \dots + \mathbb{B}_i^{(M^L)}c_{M^L}^L & = 0, & i &\in\{ 0, \dots, n-1\} . 
    \end{aligned}\]
    If $n\geq M^L$, we examine the relations for $i \in \{0, \dots M^L-1 \}$. Put together, they read: 
    \begin{equation*}
        \begin{bNiceMatrix}
            \mathbb{B}_0^{(1)} & \Cdots & \mathbb{B}_0^{(M^L)} \\
            \Vdots & & \Vdots \\\\\\
            \mathbb{B}_{M^L-1}^{(1)} & \Cdots & \mathbb{B}_{M^L-1}^{(M^L)}
        \end{bNiceMatrix} \boldsymbol{c^L} = 0, 
    \end{equation*}
    which is equivalent to say that $\tau_0 = 0$ (see remark \ref{Rm: n<M^R}) or that $\boldsymbol{c^L}=0$. The first condition implies that the vector $\tilde{B}_0$ is not defined, so the second must hold. Finally, we get that $\boldsymbol{c} = 0$, which is a contradiction.

    In the case where $M^R \leq n < M^L$, the prove must be slightly changed. On the one hand, we have that: 
 \[   \begin{aligned}
        \mathbb{B}_i^{(1)}c_1^L + \dots + \mathbb{B}_i^{(M^L)}c_{M^L}^L & = 0, & i& \in\{ 0, \dots, n-1\} . 
    \end{aligned}\]
    On the other hand, we find: 
    \begin{equation*}
        \begin{bNiceMatrix}
            \mathbb{B}_{n-M^R}^{(1)} & \Cdots & \mathbb{B}_{n-M^R}^{(M^L)} \\
            \Vdots & & \Vdots \\\\\\
            \mathbb{B}_{n+M^L-1}^{(1)} & \Cdots & \mathbb{B}_{n+M^L-1}^{(M^L)}
        \end{bNiceMatrix} \boldsymbol{c^L} = 0,
    \end{equation*}
    by the initial hypothesis and the fact that $\boldsymbol{c}^R = 0$. Taking into account these relations we can always construct once again the following relation: 
    \begin{equation*}
        \begin{bNiceMatrix}
            \mathbb{B}_0^{(1)} & \Cdots & \mathbb{B}_0^{(M^L)} \\
            \Vdots & & \Vdots \\\\\\\
            \mathbb{B}_{M^L-1}^{(1)} & \Cdots & \mathbb{B}_{M^L-1}^{(M^L)}
        \end{bNiceMatrix} \boldsymbol{c^L} = 0, 
    \end{equation*}
    and the result extends trivially to these cases. 
\end{proof}
        \subsection{Generalizations}\label{S:Uvarov multiple zeros}
In this section we seek to generalize our previous results. Firstly, we will drop the assumption on simple eigenvalues or simple roots of the perturbation matrices. Secondly, given the non-commutative character of the measure matrix, another perturbation can be studied, that is,
\begin{equation*}
    L(x) \d \tilde{\mu}(x) = \d \mu (x) R(x). 
\end{equation*}
Most of the results already proven extend naturally to both cases.
            \subsubsection{Eigenvalues of Arbitrary Multiplicity}
Let's consider the general perturbed measure given in Equation \eqref{Eq:GeneralMeasure}. In this scenario, it does not longer hold that the number of distinct roots of $R(x)$ and $L(x)$ are equal to $M^R$ and $M^L$, respectively. The connection matrix  $\Omega$ defined in Definition \ref{Prop: OmegaDefinition}, needs to be extended to the case by substituting $M^L$ and $M^R$ by $N^Lq-l$ and $N^Rp-r$, respectively. 

In what follows, the notation $\overleftarrow{\frac{\d^l}{\d x^l}}$ indicates that the $l$-th derivative acts on the functions of $x$ to its left; that is, it differentiates only those $x$-dependent factors appearing immediately before the operator:
\[
F(x) \overleftarrow{\dfrac{\d^l}{\d x^l}}G(x)={\dfrac{\d^lF}{\d x^l}}(x)G(x)
\]

At this point, let us introduce some further notation. 
\begin{Definition} \label{Def: NotationGeneralise1}
    We define the following expressions: 
    \begin{align*}
    \mathbb{D}_{n,j}^{(i)}& \coloneq \begin{bNiceMatrix}
        D_n(\rho_i)\vec{\boldsymbol{r}}_{i,j;0} & \Cdots & \displaystyle\sum_{l=0}^{\kappa^{R}_{i,j}-1} \frac{1}{l!} D^{(l)}_n(\rho_i)\vec{\boldsymbol{r}}_{i,j;\kappa^{R}_{i,j}-1-l}
    \end{bNiceMatrix}, \\
    \mathbb{W}_{n,j}^{(i)}& \coloneq \begin{multlined}[t][.75\textwidth]
    	\lim_{x \rightarrow \rho_i}
    \int_{-\infty}^\infty\d t	 \sum_{\tilde{\jmath}=1}^{s^R_i}\sum_{\tilde{k}=0}^{\kappa^{R}_{i,\tilde{\jmath}}-1}\sum_{\tilde{l}=0}^{\tilde{k}} \frac{(-1)^{\tilde{l}}}{\tilde{l}!} B(t) \boldsymbol{\xi}_{i,\tilde{\jmath},\tilde{k}}(t)  \cev{\boldsymbol{r}}_{i,\tilde{\jmath};\tilde{k}-\tilde{l}} \frac{\delta^{(\tilde{l})}(t-\rho_i)R(x)}{(x-t)}\\
    \times	\begin{bNiceMatrix}
        \vec{\boldsymbol{r}}_{i,j,0} & \Cdots &\displaystyle \sum_{l=0}^{\kappa^{R}_{i,j}-1} \frac{1}{l!} \overleftarrow{\dfrac{\d^l}{\d x^l}} \vec{\boldsymbol{r}}_{i,j,\kappa^{R}_{i,j}-1-l}
    \end{bNiceMatrix}, 
    \end{multlined} \\
    \mathbb{B}_{n,j}^{(k)} & \coloneq \begin{bNiceMatrix}
       B_n(\lambda_k)\vec{\boldsymbol{l}}_{k,j;0} & \Cdots & \displaystyle\sum_{l=0}^{\kappa^{L}_{k,j}-1} \frac{1}{l!} B^{(l)}_n(\lambda_k)\vec{\boldsymbol{l}}_{k,j;\kappa^{L}_{k,j}-1-l}
    \end{bNiceMatrix},
\end{align*}
for $n \in \mathbb{N}_0$, $i \in \{ 1, \dots M^R \}$ and $k \in \{ 1, \dots, M^L \}$. Here $\mathbb{D}_{n,j}^{(i)}$ and $\mathbb{W}_{n,j}^{(i)}$ are row vectors with  $\kappa_{i,j}^R$ entries and $\mathbb{B}_{n,j}^{(k)}$ are row vectors with $\kappa_{k,j}^L$ entries. Additionally, we define the vectors: 
\begin{align*}
    \mathbb{D}_n^{(i)}-\mathbb{W}_n^{(i)} & \coloneq \begin{bNiceMatrix}
        \mathbb{D}_{n,1}^{(i)}-\mathbb{W}_{n,1}^{(i)}& \Cdots & \mathbb{D}_{n,s^R_i}^{(i)}-\mathbb{W}_{n,s^R_i}^{(i)} \end{bNiceMatrix}, \\
    \mathbb{B}_n^{(k)} & \coloneq \begin{bNiceMatrix}
        \mathbb{B}_{n,1}^{(k)}& \Cdots & \mathbb{B}_{n,s^L_k}^{(k)} \end{bNiceMatrix},
\end{align*}
which have lengths $K^R_i$ and $K^L_i$, respectively.
\end{Definition}
\begin{Proposition} \label{Prop: LimitExistence}
    The following limit:
    \begin{equation*}
\begin{aligned}
	        &\lim_{x \rightarrow \rho_i} \check{D}(x) R(x) \left( \sum_{l=0}^k \frac{1}{l!}\overleftarrow{\frac{\d^l}{\d x^l}} \vec{\boldsymbol{r}}_{i,j;k-l}\right), & k&\in  \{ 0, \dots, \kappa^{R}_{i,j}-1 \},
\end{aligned}
    \end{equation*}
    exists and depends only on $R(x)$ and its derivatives, the canonical set of Jordan chains associated with the eigenvalue $\rho_i$ (both left and right), the matrix polynomial $\check{B}(x)$, and the vector functions $\boldsymbol{\xi}_{i,j,k}(x)$.
\end{Proposition}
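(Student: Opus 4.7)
The plan is to decompose $\check{D}(x)R(x)$ following the splitting of $\d\check{\mu}$ given in \eqref{Eq:GeneralMeasure}, and then to verify that each piece produces a finite limit at $\rho_i$ once the right Jordan chain operator $\sum_{l=0}^{k}\frac{1}{l!}\overleftarrow{\frac{\d^l}{\d x^l}}\vec{\boldsymbol{r}}_{i,j;k-l}$ is applied. The argument is the natural generalisation of the simple-eigenvalue computation carried out inside the proof of Proposition~\ref{Prop: OmegaComp}.

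First I would write $\check{D}(z)=\check{D}_{\text{cont}}(z)+\check{D}_{\text{disc}}(z)$, with $\check{D}_{\text{cont}}(z)=\int_{\Delta}\check{B}(t)L(t)R^{-1}(t)\frac{\d\mu(t)}{z-t}$ and $\check{D}_{\text{disc}}(z)$ collecting the distributional contributions from the terms $\delta^{(l')}(t-\rho_{i'})$. For the continuous piece I use $R(x)=R(t)+(R(x)-R(t))$ together with Lemma~\ref{Lema: R-BivaMP} to obtain
\[
\check{D}_{\text{cont}}(x)R(x)=\int_\Delta \check{B}(t)L(t)\frac{\d\mu(t)}{x-t}+\int_\Delta \check{B}(t)L(t)R^{-1}(t)\frac{R(x)-R(t)}{x-t}\,\d\mu(t),
\]
both summands being smooth at $x=\rho_i$ under the local-integrability hypothesis of Remark~\ref{Remark: locally integrable}; hence the Jordan chain operator acts on a smooth function, yielding a finite limit that manifestly depends only on the announced data.

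For $\check{D}_{\text{disc}}(x)R(x)$ the contributions associated with $\rho_{i'}\neq\rho_i$ are holomorphic near $\rho_i$, whereas integration against $\delta^{(l')}(t-\rho_i)$ produces a Laurent expansion
\[
\check{D}_{\text{disc}}(x)R(x)=\sum_{m\ge 1}\frac{C_m(x)}{(x-\rho_i)^m}+H(x),\qquad H\text{ holomorphic at }\rho_i,
\]
where each matrix polynomial $C_m(x)$ is an explicit algebraic combination of $\check{B}^{(s)}(\rho_i)$, $L^{(s)}(\rho_i)$, $\boldsymbol{\xi}_{i,j',k'}^{(s)}(\rho_i)$ and the left chain vectors $\cev{\boldsymbol{r}}_{i,j';k'-l'}$. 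The crucial identification, obtained from the Leibniz rule, is that
\[
\sum_{l=0}^{k}\frac{1}{l!}[\check{D}(x)R(x)]^{(l)}\vec{\boldsymbol{r}}_{i,j;k-l}
\]
coincides with the $(x-\rho_i)^k$-coefficient in the Taylor expansion at $\rho_i$ of $\check{D}(x)R(x)v_j(x)$, where $v_j(x):=\sum_{m=0}^{\kappa^{R}_{i,j}-1}(x-\rho_i)^m\vec{\boldsymbol{r}}_{i,j;m}$. The right Jordan chain relation is equivalent to $R(x)v_j(x)=O\bigl((x-\rho_i)^{\kappa^{R}_{i,j}}\bigr)$, which cancels the principal part of $\check{D}(x)R(x)$ along the $j$-th Jordan string and leaves a function holomorphic at $\rho_i$ whose $k$-th Taylor coefficient is the desired finite limit.

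The hard part is the combinatorial bookkeeping in this last step: one must verify that the pole order of $\check{D}_{\text{disc}}(x)$ restricted to the $j$-th Jordan string is bounded by $\kappa^R_{i,j}$, and then follow carefully how the derivatives of $\check{B}$, $L$ and $\boldsymbol{\xi}_{i,j,k}$ at $\rho_i$ combine with both left and right chain vectors. This tracking is precisely what produces the closed expression encoded by $\mathbb{W}_{n,j}^{(i)}$ in Definition~\ref{Def: NotationGeneralise1}, and once carried out it both establishes existence of the limit and confirms its dependence on exactly the data listed in the statement.
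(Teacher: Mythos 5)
The paper does not prove this statement in the text: its ``proof'' is a pointer to Proposition~2.7 of \cite{Manas_Rojas_Geronimus}. Your strategy---split $\check{D}$ according to the decomposition \eqref{Eq:GeneralMeasure}, treat the continuous part via $R(x)=R(t)+(R(x)-R(t))$ and Lemma~\ref{Lema: R-BivaMP}, and recast the chain operator $\sum_{l=0}^{k}\frac{1}{l!}\overleftarrow{\frac{\d^l}{\d x^l}}\vec{\boldsymbol{r}}_{i,j;k-l}$ as the $k$-th Taylor coefficient of $\check{D}(x)R(x)v_j(x)$ with $v_j(x)=\sum_m(x-\rho_i)^m\vec{\boldsymbol{r}}_{i,j;m}$, so that the right Jordan chain condition becomes $R(x)v_j(x)=O\bigl((x-\rho_i)^{\kappa^R_{i,j}}\bigr)$---is the natural generalisation of the simple-eigenvalue computation in Proposition~\ref{Prop: OmegaComp}, and that reformulation of Jordan chains is correct and clean.

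However, there is a genuine gap at the decisive step, which you label ``combinatorial bookkeeping'' and then do not carry out. The pole order of $\check{D}_{\mathrm{disc}}(x)$ at $\rho_i$ is governed by $\max_{j'}\kappa^R_{i,j'}$, not by $\kappa^R_{i,j}$: the delta derivatives $\delta^{(l')}(t-\rho_i)$ with $l'\le\kappa^R_{i,j'}-1$ from \emph{other} Jordan strings $j'$ produce poles of order up to $\kappa^R_{i,j'}$, which may strictly exceed $\kappa^R_{i,j}$. In that case $R(x)v_j(x)=O\bigl((x-\rho_i)^{\kappa^R_{i,j}}\bigr)$ alone does not cancel the principal part; one must additionally show that the left chain covectors $\cev{\boldsymbol{r}}_{i,j';m}$ sitting in the singular part annihilate the Taylor coefficients of $R(x)v_j(x)$ of orders $\kappa^R_{i,j},\dots,\kappa^R_{i,j'}-1$. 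This is a nontrivial compatibility property of a \emph{canonical} pair of left and right Jordan chain systems (essentially the local structure of $R^{-1}(x)$ near $\rho_i$ coming from the Smith form of Proposition~\ref{SmithForm}), and proving it is precisely the content of the proposition rather than an afterthought; your sketch asserts the conclusion without establishing it. A secondary caveat: the claim that $\check{D}_{\mathrm{cont}}(x)R(x)$ is smooth at $\rho_i$ uses holomorphy of the Cauchy transform off $\Delta$, which fails when $\rho_i\in\Delta$---a situation the paper explicitly admits (Remark~\ref{Remark: locally integrable} and the Jacobi--Pi\~neiro example)---though the paper's own limit computations share this imprecision.
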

\begin{proof}
    See Proposition 2.7 in \cite{Manas_Rojas_Geronimus}.
\end{proof}
\begin{Proposition}
    Proposition \ref{Prop: OmegaComp} holds in this scenario for $n \geq N^Rp-r$, simply by introducing the notation mentioned in Definition \ref{Def: NotationGeneralise1}.
\end{Proposition}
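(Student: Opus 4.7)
The plan is to emulate the proof of Proposition \ref{Prop: OmegaComp}, replacing each scalar evaluation at a simple eigenvalue by a differentiation cascade along the corresponding Jordan chain. The target linear system must have $M^L+M^R=(N^Lq-l)+(N^Rp-r)$ equations for the unknowns $\{\Omega_{n,n-M^R},\dots,\Omega_{n,n+M^L-1}\}$; organized by Jordan chain, these split into block columns $\mathbb{B}_n^{(k)}$ (of length $K^L_k=\sum_j\kappa^L_{k,j}$) and $\mathbb{D}^{(i)}_n-\mathbb{W}^{(i)}_n$ (of length $K^R_i=\sum_j\kappa^R_{i,j}$) exactly as assembled in Definition \ref{Def: NotationGeneralise1}, so the combinatorial bookkeeping matches the size of Proposition \ref{Prop: OmegaComp} before any computation is carried out.

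For the $L$-side I would start from the connection formula \eqref{Eq: ConnectB}, differentiate both sides $l$ times at $x=\lambda_k$, multiply on the right by the generalized eigenvector $\vec{\boldsymbol{l}}_{k,j;a-l}$, and sum $l=0,\dots,a$ for each $a\in\{0,\dots,\kappa^L_{k,j}-1\}$. A re-indexing of the Leibniz expansion of $\tilde{B}(x)L(x)$ yields
\[
\sum_{l=0}^{a}\frac{1}{l!}(\tilde{B}L)^{(l)}(\lambda_k)\vec{\boldsymbol{l}}_{k,j;a-l}=\sum_{m=0}^{a}\frac{1}{m!}\tilde{B}^{(m)}(\lambda_k)\sum_{s=0}^{a-m}\frac{1}{s!}L^{(s)}(\lambda_k)\vec{\boldsymbol{l}}_{k,j;a-m-s},
\]
and the inner sum is annihilated by the defining relation of a right Jordan chain of $L$. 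What remains is $\Omega\sum_{l=0}^{a}\frac{1}{l!}B^{(l)}(\lambda_k)\vec{\boldsymbol{l}}_{k,j;a-l}=0$; projecting onto the row indexed by $n$, where $\Omega$ only touches the columns $n-M^R,\dots,n+M^L-1$, reproduces precisely the block columns $\mathbb{B}_n^{(k)}$ of the proposition. Running $a$, $j$ and $k$ through their ranges yields $M^L$ equations.

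For the $R$-side I would work with \eqref{Eq:ConnectD}, differentiate $l$ times in $z$ at $z=\rho_i$, multiply on the right by $\vec{\boldsymbol{r}}_{i,j;\kappa^R_{i,j}-1-l}$ and sum. The analogue of $R(\rho_i)\vec{\boldsymbol{r}}_i=0$ is now the full Jordan chain relation, so the polynomial factors $R^{(l)}(\rho_i)\vec{\boldsymbol{r}}_{i,j;\,\cdot\,}$ telescope along the chain exactly as in the $L$-side computation above. The integral remainder $\int\tilde{B}(t)\d\tilde{\mu}(t)(R(z)-R(t))/(z-t)$ does not simply vanish at $z=\rho_i$, because the absolutely continuous piece $L(t)\d\mu(t)R^{-1}(t)$ is singular at $t=\rho_i$ and the distributional terms in \eqref{Eq:GeneralMeasure} contribute $\delta^{(l)}$-type masses there; Proposition \ref{Prop: LimitExistence} is precisely the statement that the relevant limit exists and is expressible in terms of $R(x)$, its derivatives, the Jordan chain and the vector functions $\boldsymbol{\xi}_{i,j,k}(x)$. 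Substituting that expansion yields $M^R$ further equations, organized into the columns $\mathbb{D}^{(i)}_{n,j}-\mathbb{W}^{(i)}_{n,j}$ of Definition \ref{Def: NotationGeneralise1}.

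The main obstacle is the bookkeeping of this $R$-side limit: one must separate cleanly the rational contribution $L(x)\d\mu(x)R^{-1}(x)$, which produces the $\mathbb{D}$-entries, from the distributional sum $\sum L(x)\boldsymbol{\xi}_{i,j,k}(x)\cev{\boldsymbol{r}}_{i,j;k-l}\delta^{(l)}(x-\rho_i)$, which produces the $\mathbb{W}$-entries, and verify that the arrow-over-derivative pairing $\sum_{l=0}^{\kappa^R_{i,j}-1}\tfrac{1}{l!}\overleftarrow{\d^l/\d x^l}\vec{\boldsymbol{r}}_{i,j;\kappa^R_{i,j}-1-l}$ employed in Definition \ref{Def: NotationGeneralise1} truly matches the action obtained after differentiation and Jordan-chain re-summation. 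Once this correspondence is established chain by chain, concatenating the $M^L+M^R$ equations in the block order dictated by Definition \ref{Def: NotationGeneralise1} reproduces verbatim the matrix identity of Proposition \ref{Prop: OmegaComp}, completing the extension to the case $n\geq N^Rp-r$.
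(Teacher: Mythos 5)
Your $L$-side argument coincides with the paper's: contracting \eqref{Eq: ConnectB} with the sums $\sum_{l}\frac{1}{l!}\overleftarrow{\frac{\d^l}{\d x^l}}\vec{\boldsymbol l}_{k,j;\cdot}$ along each right Jordan chain of $L$ annihilates the right-hand side through the Leibniz/Jordan-chain cancellation and leaves $(\Omega\,\mathbb{B}^{(k)})_n=0$, which gives the first $N^Lq-l$ equations. The $R$-side, however, contains a concrete error. The integral remainder $\int_\Delta\tilde B(t)\,\d\tilde\mu(t)\,\frac{R(z)-R(t)}{z-t}$ in \eqref{Eq:ConnectD} is \emph{not} the delicate term: by Lemma \ref{Lema: R-BivaMP} the factor $\frac{R(z)-R(t)}{z-t}$ is a matrix polynomial of degree $N^R-1$ in $t$, so the row indexed by $n$ of this integral vanishes identically in $z$ for $n\geq N^Rp-r$ by the orthogonality relations of $\tilde B_n$ with respect to the full perturbed functional $\d\tilde\mu$ (delta masses included) --- exactly as in the simple-eigenvalue proof and as the paper asserts. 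The singularity of $L(t)\,\d\mu(t)\,R^{-1}(t)$ at $t=\rho_i$ and the $\delta^{(l)}$ contributions are irrelevant to this term, because the orthogonality is stated against $\d\tilde\mu$ itself; your claim that this term survives and is the source of the $\mathbb{W}$-entries is incorrect.

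The term that genuinely requires care is the left-hand side $\tilde D(z)R(z)$ contracted against the chain: $\tilde D(z)$ is singular at $z=\rho_i$ precisely because of the discrete part of $\d\tilde\mu$, and only the product with $R(z)$ applied to $\sum_{l}\frac{1}{l!}\overleftarrow{\frac{\d^l}{\d z^l}}\vec{\boldsymbol r}_{i,j;\kappa^R_{i,j}-1-l}$ admits a finite limit as $z\to\rho_i$. That is the content of Proposition \ref{Prop: LimitExistence}, and that limit --- rewritten through \eqref{Eq: ConnectB} so that $\tilde B L$ becomes $\Omega B$ --- produces the $\mathbb{W}$-entries. The $\mathbb{D}$-entries come from the remaining term $\Omega D(z)$ on the right-hand side, i.e.\ from the \emph{unperturbed} Cauchy transform evaluated along the chain, not from the rational part of $\d\tilde\mu$ as you suggest. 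With the roles of these three terms reassigned, your differentiation-along-chains plan reproduces the paper's proof; as written, the $R$-side bookkeeping would not close.
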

\begin{proof}
Proposition \ref{Prop: OmegaComp} extends to this case in the following way. For $n \geq N^Rp-r$ and using Equation \eqref{Eq: ConnectB}, we get,
\begin{equation*}
    \Omega B(x) \begin{bNiceMatrix}
        \vec{\boldsymbol{l}}_{k,j;0} & \Cdots & \displaystyle\sum_{l=0}^{\kappa^{L}_{k,j}-1} \frac{1}{l!} \overleftarrow{\dfrac{\d^l}{\d x^l}}\vec{\boldsymbol{l}}_{k,j;\kappa^{L}_{k,j}-1-l}
    \end{bNiceMatrix}_{x = \lambda_k} = \tilde{B}(x) L(x) \begin{bNiceMatrix}
        \vec{\boldsymbol{l}}_{k,j;0} & \Cdots & \displaystyle\sum_{l=0}^{\kappa^{L}_{k,j}-1} \frac{1}{l!} \overleftarrow{\dfrac{\d^l}{\d x^l}}\vec{\boldsymbol{l}}_{k,j;\kappa^{L}_{k,j}-1-l}
    \end{bNiceMatrix}_{x = \lambda_k} = 0,
\end{equation*}
and rearranging the equations for the different $s^R_i$, we have,
\begin{equation*}
    \left( \Omega \begin{bNiceMatrix}
        \mathbb{B}^{(k)}_{1} & \Cdots & \mathbb{B}^{(k)}_{s_k^L}
    \end{bNiceMatrix} \right)_n = \left( \Omega \mathbb{B}^{(k)} \right)_n =0. 
\end{equation*}
In Equation \eqref{Eq:ConnectD}, we proceed similarly. The integral term vanishes for $n \geq N^Rp-r$, we then find,  
\begin{equation*}
    \lim_{x \rightarrow \rho_i} \tilde{D}(x)R(x)\begin{bNiceMatrix}
        \vec{\boldsymbol{r}}_{i,j;0} & \Cdots & \displaystyle\sum_{l=0}^{\kappa^{R}_{i,j}-1} \frac{1}{l!} \overleftarrow{\dfrac{\d^l}{\d x^l}}\vec{\boldsymbol{r}}_{i,j;\kappa^{R}_{i,j}-1-l}
    \end{bNiceMatrix} = \Omega D(x)\begin{bNiceMatrix}
        \vec{\boldsymbol{r}}_{i,j;0} & \Cdots & \displaystyle\sum_{l=0}^{\kappa^{R}_{i,j}-1} \frac{1}{l!} \overleftarrow{\dfrac{\d^l}{\d x^l}}\vec{\boldsymbol{r}}_{i,j;\kappa^{R}_{i,j}-1-l}
    \end{bNiceMatrix}_{x = \rho_i}. 
\end{equation*}
The limit on the right hand side exists, see Proposition \ref{Prop: LimitExistence}, therefore, the last relation can be expressesed as: 
\begin{equation*}
    \left( \Omega \begin{bNiceMatrix}
        \mathbb{D}_{1}^{(i)}-\mathbb{W}_{1}^{(i)}& \Cdots & \mathbb{D}_{s^R_i}^{(i)}-\mathbb{W}_{s^R_i}^{(i)} \end{bNiceMatrix} \right)_n =  \left( \Omega \left[ \mathbb{D}^{(i)}-\mathbb{W}^{(i)} \right] \right)_n = 0. 
\end{equation*}
Taking into account the different roots of $\det L(x)$ and $\det R(x)$, we would find a linear system of $N^Lq-l+N^Rp-r$ equations for the same number of unknowns. 
\end{proof}
\begin{Definition} \label{Def: NotationGeneralise2}
    We also introduce the vectors: 
\[    \begin{aligned}
        \mathbb{K}^{[n],(k)}(x) & \coloneq \sum_{j=0}^{n}A_j(x) \mathbb{B}_j^{(k)}, &  \mathbb{K}^{[n],(i)}_D(x) & \coloneq \sum_{j=0}^{n}A_j(x) \left( \mathbb{D}_j^{(i)}-\mathbb{W}_j^{(i)} \right),
    \end{aligned}\]
    for $n \in \mathbb{N}_0$, $i \in \{ 1, \dots M^R \}$ and $k \in \{ 1, \dots, M^L \}$. Where $\mathbb{K}^{[n],(k)}(x)$ and  $\mathbb{K}_D^{[n],(i)}(x)$ are row vectors with $K^L_k$  and  $K^R_i$ entries, respectively. We will also make use of the following vectors: 
    \begin{align*}
    \mathbb{P}_j^{(i)}(x) & \coloneq \begin{bNiceMatrix}
        \dfrac{\vec{\boldsymbol{r}}_{i,j;0}}{x-\rho_i} & \Cdots & \displaystyle\sum_{l=0}^{\kappa^R_{i,j}-1} \frac{1}{l!} \frac{\d^l}{\d x^l} \left( \frac{1}{x-\rho_i} \right) \vec{\boldsymbol{r}}_{i,j;\kappa^R_{i,j}-1-l}
    \end{bNiceMatrix}, \\
    \mathbb{P}^{(i)}(x) & \coloneq \begin{bNiceMatrix}
        \mathbb{P}_1^{(1)}(x) & \Cdots & \mathbb{P}_{s^R_i}^{(i)}(x)
    \end{bNiceMatrix},
\end{align*}
for $i \in \{ 1, \dots, M^R \}$. In this case, $\mathbb{P}_j^{(i)}(x)$ and $\mathbb{P}^{(i)}(x)$ row vectors with $\kappa_{i,j}^R$ and $K_i^R$ entries, respectively.
\end{Definition}
Another crucial definition of our study are the $\tau$-determinants. Let us see how these determinants extend to this case.
\begin{Definition}
    The $\tau$-determinants can be expressed as:
    \begin{equation*}
    \tau_n \coloneqq \begin{vNiceMatrix}
            \mathbb{B}_{n-N^Rp+r}^{(1)} & \Cdots & \mathbb{B}_{n-N^Rp+r}^{(N^Lq-l)} & \mathbb{D}_{n-N^Rp+r}^{(1)}-\mathbb{W}_{n-N^Rp+r}^{(1)} & \Cdots & \mathbb{D}_{n-N^Rp+r}^{(N^Rp-r)}-\mathbb{W}_{n-N^Rp+r}^{(N^Rp-r)} \\ \Vdots & & \Vdots & \Vdots & & \Vdots \\ \\\\
            \mathbb{B}_{n+N^Lq-l-1}^{(1)} & \Cdots & \mathbb{B}_{n+N^Lq-l-1}^{(N^Lq-l)} & \mathbb{D}_{n+N^Lq-l-1}^{(1)}-\mathbb{W}_{n+N^Lq-l-1}^{(1)} & \Cdots & \mathbb{D}_{n+N^Lq-l-1}^{(N^Rp-r)}-\mathbb{W}_{n+N^Lq-l-1}^{(N^Rp-r)}
        \end{vNiceMatrix},
    \end{equation*}
    which holds for $n \geq N^Rp-r$.
\end{Definition}
\begin{Theorem}
    The Christoffel formulas hold for $n \geq N^Rp-r$. Equation \eqref{Eq: ChrisFormulasB} is extended by introducing the notation in \ref{Def: NotationGeneralise1} and \ref{Def: NotationGeneralise2} and Equation \eqref{Eq: ChrisFormulasA} takes the form: 
    \[\hspace{-1.5cm}
    \begin{multlined}[t][\textwidth]
        \tilde{A}_{n-1}^{(a)}(x) = \frac{1}{\tau_n}\sum_{\tilde{a}=1}^pR_{a,\tilde{a}}(x) \\ \times { \begin{vNiceMatrix}
        \mathbb{B}_{n-N^Rp+r}^{(1)} & \Cdots & \mathbb{B}_{n-N^Rp+r}^{(M^L)} & \mathbb{D}_{n-N^Rp+r}^{(1)}-\mathbb{W}_{n-N^Rp+r}^{(1)} & \Cdots & \mathbb{D}_{n-N^Rp+r}^{(M^R)}-\mathbb{W}_{n-N^Rp+r}^{(M^R)} \\ 
        \Vdots & & \Vdots & \Vdots & & \Vdots \\ \\\\
        \mathbb{B}_{n+N^Lq-l-2}^{(1)} & \Cdots & \mathbb{B}_{n+N^Lq-l-2}^{(M^L)} & \mathbb{D}_{n+N^Lq-l-2}^{(1)}-\mathbb{W}_{n+N^Lq-l-2}^{(1)} & \Cdots & \mathbb{D}_{n+N^Lq-l-2}^{(M^R)}-\mathbb{W}_{n+N^Lq-l-2}^{(M^R)}\\[4pt]
        \mathbb{K}^{[n-1-N^Rp+r],(1)}(x) & \Cdots & \mathbb{K}^{[n-1-N^Rp+r],(M^L)}(x) & \mathbb{K}^{[n-1-N^Rp+r],(1)}_D(x) + \mathbb{P}^{(1)} & \Cdots & \mathbb{K}^{[n-1-N^Rp+r],(M^R)}_D(x) + \mathbb{P}^{(M^R)}
    \end{vNiceMatrix}}. 
    \end{multlined}\]
\end{Theorem}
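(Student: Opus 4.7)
The plan is to mirror the proof of Theorem~\ref{Th: ChrisFormulas} step by step, replacing each eigenvector acting on an eigenvalue by the full canonical set of left or right Jordan chains and using the extended definitions of $\mathbb{B}^{(k)}_n$, $\mathbb{D}^{(i)}_n$ and $\mathbb{W}^{(i)}_n$ introduced in Definition~\ref{Def: NotationGeneralise1}. The starting point is the pair of connection identities \eqref{Eq: ConnectB} and \eqref{Eq: ConnectA}, together with the kernel connection formulas of Proposition~\ref{Prop: KernelConex}, which are valid in this generality because only the bimodule structure of the matrix polynomials was used in their derivation; the only novelty is that the matrix $\Omega$ is now banded with $N^Lq-l$ superdiagonals (ending in $1$'s) and $N^Rp-r$ subdiagonals, as already noted.

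For the formula giving $\tilde A_{n-1}^{(a)}(x)$, I would first upgrade Proposition~\ref{Prop: Kfinal} to the general setting. Applying to \eqref{Eq: KernelConex} the right Jordan chains of $L(x)$ at $\lambda_k$ via the operator $\overleftarrow{\d^l/\d x^l}$ and collecting the resulting columns as in Definition~\ref{Def: NotationGeneralise1} yields $R(x)\mathbb{K}^{[n-1],(k)}(x)=\tilde A_{n,M^L,M^R}(x)\Omega_{n,M^R,M^L}\,(\mathbb{B}^{(k)})_{[n-N^Rp+r,\,n+N^Lq-l-1]}$. For the right Jordan chains of $R(x)$ at $\rho_i$ one proceeds analogously on \eqref{Eq: MixedKernelConex}, with the key technical step being the existence and evaluation of the limit
\[
\lim_{y\to\rho_i}\tilde K^{[n-1]}_D(x,y)R(y)\Bigl(\sum_{l=0}^{\kappa^R_{i,j}-1}\tfrac{1}{l!}\overleftarrow{\tfrac{\d^l}{\d x^l}}\vec{\boldsymbol r}_{i,j;\kappa^R_{i,j}-1-l}\Bigr),
\]
which is guaranteed by Proposition~\ref{Prop: LimitExistence}. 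This produces on the right the terms $\mathbb{K}^{[n-1],(i)}_D(x)+\mathbb{P}^{(i)}(x)$ from Definition~\ref{Def: NotationGeneralise2}, and on the left the perturbation data $\mathbb{D}^{(i)}-\mathbb{W}^{(i)}$.

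Collecting all left Jordan chains of $L$ and right Jordan chains of $R$ yields a matrix identity of the form $R(x)\bigl[\mathbb{K}^{[n-1],(k)}\mid \mathbb{K}^{[n-1],(i)}_D+\mathbb{P}^{(i)}\bigr]=\tilde A_{n,M^L,M^R}(x)\,\Omega_{n,M^R,M^L}\,\mathcal{T}_n$, where $\mathcal{T}_n$ is precisely the matrix whose determinant is $\tau_n$. Assuming $\tau_n\neq 0$ for $n\ge N^Rp-r$, I would invert $\mathcal{T}_n$ and extract $\tilde A_{n-1}^{(a)}(x)$ by multiplication with the column vector $(0,\dots,0,1)^\top$, exactly as in the proof of Theorem~\ref{Th: ChrisFormulas}. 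Cramer's rule together with Laplace expansion along the bottom row then produces the stated bordered determinant, with the truncation of the kernel sum to the index $n-1-N^Rp+r$ following from the rank of the rightmost columns.

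For the extension of \eqref{Eq: ChrisFormulasB}, I would read off the entrywise version of \eqref{Eq: ConnectB} at row $n$ and substitute the generalized solution of the linear system for the row $(\Omega_{n,n-N^Rp+r},\dots,\Omega_{n,n+N^Lq-l-1})$; the resulting expression is a ratio of a bordered determinant to $\tau_n$, from which the formula follows by inverting $L(x)$. I expect the main obstacle to be purely bookkeeping: namely, tracking the blocks of lengths $\kappa^{R}_{i,j}$ and $\kappa^{L}_{k,j}$ coming from each Jordan chain and verifying that the orders in which derivatives and eigenvalue substitutions are applied commute with the limits defining $\mathbb{W}^{(i)}_n$; once Proposition~\ref{Prop: LimitExistence} is invoked, the algebraic structure reproduces that of the simple-eigenvalue case verbatim.
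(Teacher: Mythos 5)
Your proposal is correct and follows essentially the same route as the paper: the paper's own proof simply declares the computations analogous to the simple-eigenvalue case and isolates the one genuinely new ingredient, namely that $\frac{R(x)-R(y)}{x-y}$ applied to a right Jordan chain of $R$ at $\rho_i$ evaluates to $R(x)\,\mathbb{P}^{(i)}(x)$, which is exactly where you locate the appearance of the $\mathbb{P}^{(i)}$ terms via Proposition~\ref{Prop: LimitExistence} and Equation~\eqref{Eq: MixedKernelConex}. Your more detailed bookkeeping of the Jordan-chain blocks is consistent with, and fills in, what the paper leaves implicit.
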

\begin{proof}
    Proposition \ref{Prop: KernelConex} holds, and the computations to arrive at the stated result is analogue to the ones already carried out. However, the term, 
    \begin{equation*}
        \frac{R(x) - R(y)}{x-y},
    \end{equation*}
    in Equation \eqref{Eq: MixedKernelConex}, needs extra consideration. We have,
    \begin{multline*}
        \frac{R(x) - R(y)}{x-y} \begin{bNiceMatrix}
        \vec{\boldsymbol{r}}_{i,j;0} & \Cdots & \displaystyle\sum_{l=0}^{\kappa^{R}_{i,j}-1} \frac{1}{l!} \overleftarrow{\dfrac{\d^l}{\d y^l}}\vec{\boldsymbol{r}}_{i,j;\kappa^{R}_{i,j}-1-l}
    \end{bNiceMatrix}_{y = \rho_i} \\ = R(x) \begin{bNiceMatrix}
        \dfrac{\vec{\boldsymbol{r}}_{i,j;0}}{x-\rho_i} & \Cdots & \displaystyle\sum_{l=0}^{\kappa^R_{i,j}-1} \frac{1}{l!} \frac{\d^l}{\d y^l} \left( \frac{1}{x-y} \right)_{y=\rho_i} \vec{\boldsymbol{r}}_{i,j;\kappa^R_{i,j}-1-l}
    \end{bNiceMatrix} = R(x) \mathbb{P}^{(i)}(x).
    \end{multline*}
\end{proof}
            \subsubsection{Dual Perturbation} \label{Dual Perturbation}
Let's turn our attention to the following dual perturbation, 
\begin{equation*}
    L(x) \d \tilde{\mu}(x) = \d \mu(x) R(x), 
\end{equation*}
where the degrees of $\det R(x)$ and $\det L(x)$ are $N^Rp-r$ and $N^Lq-l$ and they satisfy conditions \ref{Cond: CondicionesMatricesLideresFinales_1} and \ref{Cond: CondicionesMatricesLideresFinales_2}, respectively. The matrix polynomial $L(x)$ is subjected to the condition that $\sigma(L) \cap \Delta = \varnothing$. The perturbed linear functional can be decomposed as,
\begin{equation*}
    \tilde{\mu}(x) = L^{-1}(x) \mu(x) R(x) + \sum_{i=1}^{M^L}\sum_{j=1}^{s^L_i}\sum_{k=0}^{\kappa^L_{i,j}-1} \left( \sum_{l=0}^k\frac{(-1)^l}{l!} \vec{\boldsymbol{l}}_{i,j;k-l}\delta^{(l)}(x-\lambda_i)\right) \boldsymbol{\xi}_{i,j,k}(x)R(x),
\end{equation*}
where, similarly as before, $\delta$ denotes Dirac's distribution, $\vec{\boldsymbol{l}}_{i,j;k}$ are the corresponding column vectors associated to a canonical set of right Jordan chains of $L(x)$ and $\boldsymbol{\xi}_i$ are arbitrary row vector functions with $p$ entries. For our purpose, it is useful to define the matrix polynomials as:
\[\begin{aligned}
    A(x) & = X^{\top}_{[p]}(x) \Bar{S}^{\top}, & B(x) & = H^{-1}SX_{[q]}(x). 
\end{aligned}\]
This is just a normalization condition, and now $A(x)$ orthogonal polynomials are monic matrix polynomials.

In what follows, we will also assume simple roots for $\det R(x)$ and $\det L(x)$, i.e., $M^R = N^Rp-r$ and $M^L = N^Lq - l$.

\begin{Proposition}
    In this context, the connection matrix satisfies the following relation: 
    \begin{equation*}
        \Omega = H^{-1}SL(\Lambda_{[q]})\tilde{S}^{-1}\tilde{H} = \Bar{S}^{-\top} R(\Lambda_{[p]}^\top) \tilde{\Bar{S}}^\top.
    \end{equation*}
    The connection matrix is a banded matrix with at most $M^R$ nonzero sudbiagonals and $M^L$ nonzero superdiagonals. Furthermore, the last subdiagonal is populated by ones. Explicitly,
    \begin{equation*}
        \Omega = \begin{bNiceMatrix}
            \Omega_{0,0} & \Omega_{0,1} & \Cdots & \Omega_{0,M^L}  & 0 & \Cdots[shorten-end=5pt] \\
            \Omega_{1,0} & \Omega_{1,1} & \Cdots & \Omega_{1,M^L} & \Omega_{1,M^L+1} & \Ddots[shorten-end=10pt] \\
            \Vdots & \Vdots & & \Vdots & & \Ddots[shorten-end=-35pt]  & \Ddots[shorten-end=50pt] \\
            \Omega_{M^R-1,0} & \Omega_{M^R-1,1} & \Cdots & \Omega_{M^R-1,M^L-1} & \Cdots & & \Omega_{M^R-1,M^R+M^L-1} &   &\\
            1 & & & & & & & \Ddots[shorten-end=5pt] & \\
            0 & \Ddots[shorten-end=-40pt] \\
            \Vdots & \Ddots[shorten-end=5pt]
        \end{bNiceMatrix}.
    \end{equation*}
\end{Proposition}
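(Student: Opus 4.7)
The plan is to imitate the derivation that led to Definition \ref{Prop: OmegaDefinition} in the original perturbation, with the roles of $L(x)$ and $R(x)$ (and those of the Gauss--Borel factors $S,\bar{S}$) interchanged. First, in strict analogy with Proposition \ref{Prop:ConnectM}, using $\Lambda_{[s]}X_{[s]}(x)=xX_{[s]}(x)$ together with the bimodule structure of $\C^{q\times\infty}[x]$ and $\C^{p\times\infty}[x]$, the dual relation $L(x)\d\tilde{\mu}(x)=\d\mu(x)R(x)$ translates into the moment-matrix identity
\[
L(\Lambda_{[q]})\,\tilde{\mathscr{M}}=\mathscr{M}\,R(\Lambda_{[p]}^\top).
\]
Inserting the Gauss--Borel factorizations $\mathscr{M}=S^{-1}H\bar{S}^{-\top}$ and $\tilde{\mathscr{M}}=\tilde{S}^{-1}\tilde{H}\tilde{\bar{S}}^{-\top}$ and rearranging by left-multiplication with $H^{-1}S$ and right-multiplication with $\tilde{\bar{S}}^{\top}$ yields the twin expressions
\[
\Omega=H^{-1}SL(\Lambda_{[q]})\tilde{S}^{-1}\tilde{H}=\bar{S}^{-\top}R(\Lambda_{[p]}^\top)\tilde{\bar{S}}^{\top},
\]
which is exactly the claimed definition of the dual connection matrix.

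For the banded structure I would invoke the structural results already proved in Section \ref{S:Matrix_structure_Uvarov}: under conditions \eqref{Cond: CondicionesMatricesLideresFinales_1} and \eqref{Cond: CondicionesMatricesLideresFinales_2}, $R(\Lambda_{[p]}^\top)$ is lower banded with at most $M^R$ nonzero scalar subdiagonals and, symmetrically, $L(\Lambda_{[q]})$ is upper banded with at most $M^L$ nonzero superdiagonals. An elementary index computation shows that multiplying a matrix of upper bandwidth $M^L$ by a lower triangular matrix on either side preserves that bandwidth, so the first expression for $\Omega$ has at most $M^L$ superdiagonals. Symmetrically, left or right multiplication of an $M^R$-subdiagonally banded matrix by upper triangular factors preserves the subdiagonal bandwidth, and the second expression bounds the subdiagonals of $\Omega$ by $M^R$.

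The remaining task is to verify that the $M^R$-th subdiagonal of $\Omega$ is populated by ones. Here I would use the second expression together with the explicit form of $R_{N^R}$ fixed by condition (C2.1): the only nonzero entries of $R_{N^R}$ form the identity block $I_{p-r}$, and these entries sit precisely on the $M^R$-th scalar subdiagonal of $R(\Lambda_{[p]}^\top)$ carrying the value $1$. Since $\bar{S}^{-\top}$ and $\tilde{\bar{S}}^{\top}$ are upper \emph{unitriangular}, a short computation of the form $(\bar{S}^{-\top}R(\Lambda_{[p]}^\top)\tilde{\bar{S}}^{\top})_{n+M^R,n}$ leaves only the product of a $1$ on the diagonal of the left factor, a $1$ from the extremal subdiagonal of $R(\Lambda_{[p]}^\top)$, and a $1$ on the diagonal of the right factor, so $\Omega_{n+M^R,n}=1$ for every $n$.

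The main obstacle I anticipate is the careful bookkeeping of scalar (rather than block) band indices under the simultaneous action of the block shifts $\Lambda_{[p]}$, $\Lambda_{[q]}$ and the matrix Gauss--Borel factors, together with the degree drops $r$ and $l$ coming from \eqref{Cond: CondicionesMatricesLideresFinales_1}--\eqref{Cond: CondicionesMatricesLideresFinales_2}; once this combinatorial accounting is handled, the entire argument is a faithful mirror of the non-dual case already established earlier in the paper.
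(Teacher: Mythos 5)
Your proposal is correct and follows exactly the route the paper intends (and uses for the non-dual case): the moment-matrix identity $L(\Lambda_{[q]})\tilde{\mathscr{M}}=\mathscr{M}R(\Lambda_{[p]}^\top)$ obtained as in Proposition \ref{Prop:ConnectM}, insertion of the two Gauss--Borel factorizations, and the band-preservation argument under triangular factors; the paper itself states this dual Proposition without proof, so your write-up supplies the missing mirror argument faithfully. One small imprecision: when $r>0$ the entries of the extreme $M^R$-th scalar subdiagonal of $R(\Lambda_{[p]}^\top)$ are not all contributed by the block $I_{p-r}$ of $R_{N^R}$ --- the remaining positions on that subdiagonal come from the block $I_r$ of $R_{N^R-1}$ in condition \eqref{Cond: CondicionesMatricesLideresFinales_1} --- but since both blocks carry ones, your conclusion $\Omega_{n+M^R,n}=1$ is unaffected.
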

\begin{Proposition}
    The connection formulas for the $\tilde{A}(x)$ and $\tilde{B}(x)$ are, 
\[    \begin{aligned}
        \Omega\tilde{B}(x) & = B(x)L(x), & A(x)\Omega & = R(x)\tilde{A}(x).  
    \end{aligned}\]
    For the Cauchy transforms, we find the following relations: 
    \begin{align*}
        L(z)\tilde{C}(z) & = C(z)\Omega + \int_\Delta \frac{L(z) - L(x)}{z-x} \d \tilde{\mu}(x) \tilde{A}(x), & D(z)R(z) & = \Omega \tilde{D}(z) + \int_\Delta B(x) \d \mu(x) \frac{R(z)-R(x)}{z-x}.
    \end{align*}
\end{Proposition}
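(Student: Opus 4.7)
The plan is to establish the two polynomial identities first, directly from the definitions of $\Omega$, $A$, $B$, $\tilde A$, $\tilde B$ together with the modified normalization conventions of Section~\ref{Dual Perturbation}, and then to obtain the two Cauchy-transform identities by multiplying those polynomial identities against $\frac{1}{z-x}\,d\mu$ or $\frac{1}{z-x}\,d\tilde\mu$ and using the defining relation $L(x)\,d\tilde\mu(x)=d\mu(x)\,R(x)$ to convert one measure into the other.

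For $\Omega\tilde B(x)=B(x)L(x)$, I would substitute $\tilde B(x)=\tilde H^{-1}\tilde S X_{[q]}(x)$ and the right-hand expression $\Omega=H^{-1}SL(\Lambda_{[q]})\tilde S^{-1}\tilde H$ from the preceding Proposition, so that $\tilde S^{-1}\tilde H\cdot\tilde H^{-1}\tilde S=I$ and the factors collapse to $\Omega\tilde B(x)=H^{-1}SL(\Lambda_{[q]})X_{[q]}(x)$. The key step is the bimodule identity $L(\Lambda_{[q]})X_{[q]}(x)=X_{[q]}(x)L(x)$, which follows from $\Lambda_{[q]}X_{[q]}(x)=xX_{[q]}(x)$ applied termwise to $L(\Lambda_{[q]})=\sum_l L_l\Lambda_{[q]}^l$, exactly as in the calculation appearing in the proof of Proposition~\ref{Prop:ConnectM}. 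This gives $\Omega\tilde B(x)=H^{-1}SX_{[q]}(x)L(x)=B(x)L(x)$. The identity $A(x)\Omega=R(x)\tilde A(x)$ is proved in the mirror fashion: substituting $A(x)=X^\top_{[p]}(x)\bar S^\top$ and the alternative form $\Omega=\bar S^{-\top}R(\Lambda_{[p]}^\top)\tilde{\bar S}^\top$ collapses the Gauss--Borel factors, and the analogous identity $X^\top_{[p]}(x)R(\Lambda_{[p]}^\top)=R(x)X^\top_{[p]}(x)$ yields $R(x)X^\top_{[p]}(x)\tilde{\bar S}^\top=R(x)\tilde A(x)$.

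For the Cauchy-transform formulas, I would apply $\int(z-x)^{-1}\,d\mu(x)(\cdot)$ to $A(x)\Omega=R(x)\tilde A(x)$, getting
\[
C(z)\Omega=\int_\Delta \frac{d\mu(x)}{z-x}\,R(x)\,\tilde A(x)=\int_\Delta \frac{L(x)\,d\tilde\mu(x)}{z-x}\,\tilde A(x),
\]
where the second equality uses $d\mu(x)\,R(x)=L(x)\,d\tilde\mu(x)$; subtracting this from $L(z)\tilde C(z)=\int L(z)(z-x)^{-1}d\tilde\mu(x)\tilde A(x)$ produces the stated formula, with the integrand becoming the polynomial kernel $\frac{L(z)-L(x)}{z-x}$ of Lemma~\ref{Lema: R-BivaMP}. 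The companion identity $D(z)R(z)=\Omega\tilde D(z)+\int B(x)\,d\mu(x)\frac{R(z)-R(x)}{z-x}$ follows symmetrically by multiplying $\Omega\tilde B(x)=B(x)L(x)$ on the right by $(z-x)^{-1}d\tilde\mu(x)$ and again invoking $L(x)\,d\tilde\mu(x)=d\mu(x)\,R(x)$.

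No genuine obstacle arises here; the argument is essentially a bookkeeping exercise once the adjusted normalizations $A(x)=X^\top_{[p]}(x)\bar S^\top$ and $B(x)=H^{-1}SX_{[q]}(x)$ of Section~\ref{Dual Perturbation} are used consistently. The only subtle point is ensuring that the integrals defining $\tilde C$ and $\tilde D$ make sense, which is precisely the content of the hypothesis $\sigma(L)\cap\Delta=\varnothing$ imposed in the dual setting (playing here the role analogous to Remark~\ref{Remark: locally integrable}); this guarantees that $L^{-1}(x)\,d\mu(x)\,R(x)$ is a well-defined measure and that the Cauchy transforms of $\tilde A$ and $\tilde B$ are holomorphic outside $\Delta$, so the manipulations above are legitimate on the common domain of holomorphy.
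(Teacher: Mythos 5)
Your proposal is correct and follows exactly the method the paper uses for the corresponding formulas in the direct Uvarov case (the proofs of $\tilde A(x)\Omega=R(x)A(x)$ via collapsing the Gauss--Borel factors in $\Omega$ together with $X_{[p]}^\top(x)R(\Lambda_{[p]}^\top)=R(x)X_{[p]}^\top(x)$, and of the Cauchy-transform relations by integrating the polynomial connection formulas against $(z-x)^{-1}$ times the appropriate measure and subtracting); the paper in fact states this dual-perturbation Proposition without proof, implicitly relying on that same mirrored argument. No discrepancy to report.
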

In this case, we will make use of the relations between the $A(x)$ and the $C(x)$ families. For that reason, we generalize the notation introduced throughout this section. 
\begin{Definition} \label{Def: DualNotation}
    Let us introduce the following notation: 
  \[  \begin{aligned}
         \mathbb{A}_n^{(k)} & = \cev{\boldsymbol{r}}_kA_n(\rho_k), & \mathbb{C}^{(i)}_n & = \cev{\boldsymbol{l}}_iB_n(\lambda_i), & \mathbb{W}_n^{(i)} = \cev{\boldsymbol{l}}_iL'(\lambda_i)\vec{\boldsymbol{l}}_i\boldsymbol{\xi}_i(\lambda_i)A_n(\lambda_i).
    \end{aligned}\]
    For the CD kernels, we have:
\[    \begin{aligned}
        \mathbb{K}^{[n],(k)}(y) & = \cev{\boldsymbol{r}}_kK^{[n]}(\rho_k,y), & \mathbb{K}_C^{[n],(i)}(y) = \cev{\boldsymbol{l}}_iK_C^{[n]}(\lambda_i,y)-\cev{\boldsymbol{l}}_iL'(\lambda_i)\vec{\boldsymbol{l}}_i\boldsymbol{\xi}_i(\lambda_i)K^{[n]}(\lambda_i,y), 
    \end{aligned}\]
    where in both cases,  $n \in \mathbb{N}_0$, $k \in \{ 1, \dots M^R \}$ and $i \in \{ 1, \dots, M^L \}$.
\end{Definition}
\begin{Theorem} \label{Th: ChristFormulasDual}
    The Christoffel-type formulas for a transpose Geronimus perturbation are: 
    \begin{align*}
            \tilde{B}_{n-1}^{(b)}(y) &= \sum_{\tilde{b}=1}^q\begin{vNiceMatrix}
            \mathbb{A}_{n-M^L}^{(1)} & \Cdots & \mathbb{A}_{n+M^R-2}^{(1)} & \mathbb{K}_{\tilde{b}}^{[n-M^L-1],(1)}(y) \\
            \Vdots & & \Vdots & \Vdots \\\\\\
            \mathbb{A}_{n-M^L}^{(M^R)} & \Cdots & \mathbb{A}_{n+M^R-2}^{(M^R)} & \mathbb{K}_{\tilde{b}}^{[n-M^L-1],(M^R)}(y) \\
            \mathbb{C}_{n-M^L}^{(1)}-\mathbb{W}_{n-M^L}^{(1)} & \Cdots & \mathbb{C}_{n+M^R-2}^{(1)}-\mathbb{W}_{n+M^R-2}^{(1)} & \mathbb{K}_{C,\tilde{b}}^{[n-M^L-1],(1)}(y) + \frac{\cev{l}_{1,\tilde{b}}}{y-\lambda_1} \\
            \Vdots & & \Vdots & \Vdots \\\\\\
            \mathbb{C}_{n-M^L}^{(M^L)}-\mathbb{W}_{n-M^L}^{(M^L)} & \Cdots & \mathbb{C}_{n+M^R-2}^{(M^L)}-\mathbb{W}_{n+M^R-2}^{(M^L)} & \mathbb{K}_{C,\tilde{b}}^{[n-M^L-1],(M^L)}(y) + \frac{\cev{l}_{M^L,\tilde{b}}}{y-\lambda_{M^L}}
        \end{vNiceMatrix}\frac{L_{\tilde b,b}(y)}{\tau_n}, \\[4pt]
        \tilde{A}_n^{(a)}(x) & = \frac{1}{\tau_n} \sum_{\tilde{a}=1}^p R^{-1}_{\tilde{a},a}(x) \begin{vNiceMatrix}
            \mathbb{A}_{n-M^L}^{(1)} & \Cdots & \mathbb{A}_{n+M^R}^{(1)} \\
            \Vdots & & \Vdots \\\\
            \mathbb{A}_{n-M^L}^{(M^R)} & \Cdots & \mathbb{A}_{n+M^R}^{(M^R)} \\
            \mathbb{C}_{n-M^L}^{(1)}-\mathbb{W}_{n-M^L}^{(1)} & \Cdots & \mathbb{C}_{n+M^R}^{(1)}-\mathbb{W}_{n+M^R}^{(1)} \\
            \Vdots & & \Vdots \\\\
            \mathbb{C}_{n-M^L}^{(M^L)}-\mathbb{W}_{n-M^L}^{(M^L)} & \Cdots & \mathbb{C}_{n+M^R}^{(M^L)}-\mathbb{W}_{n+M^R}^{(M^L)} \\[3pt]
            A_{n-M^L}^{(a)}(x) & \Cdots & A_{n+M^R}^{(a)}(x)
        \end{vNiceMatrix},
    \end{align*}
    where 
    \begin{equation*}
        \tau_n = \begin{vNiceMatrix}
            \mathbb{A}_{n-M^L}^{(1)} & \Cdots & \mathbb{A}_{n+M^R-1}^{(1)} \\
            \Vdots & & \Vdots \\\\\\
            \mathbb{A}_{n-M^L}^{(M^R)} & \Cdots & \mathbb{A}_{n+M^R-1}^{(M^R)} \\
            \mathbb{C}_{n-M^L}^{(1)}-\mathbb{W}_{n-M^L}^{(1)} & \Cdots & \mathbb{C}_{n+M^R-1}^{(1)}-\mathbb{W}_{n+M^R-1}^{(1)} \\
            \Vdots & & \Vdots \\\\\\
            \mathbb{C}_{n-M^L}^{(M^L)}-\mathbb{W}_{n-M^L}^{(M^L)} & \Cdots & \mathbb{C}_{n+M^R-1}^{(M^L)}-\mathbb{W}_{n+M^R-1}^{(M^L)}
        \end{vNiceMatrix}.
    \end{equation*}
\end{Theorem}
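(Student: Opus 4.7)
The plan is to mirror the argument of Theorem \ref{Th: ChrisFormulas} in the dual setting of Section \ref{Dual Perturbation}. The key ingredients are now the connection relations $A(x)\Omega = R(x)\tilde{A}(x)$ and $\Omega \tilde{B}(x) = B(x)L(x)$, together with their Cauchy-transform companions, and the revised banded structure of $\Omega$ (ones on the last \emph{sub}diagonal, $M^L$ superdiagonals and $M^R$ subdiagonals). Throughout, the working dictionary is: left eigenvectors of $R(x)$ replace the right eigenvectors that were used for $B$; left eigenvectors of $L(x)$ replace the right ones used for $A$; the vectors $\mathbb{A}_n^{(k)}$, $\mathbb{C}_n^{(i)}$, $\mathbb{K}^{[n],(k)}$, $\mathbb{K}_C^{[n],(i)}$ from Definition \ref{Def: DualNotation} play the roles of $\mathbb{B}_n^{(k)}$, $\mathbb{D}_n^{(i)}$, $\mathbb{K}^{[n],(k)}$, $\mathbb{K}_D^{[n],(i)}$.

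First, I would derive the dual analogue of Proposition \ref{Prop: OmegaComp}. Acting from the left on $A(x)\Omega = R(x)\tilde{A}(x)$ with $\cev{\boldsymbol{r}}_k$ and evaluating at $x=\rho_k$ annihilates the right-hand side, yielding, via $\mathbb{A}_n^{(k)} = \cev{\boldsymbol{r}}_k A_n(\rho_k)$, $M^R$ linear equations per row of $\Omega$. To complete the system I would use $L(z)\tilde{C}(z) = C(z)\Omega + \int_\Delta \frac{L(z)-L(x)}{z-x}\d\tilde{\mu}(x)\tilde{A}(x)$: applying $\cev{\boldsymbol{l}}_i$ on the left and taking $z\to\lambda_i$ kills the left-hand side, while after inserting the decomposition of $\d\tilde{\mu}$ from Section \ref{Dual Perturbation} the integral reduces to $\cev{\boldsymbol{l}}_i L'(\lambda_i)\vec{\boldsymbol{l}}_i\boldsymbol{\xi}_i(\lambda_i)A_n(\lambda_i) = \mathbb{W}_n^{(i)}$. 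What survives is the combination $\mathbb{C}_n^{(i)}-\mathbb{W}_n^{(i)}$ contracted against the entries of the corresponding row of $\Omega$, producing $M^L$ further equations per row.

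Assembling the $M^R+M^L$ equations for large enough $n$, the coefficient matrix is exactly the one whose determinant is $\tau_n$. Assuming $\tau_n\neq 0$, Cramer's rule expresses the relevant entries of $\Omega$ as ratios of determinants. Substituting back into $A(x)\Omega = R(x)\tilde{A}(x)$ and inverting $R(x)$ (entrywise with $R^{-1}_{\tilde{a},a}(x)$) yields the stated formula for $\tilde{A}_n^{(a)}(x)$. To obtain the $\tilde{B}$ formula, one inserts the Cramer expressions into $\Omega \tilde{B}(x) = B(x) L(x)$, isolates the $\tilde{B}_{n-1}^{(b)}$ component, and applies Laplace expansion along the last column; the polynomial part combines with the original $A$-polynomials via $\mathbb{K}^{[n-M^L-1],(k)}(y)$ and $\mathbb{K}_C^{[n-M^L-1],(i)}(y)$ exactly as in Proposition \ref{Prop: Kfinal} rewritten dually. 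Finally multiplying by $L_{\tilde b,b}(y)/\tau_n$ and summing over $\tilde b$ recovers the displayed determinantal expression.

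The main obstacle is bookkeeping rather than any new analytic step: the dual setting interchanges the roles of the two sides, so the banded structure of $\Omega$, the indexing of its rows and columns, and the placement of the unit subdiagonal must all be retracked; in particular one must verify that the truncation of the kernel sums at $n-M^L-1$ (instead of $n-1-M^R$) and the appearance of the poles $(y-\lambda_i)^{-1}$ (rather than $(x-\rho_i)^{-1}$) are consistent with the Laplace expansion. Once this dictionary is in place, the Lemma \ref{Lema: R-BivaMP} argument and the Laplace-expansion step go through verbatim, and the proof reduces to the same formal manipulations as in Theorem \ref{Th: ChrisFormulas}.
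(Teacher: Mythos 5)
Your proposal is correct and follows exactly the route the paper intends: the theorem is stated without its own proof, on the strength of the earlier remark that the results of the simple-eigenvalue case extend naturally to the dual setting, and your dualization dictionary (left eigenvectors of $R$ and $L$, the relations $A(x)\Omega=R(x)\tilde A(x)$ and $\Omega\tilde B(x)=B(x)L(x)$, the Cauchy-transform companions, and the swapped band structure of $\Omega$ with ones on the last subdiagonal) is precisely the intended mirror of the proof of Theorem \ref{Th: ChrisFormulas}. One small imprecision: $\tilde B_{n-1}^{(b)}$ cannot be ``isolated'' directly from $\Omega\tilde B(x)=B(x)L(x)$, since each row of that identity couples a whole window of $M^R+M^L+1$ unknown $\tilde B_j$'s; the $\tilde B$ formula must instead come from the dual kernel connection identities (the analogues of Propositions \ref{Prop: KernelConex} and \ref{Prop: Kfinal}, with $K_C$ and the poles $(y-\lambda_i)^{-1}$ replacing $K_D$ and $(x-\rho_i)^{-1}$) evaluated at the eigenvalues and then inverted against the $\tau$-matrix — which is exactly what you invoke in the following clause, so the argument still closes.
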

\subsection{Markov--Stieltjes Functions}\label{S:Markov-Stieltjes}

The Markov--Stieltjes function associated with a measure \(\d\mu\) corresponding to standard orthogonal polynomials is defined as  
\[
F(z) = \int_{\Delta}\frac{\d\mu(x)}{z-x},
\]  
which represents the Cauchy transform of the identity element and plays a fundamental role in the spectral theory of orthogonal polynomials. As demonstrated in \cite{Zhe}, linear spectral transformations of the Markov--Stieltjes function for standard orthogonal polynomials take the form  
\begin{equation} \label{Eq: MarkovStieljes}
    F(z) \longrightarrow \frac{A(z) F(z) + B(z)}{C(z)F(z) + D(z)},
\end{equation}
where \(A, B, C,\) and \(D\) are polynomials.  

Zhedanov established in \cite{Zhe} that Christoffel transformations induce a more general linear spectral transformation of the form  
\[
F(z) \longrightarrow A(z) F(z) + B(z).
\]  
Furthermore, he demonstrated that the composition of \(N\) successive Geronimus transformations yields a linear spectral transformation of the form  
\[
F(z) \longrightarrow \frac{F(z) + B(z)}{D(z)},
\]  
where \(\deg B = N-1\) and \(\deg D = N\).  

In this work, we extend these results to Uvarov transformations in the mixed-type multiple orthogonality scenario. Specifically, we show that Equation \eqref{Eq: MarkovStieljes} remains valid, but with \(A, B, C,\) and \(D\) now being matrix polynomials.  

Consider a Markov--Stieltjes \(q \times p\)-matrix function defined by  
\[
F(z) \coloneq \int_{\Delta}\frac{\d\mu(x)}{z-x}.
\]  

\begin{Proposition}
	The Uvarov transformation, \(\d \tilde{\mu}(x) R(x) = L(x) \d \mu(x)\) (where \(\deg L = N^L\) and \(\deg R = N^R\)), induces a matrix linear spectral transformation of the form  
	\[
	\tilde{F}(z) = \big(L(z)F(z) - S(z) + \tilde{S}(z) \big) R^{-1}(z),
	\]  
	with \(\deg S = N^L - 1\) and \(\deg \tilde{S} = N^R - 1\). The matrix polynomial coefficients are given by  
	\[
\begin{aligned}
		S_k&=\int_{\Delta} (L_{k+1}+xL_{k+2}+\cdots+x^{N_L-k-1}L_{N_L}) \d \mu(x), & k&\in\{0,\dots,N^L - 1\}, \\
        \tilde{S}_k&=\int_{\Delta} \d \tilde{\mu}(x) (R_{k+1}+xR_{k+2}+\cdots+x^{N_R-k-1}R_{N_R}), & k&\in\{0,\dots,N^R - 1\}.
\end{aligned}
		\]
\end{Proposition}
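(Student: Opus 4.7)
The approach is to multiply $\tilde F(z)$ on the right by $R(z)$ and exploit the Uvarov identity $\d\tilde\mu(x)R(x)=L(x)\d\mu(x)$ by the standard ``add and subtract'' trick: inside the Cauchy kernel, write $R(z)=(R(z)-R(x))+R(x)$, and subsequently write $L(x)=L(z)-(L(z)-L(x))$. The two difference quotients $(R(z)-R(x))/(z-x)$ and $(L(z)-L(x))/(z-x)$ are, by Lemma~\ref{Lema: R-BivaMP}, bivariate matrix polynomials of degrees $N^R-1$ and $N^L-1$ respectively; integrating them against $\d\tilde\mu$ and $\d\mu$ in the variable $x$ produces matrix polynomials in $z$ of the claimed degrees.

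Concretely, the plan is to compute
\begin{align*}
\tilde F(z)R(z)&=\int_\Delta\frac{\d\tilde\mu(x)R(z)}{z-x}
=\int_\Delta\d\tilde\mu(x)\frac{R(z)-R(x)}{z-x}+\int_\Delta\frac{\d\tilde\mu(x)R(x)}{z-x}\\
&=\tilde S(z)+\int_\Delta\frac{L(x)\d\mu(x)}{z-x}
=\tilde S(z)+L(z)F(z)-\int_\Delta\frac{L(z)-L(x)}{z-x}\d\mu(x)\\
&=L(z)F(z)-S(z)+\tilde S(z),
\end{align*}
where in the third equality the Uvarov relation is used. Dividing on the right by $R(z)$ yields the stated formula $\tilde F(z)=\bigl(L(z)F(z)-S(z)+\tilde S(z)\bigr)R^{-1}(z)$.

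It remains to identify the coefficients of $S$ and $\tilde S$. Using the explicit expansion \eqref{eq:Rxy}, namely
\[
\frac{L(z)-L(x)}{z-x}=\sum_{i=1}^{N^L}L_i\sum_{j=1}^{i}z^{i-j}x^{j-1},\qquad
\frac{R(z)-R(x)}{z-x}=\sum_{i=1}^{N^R}R_i\sum_{j=1}^{i}z^{i-j}x^{j-1},
\]
and collecting powers of $z$ (setting $k=i-j$, so that $i$ ranges from $k+1$ to the top degree), one reads off immediately
\[
S_k=\int_\Delta\bigl(L_{k+1}+xL_{k+2}+\cdots+x^{N^L-k-1}L_{N^L}\bigr)\d\mu(x)
\]
for $k\in\{0,\dots,N^L-1\}$, and for $\tilde S$ one uses that $x^{j-1}$ is scalar and thus commutes with $R_i$ to place $R_i$ on the right, obtaining
\[
\tilde S_k=\int_\Delta\d\tilde\mu(x)\bigl(R_{k+1}+xR_{k+2}+\cdots+x^{N^R-k-1}R_{N^R}\bigr)
\]
for $k\in\{0,\dots,N^R-1\}$.

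There is no serious obstacle in this argument; the only point requiring care is the bookkeeping of left/right multiplications, since $\d\mu$ is $q\times p$ and hence $L(x)$ must act from the left while $R(x)$ acts from the right throughout. The invertibility of $R(z)$ needed for the final rearrangement is automatic off the spectrum $\sigma(R)$, where both $F$ and $\tilde F$ are holomorphic, and the resulting rational matrix identity extends by analyticity; implicit in the whole computation is Remark~\ref{Remark: locally integrable}, ensuring that $\d\tilde\mu$ is a well-defined (possibly distributional) object against which the polynomial kernels above may be integrated.
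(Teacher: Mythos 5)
Your proposal is correct and follows essentially the same route as the paper: right-multiply $\tilde F(z)$ by $R(z)$, add and subtract the difference quotients $\frac{R(z)-R(x)}{z-x}$ and $\frac{L(z)-L(x)}{z-x}$, invoke the Uvarov relation and Lemma~\ref{Lema: R-BivaMP}, and read off the coefficients from the expansion \eqref{eq:Rxy}. If anything, your bookkeeping is slightly cleaner: your labels for $S$ and $\tilde S$ agree with the Proposition's statement, whereas the paper's proof defines them with the roles of the $L$/$\d\mu$ and $R$/$\d\tilde\mu$ integrals interchanged relative to the stated coefficient formulas.
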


\begin{proof}
	The result follows directly from the relation \(\d \tilde{\mu}(x) R(x) = L(x) \d \mu(x)\). Specifically, we derive the identity  
	\[
	\tilde{F}(z) R(z) - \int_\Delta \d \tilde{\mu}(x) \frac{R(z) - R(x)}{z-x}  = L(z)F(z) - \int_\Delta \frac{L(z) - L(x)}{z-x} \d \mu(x).
	\]  
	By defining  
\[	\begin{aligned}
	    S(z) & \coloneq \int_\Delta \d \tilde{\mu}(x) \frac{R(z) - R(x)}{z-x}, & \tilde{S}(z) & \coloneq \int_\Delta \frac{L(z) - L(x)}{z-x} \d \mu(x), 
	\end{aligned}\]
	and applying Lemma \ref{Lema: R-BivaMP}, we obtain  
	\[
	\begin{aligned}
		S(z) &= \sum_{i=1}^{N^R} \sum_{j=1}^i \left( \int_{\Delta} \d\tilde{\mu}(x) x^{i-j} \right) R_i z^{j-1}, & 
        \tilde{S}(z) & = \sum_{i=1}^{N^L} L_i \sum_{j=1}^i \left( \int_{\Delta} \d\mu(x) x^{i-j} \right) z^{j-1},
	\end{aligned}
	\]  
	confirming that these are matrix polynomials of degrees \(N^R-1\) and \(N^L - 1\), respectively.
\end{proof}

\subsection{Case Study: Non-Trivial Perturbations of  Jacobi--Piñeiro}\label{S:case study}
As the explicit formulas for multiple orthogonal polynomials of mixed type are rare, we will use as case study a non mixed case.
Hence, we consider Jacobi--Piñeiro multiple orthogonal polynomials, that is $q=1$, and in particular, $p=3$.

 Based on the results shown in \cite{ExamplesMOP}, see also \cite{ExamplesMOP2}, we have $\d\mu_a=w_a\d\mu$, where the weight functions $w_a$ and measure $\d\mu$ are
\begin{align*}
&\begin{aligned}
 w_{a}(x;\alpha_a)=&x^{\alpha_a},& a&\in\{1,2,3\}, 
\end{aligned}&\d\mu(x)&=(1-x)^\beta\d x, & \Delta&=[0,1],
\end{align*}
with $\alpha_1,\alpha_2,\alpha_3,\beta>-1$ and, in order to have an AT system, $\alpha_i-\alpha_j\not\in\mathbb Z$ for $i\neq j$.

The moments are 
\[\begin{aligned}
\label{MomentJP}
\int_{0}^{1}x^{\alpha_a+k}(1-x)^\beta\d x&=\dfrac{\Gamma(\beta+1)\Gamma(\alpha_a+k+1)}{\Gamma(\alpha_a+\beta+k+2)},
& k &\in \mathbb N_0.
\end{aligned}\]

    The Pochhammer symbols $(x)_n$, $x\in\C$ and $n\in\N_0$, are defined as,
\begin{align*}
 (x)_n\coloneq\dfrac{\Gamma(x+n)}{\Gamma(x)}=\begin{cases}
 x(x+1)\cdots(x+n-1)\;\text{if}\;n\in\N,\\
 1\;\text{if}\;n=0.
 \end{cases}
\end{align*}

The step-line in this situation means that we are considering triples of nonnegative integers, say $(n_1, n_2, n_3)$, of the form
\begin{align*}
	(0, 0, 0), (1, 0, 0), (1, 1, 0), (1, 1, 1), (2, 1, 1), (2, 2, 1), \dots.
\end{align*}
On the step-line, we have, for $m \in \mathbb{N}_0$,
\[
\begin{aligned}
	n &= 3m, & (n_1, n_2, n_3) &= (m, m, m), \\
	n &= 3m + 1, & (n_1, n_2, n_3) &= (m + 1, m, m), \\
	n &= 3m + 2, & (n_1, n_2, n_3) &= (m + 1, m + 1, m).
\end{aligned}
\]

    The Jacobi--Piñeiro polynomials of type I are, see  \cite{ExamplesMOP},
    \begin{align*}
\label{JPTypeI}
 A_n^{(a)}(x) \coloneq P^{(a)}_{n}(x;\alpha_1,\alpha_2,\alpha_3,\beta) =\sum_{l=0}^{n_a-1}C_{n}^{(a),l}x^l,
\end{align*}
with
\begin{multline*}
C_{n}^{(a),l}\coloneq\frac{
(-1)^{n-1}
\prod_{q=1}^{3}(\alpha_q+\beta+n)_{n_q}}{
(n_a-1)! 
\prod_{q=1,q\neq a}^{3}(\alpha_q-\alpha_a)_{n_q}} 
\frac{\Gamma(\alpha_a+\beta+n)}{\Gamma(\beta+n)\Gamma(\alpha_a+1)}
\\ \times
\frac{(-n_a+1)_l(\alpha_a+\beta+n)_l}{l!(\alpha_a+1)_l}
\prod_{q=1,q\neq a}^3
\frac{(\alpha_a-\alpha_q-n_q+1)_l}{(\alpha_a-\alpha_q+1)_l},
\end{multline*}
and 
\[\begin{aligned}
    n_a & = \deg(A_n^{(a)})+1, & n & = n_1+n_2+n_3.
\end{aligned}\]

    The monic Jacobi--Piñeiro polynomials of type II are, see  \cite{ExamplesMOP},
    \begin{align}
        B_n(x) \coloneq P_{n}(x;\alpha_1,\alpha_2,\alpha_3,\beta)=\sum_{l_1=0}^{n_1}\sum_{l_2=0}^{n_2}\sum_{l_3=0}^{n_3}
        C_{n}^{l_1,l_2,l_3}\, x^{l_1+l_2+l_3}
    \end{align}
    with
    \begin{multline*}
        C_{n}^{l_1,l_2,l_3}
        \coloneq
        (-1)^{n}\prod_{q=1}^3\dfrac{(\alpha_q+1)_{n_q}}{(\alpha_q+\beta+n+1)_{n_q}}\dfrac{(-n_q)_{l_q}}{l_q!}\dfrac{(\alpha_1+\beta+n_1+1)_{l_1+l_2+l_3}}{(\alpha_1+1)_{l_1+l_2+l_3}}
        \\
        \times\dfrac{(\alpha_1+n_1+1)_{l_2+l_3}(\alpha_{2}+n_{2}+1)_{l_3}}{(\alpha_1+\beta+n_1+1)_{l_2+l_3}(\alpha_{2}+\beta+n_1+n_{2}+1)_{l_3}}
        \\ \times \dfrac{(\alpha_2+\beta+n_1+n_2+1)_{l_2+l_3}(\alpha_{3}+\beta+n+1)_{l_3}}{(\alpha_2+1)_{l_2+l_3}(\alpha_{3}+1)_{l_3}}.
    \end{multline*}

Let us consider two perturbed measures defined as follows:
\[\begin{aligned}
    (c-x) \d \tilde{\mu}_1(x) & = \d \mu(x) P_1(x), & P_1(x) &= \begin{bNiceMatrix}
        0 & x-d & 0 \\
        0 & 0 & x-d \\
        1 & 0 & 0
    \end{bNiceMatrix}, \\  
    \d \tilde{\mu}_2(x) P_2(x) & = (x-d) \d \mu(x), & P_2(x) &= \begin{bNiceMatrix}
        0 & c-x & 0 \\
        0 & 0 & c-x \\
        1 & 0 & 0
    \end{bNiceMatrix}, 
\end{aligned}\]
where \( c, d \in (-\infty, 0] \cup [1, \infty) \). Note that Conditions \ref{Cond: CondicionesMatricesLideresFinales_1} and \ref{Cond: CondicionesMatricesLideresFinales_2} have been modified, but as previously mentioned, these were merely normalization conditions. The right eigenvectors of \( P_1(x) \) and \( P_2(x) \) can be chosen as \( \boldsymbol{e}_2 \) and \( \boldsymbol{e}_3 \) for both matrices, with corresponding left eigenvectors \( \boldsymbol{e}_1^\top \) and \( \boldsymbol{e}_2^\top \).

The perturbed measures can be expressed in terms of the original measure as:
\begin{align*}
    \d \tilde{\mu}_1(x) & = \d \mu(x) \begin{bNiceMatrix}
        0 & \dfrac{x-d}{c-x} & 0 \\
        0 & 0 & \dfrac{x-d}{c-x} \\
        \dfrac{1}{c-x} & 0 & 0
    \end{bNiceMatrix} + \delta(x-c) \boldsymbol{\xi}(x) \begin{bNiceMatrix}
        0 & x-d & 0 \\
        0 & 0 & x-d \\
        1 & 0 & 0
    \end{bNiceMatrix}, \\[3pt]
    \d \tilde{\mu}_2(x) & = \d \mu(x) \begin{bNiceMatrix}
        0 & 0 & x-d \\
        \dfrac{x-d}{c-x} & 0 & 0 \\
        0 & \dfrac{x-d}{c-x} & 0
    \end{bNiceMatrix} + \delta(x-c) (x-d)\left( \xi^{(1)}(x) \boldsymbol{e}_1^\top + \xi^{(2)}(x) \boldsymbol{e}_2^\top \right), 
\end{align*}
where \( \boldsymbol{\xi}(x) \) is an arbitrary \( 1 \times 3 \) matrix function and \( \xi^{(1)}(x), \xi^{(2)}(x) \) are arbitrary functions of \( x \). Setting \( c = 1 \) and \( d = 0 \) for the first perturbation (with \( \boldsymbol{\xi}(x) = 0 \)) recovers a known family of Jacobi–Piñeiro polynomials. In this case, the parameters transform as
\[
(\alpha_1, \alpha_2, \alpha_3, \beta)  \xrightarrow{}{}  (\alpha_3, \alpha_1+1, \alpha_2+1, \beta - 1),
\]
provided that \( \beta > 0 \). However, this parameter choice for the second measure does not yield Jacobi–Piñeiro orthogonal polynomials, as the parameter \( \beta \) affects the entries of \( \d \tilde{\mu}_2(x) \) differently. Note that this choice of \( c \) and \( d \) places the zeros of the perturbation precisely at the boundary of the support, that is, within the interval \( [0,1] \). However, local integrability is guaranteed by the specific choice of parameters.

\begin{Proposition}
    For \( n > 0 \), the Christoffel-type formula for the type I multiple orthogonal polynomials associated with \( \d \tilde{\mu}_1 \) is given by:
    \[
    \left( \begin{bNiceMatrix}
        (x-d)\tilde{P}_{n-1}^{(2)}(x) \\[2pt]
        (x-d)\tilde{P}_{n-1}^{(3)}(x) \\[2pt]
        \tilde{P}_{n-1}^{(1)}(x) 
    \end{bNiceMatrix} \right)_{a} = \frac{1}{\tau_n} \begin{vNiceMatrix}
        P_{n-1}^{(a)}(x) & P_{n-1}^{(1)}(d) & P_{n-1}^{(2)}(d) & W_{n-1}(c) \\[3pt]
        P_{n}^{(a)}(x) & P_{n}^{(1)}(d) & P_{n}^{(2)}(d) & W_{n}(c) \\[3pt]
        P_{n+1}^{(a)}(x) & P_{n+1}^{(1)}(d)  & P_{n+1}^{(2)}(d) & W_{n+1}(c) \\[3pt]
        P_{n+2}^{(a)}(x) & P_{n+2}^{(1)}(d) & P_{n+2}^{(2)}(d) & W_{n+2}(c)
    \end{vNiceMatrix}.
    \] 
    For type II polynomials, we have:
    \[
    \tilde{P}_{n+1}(x) = (x-c)B_n(y) + \frac{(x-c)}{\tau_n} \sum_{i=0}^{n-1} \begin{vNiceMatrix}
        P_{i}^{(1)}(d) & P_{n+1}^{(1)}(d) & P_{n+2}^{(1)}(d) \\[3pt]
        P_{i}^{(2)}(d) & P_{n+1}^{(2)}(d) & P_{n+2}^{(2)}(d) \\[3pt]
        W_i(c) & W_{n+1}(c) & W_{n+2}(c) 
    \end{vNiceMatrix} B_i(x) - \frac{1}{\tau_n} \begin{bNiceMatrix}
        P_{n+1}^{(1)}(d) & P_{n+2}^{(1)}(d) \\[3pt]
        P_{n+1}^{(2)}(d) & P_{n+2}^{(2)}(d) 
    \end{bNiceMatrix},
    \]
    where, in both cases,
    \[
    W_i(c) \coloneq C_i(c) + \sum_{j=1}^3 \xi_j(c) P_{i}^{(j)}(c), \quad \tau_n \coloneq \begin{vNiceMatrix}
        P_{n}^{(1)}(d) & P_{n+1}^{(1)}(d) & P_{n+2}^{(1)}(d) \\[3pt]
        P_{n}^{(2)}(d) & P_{n+1}^{(2)}(d) & P_{n+2}^{(2)}(d) \\[3pt]
        W_{n}(c) & W_{n+1}(c) & W_{n+2}(c) 
    \end{vNiceMatrix}.
    \]
\end{Proposition}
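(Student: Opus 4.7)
The first perturbation $(c-x)\,\d\tilde\mu_1(x)=\d\mu(x)\,P_1(x)$ is a dual Uvarov transformation $L(x)\,\d\tilde\mu(x)=\d\mu(x)\,R(x)$ in the sense of Section~\ref{Dual Perturbation}, with scalar $L(x)=c-x$ (so $q=1$, $M^L=1$ with trivial eigenvectors at $\lambda_1=c$) and $R(x)=P_1(x)$. A direct computation yields $\det P_1(x)=(x-d)^2$, so $\rho=d$ is the unique eigenvalue with algebraic multiplicity two; since $P_1(d)$ has rank one, the geometric multiplicity is also two and both partial multiplicities equal one. Thus no Jordan-chain derivatives are needed, and one may take $\vec{\boldsymbol r}_1=\boldsymbol e_2$, $\vec{\boldsymbol r}_2=\boldsymbol e_3$ together with $\cev{\boldsymbol r}_1=\boldsymbol e_1^{\top}$, $\cev{\boldsymbol r}_2=\boldsymbol e_2^{\top}$, giving $M^R=2$ and placing us in the simple-eigenvalue regime of Theorem~\ref{Th: ChristFormulasDual}. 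The plan is to apply that theorem and verify that the resulting specialization matches the claimed formulas.

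Translating the ingredients of Definition~\ref{Def: DualNotation} into Jacobi–Piñeiro data, I find $\mathbb{A}_n^{(k)}=\cev{\boldsymbol r}_k A_n(d)=A_n^{(k)}(d)=P_n^{(k)}(d)$ for $k=1,2$, which furnishes the first two rows of $\tau_n$. Since $q=1$, $\mathbb{C}_n^{(1)}=\cev{\boldsymbol l}_1 B_n(c)$ reduces to the scalar Cauchy transform $C_n(c)$, while the delta contribution at $x=c$ in the decomposition \eqref{Eq:SimpleEigMeasure} produces $\mathbb{W}_n^{(1)}=\cev{\boldsymbol l}_1 L'(c)\vec{\boldsymbol l}_1\,\boldsymbol\xi(c)\,A_n(c)=-\sum_{j=1}^{3}\xi_j(c)\,P_n^{(j)}(c)$ after absorbing $L'(c)=-1$. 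Consequently $\mathbb{C}_n^{(1)}-\mathbb{W}_n^{(1)}=C_n(c)+\sum_{j=1}^{3}\xi_j(c)\,P_n^{(j)}(c)=W_n(c)$, which supplies the third row of the proposition's $\tau_n$ once the index range $n-M^L,\dots,n+M^R-1$ is shifted to $n,n+1,n+2$ in line with the proposition's relabelling $\tilde P_{n+1}\leftrightarrow\tilde B_{n-1}$.

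With these identifications Theorem~\ref{Th: ChristFormulasDual} delivers both formulas. For type~I, the sum $\sum_{\tilde a} R^{-1}_{\tilde a,a}(x)(\,\cdot\,)$ in the theorem, upon left multiplication by $R(x)=P_1(x)$, returns the $a$-th component of $R(x)\tilde A_{n-1}(x)=\bigl((x-d)\tilde P_{n-1}^{(2)},\,(x-d)\tilde P_{n-1}^{(3)},\,\tilde P_{n-1}^{(1)}\bigr)^{\top}$, yielding exactly the stated $4\times4$ determinant. For type~II the factor $L(y)=c-y$ accounts for the $(x-c)$ prefactor (after identifying $y$ with $x$), and expanding the kernel column $\mathbb K_{C,1}^{[\cdot],(1)}(y)+\tfrac{1}{y-c}$ together with $\mathbb K_{1}^{[\cdot],(k)}(y)$ along the basis $\{B_i(x)\}_{i=0}^{n-1}$ via the Christoffel–Darboux definition produces the finite sum over $i$ with the stated $3\times3$ determinantal coefficients, plus a constant correction coming from the residue at the simple pole $1/(y-c)$. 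The principal obstacle is bookkeeping of signs and index shifts—reconciling $L'(c)=-1$, the theorem's indices versus the proposition's, and the ordering of the two Jordan chains at the double eigenvalue $d$—after which the proposition follows as an immediate corollary of Theorem~\ref{Th: ChristFormulasDual}, with no further analytic input required beyond checking that the measure $\d\tilde\mu_1$ is locally integrable at $x=0,1$ for the admissible parameter ranges, in the spirit of Remark~\ref{Remark: locally integrable}.
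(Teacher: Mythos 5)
Your proposal is correct and follows essentially the same route as the paper: both reduce the statement to the dual (transposed) perturbation machinery of Section~\ref{Dual Perturbation} and Theorem~\ref{Th: ChristFormulasDual}, after computing $\det P_1(x)=(x-d)^2$, identifying the two simple Jordan chains at $\rho=d$ and the scalar factor $L(x)=c-x$, and translating the quantities of Definition~\ref{Def: DualNotation} into $P_n^{(k)}(d)$ and $W_n(c)=C_n(c)+\sum_j\xi_j(c)P_n^{(j)}(c)$. The only difference is presentational: the paper makes the normalization change explicit by writing the modified connection matrix $\Omega$ (with $-1$ on its superdiagonal, since $B$ is kept monic and $L$ has leading coefficient $-1$) and the correspondingly adjusted connection and CD-kernel identities, where you absorb the same adjustments into the sign and index bookkeeping.
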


\begin{proof}
 The measure   $\d \tilde{\mu}_1$ corresponds to the dual perturbation described in \S \ref{Dual Perturbation}, where the terms $B(x)$ are kept monic. Moreover, the left perturbation polynomial has leading coefficient $-1$. These considerations lead to modifications in the calculations. For this perturbation, the connection matrix takes the form: 
\[
		\Omega = \begin{bNiceMatrix}
			\Omega_{0,0} & -1 & 0 & \Cdots[shorten-end=3pt] \\
			\Omega_{1,0} & \Omega_{1,1} & -1 & 0 & \Cdots[shorten-end=3pt] \\
			\Omega_{2,0} & \Omega_{2,1} & \Omega_{2,2} & -1 & 0 & \Cdots[shorten-end=3pt] \\
			0 & \Ddots[shorten-end=-25pt]  & & & \Ddots[shorten-end=-25pt]  & \Ddots[shorten-end=-5pt]  \\
			\Vdots[shorten-end=-2pt] & \Ddots[shorten-end=-5pt] & \phantom{i} & & & \phantom{i} & \phantom{i} 
		\end{bNiceMatrix},
\]
which yields the following connection formulas:
\begin{align*}
A(x)\Omega & = P_1(x)\tilde{A}(x), & (c-x)\tilde{C}(x) & = C(x)\Omega - \int_\Delta \d \tilde{\mu}_1(x) \tilde{A}(t).
\end{align*}

For the CD kernels and the mixed-type CD kernels, additional modifications also arise, in particular: 
\begin{multline*}
	P_1(x)\tilde{K}^{[n]}(x,y) = K^{[n]}(x,y)(c-y) + A(x) \left[ \Omega, \Pi_n \right]\tilde{B(y)}, \\
	(c-x)\tilde{K}^{[n]}_C(x,y) = K_C^{[n]}(x,y)(c-y) + C(x)\left[ \Omega, \Pi_n \right] -1 . 
\end{multline*} 

Taking these modifications into account, the subsequent calculations proceed analogously to those previously discussed.

\end{proof}

\begin{Proposition}
    For \( n > 1 \), the Christoffel-type formula for the type I multiple orthogonal polynomials associated with \( \d \tilde{\mu}_2 \) is:
    \begin{multline*}
    \begin{bNiceMatrix}
        \tilde{P}_{n-1}^{(1)}(x) \\[2pt]
        \tilde{P}_{n-1}^{(2)}(x) \\[2pt]
        \tilde{P}_{n-1}^{(3)}(x) 
    \end{bNiceMatrix} = \frac{1}{\tau_n} \sum_{i=0}^{n-3} \begin{bNiceMatrix}
        (x-c) P_{i}^{(2)}(x) \\[2pt]
        (x-c) P_{i}^{(3)}(x) \\[2pt]
        -P_{i}^{(1)}(x)
    \end{bNiceMatrix} \begin{vNiceMatrix}
        P_{i}(d) & D_{i}^{(2)}(c)+\xi^{(1)}(c) P_{i}(c) & D_{i}^{(3)}(c)+\xi^{(2)}(c) P_{i}(c) \\[3pt]
        P_{n-2}(d) & D_{n-2}^{(2)}(c)+\xi^{(1)}(c) P_{n-2}(c) & D_{n-2}^{(3)}(c)+\xi^{(2)}(c) P_{n-2}(c) \\[3pt]
        P_{n-1}(d) & D_{n-1}^{(2)}(c)+\xi^{(1)}(c) P_{n-1}(c) & D_{n-1}^{(3)}(c)+\xi^{(2)}(c) P_{n-1}(c)
    \end{vNiceMatrix} \\
    + \frac{1}{\tau_n} \left( \begin{vNiceMatrix}
        P_{n-2}(d) & D_{n-2}^{(3)}(c)+\xi^{(2)}(c) P_{n-2}(c) \\[3pt]
        P_{n-1}(d) & D_{n-1}^{(3)}(c)+\xi^{(2)}(c) P_{n-1}(c)
    \end{vNiceMatrix} + \begin{vNiceMatrix}
        P_{n-2}(d) & D_{n-2}^{(2)}(c)+\xi^{(1)}(c) P_{n-2}(c) \\[3pt]
        P_{n-1}(d) & D_{n-1}^{(2)}(c)+\xi^{(1)}(c) P_{n-1}(c) 
    \end{vNiceMatrix} \right).
    \end{multline*}
    For type II polynomials, we have:
    \[
    \tilde{P}_{n}(x) = \frac{1}{\tau_n (x-d)} \begin{vNiceMatrix}
        P_{n-2}(d) & D_{n-2}^{(2)}(c)+\xi^{(1)}(c) P_{n-2}(c) & D_{n-2}^{(3)}(c)+\xi^{(2)}(c) P_{n-2}(c) & P_{n-2}(x) \\[3pt]
        P_{n-1}(d) & D_{n-1}^{(2)}(c)+\xi^{(1)}(c) P_{n-1}(c) & D_{n-1}^{(3)}(c)+\xi^{(2)}(c) P_{n-1}(c) & P_{n-1}(x) \\[3pt]
        P_{n}(d) & D_{n}^{(2)}(c)+\xi^{(1)}(c) P_{n}(c) & D_{n}^{(3)}(c)+\xi^{(2)}(c) P_{n}(c) & P_{n}(x) \\[3pt]
        P_{n+1}(d) & D_{n+1}^{(2)}(c)+\xi^{(1)}(c) P_{n+1}(c) & D_{n+1}^{(3)}(c)+\xi^{(2)}(c) P_{n+1}(c) & P_{n+1}(x)
    \end{vNiceMatrix},
    \]
    where, in both cases,
    \[
    \tau_n = \begin{vNiceMatrix}
        P_{n-2}(d) & D_{n-2}^{(2)}(c)+\xi^{(1)}(c) P_{n-2}(c) & D_{n-2}^{(3)}(c)+\xi^{(2)}(c) P_{n-2}(c) \\[3pt]
        P_{n-1}(d) & D_{n-1}^{(2)}(c)+\xi^{(1)}(c) P_{n-1}(c) & D_{n-1}^{(3)}(c)+\xi^{(2)}(c) P_{n-1}(c) \\[3pt]
        P_{n}(d) & D_{n}^{(2)}(c)+\xi^{(1)}(c) P_{n}(c) & D_{n}^{(3)}(c)+\xi^{(2)}(c) P_{n}(c)
    \end{vNiceMatrix}.
    \]
\end{Proposition}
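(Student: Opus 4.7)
The plan is to specialize Theorem \ref{Th: ChrisFormulas} to the data $R(x) = P_2(x)$ and $L(x) = x - d$, i.e.\ the first (non-dual) form with $p = 3$ and $q = 1$. As a first step I would record the structural constants. Since $q = 1$, the left polynomial is scalar of degree $N^L = 1$, giving $l = 0$ and $M^L = 1$ with $\lambda_1 = d$ and trivial (scalar) left and right eigenvectors. For $R$, the matrix $P_2(c)$ has a two-dimensional nullspace and $\det P_2(x) = (c - x)^2$, so $N^R = 1$, $r = 1$ and $M^R = N^R p - r = 2$ by Proposition \ref{Prop2}. Although $\rho_1 = \rho_2 = c$ coincide, both partial multiplicities equal one and the hypothesis of \S\ref{S:Uvarov simple} applies. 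From the nullspaces of $P_2(c)$ one reads off $\vec r_1 = \boldsymbol{e}_2$, $\vec r_2 = \boldsymbol{e}_3$, $\cev r_1 = \boldsymbol{e}_1^{\top}$, $\cev r_2 = \boldsymbol{e}_2^{\top}$, and a direct computation of $R'(c)$ gives $\cev r_i R'(c) \vec r_i = -1$ for $i = 1, 2$.

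Next I would translate the quantities of Definition \ref{Def:NotationBD} into the Jacobi--Piñeiro notation. With $B_n = P_n$ scalar one obtains
\[
\mathbb{B}^{(1)}_n = P_n(d), \qquad \mathbb{D}^{(1)}_n = D^{(2)}_n(c), \qquad \mathbb{D}^{(2)}_n = D^{(3)}_n(c), \qquad \mathbb{W}^{(i)}_n = -P_n(c)\, \xi^{(i)}(c),
\]
so that $\mathbb{D}^{(i)}_n - \mathbb{W}^{(i)}_n = D^{(i+1)}_n(c) + \xi^{(i)}(c) P_n(c)$, exactly the combinations appearing in the stated $\tau_n$. The type II formula then follows at once from \eqref{Eq: ChrisFormulasB}: the sum over $\tilde b$ collapses to a single term because $q = 1$, the factor $L^{-1}_{1,1}(x) = (x - d)^{-1}$ produces the $(x-d)$ in the denominator, and direct substitution reproduces the claimed $4 \times 4$ determinant divided by $\tau_n(x - d)$.

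For type I I would apply \eqref{Eq: ChrisFormulasA}. Since each row of $P_2(x)$ has exactly one nonzero entry, with $R_{1,2} = R_{2,3} = c - x$ and $R_{3,1} = 1$, the sum over $\tilde a$ collapses for every $a$, producing the prefactors $(c - x) P_i^{(2)}(x)$, $(c - x) P_i^{(3)}(x)$ and $P_i^{(1)}(x)$ in the respective components. A Laplace expansion along the bottom (kernel) row of the $3 \times 3$ determinant, together with the expansions
\[
\mathbb{K}^{[n-3],(1)}_{\tilde a}(x) = \sum_{i=0}^{n-3} P^{(\tilde a)}_i(x) P_i(d), \qquad \mathbb{K}^{[n-3],(k)}_{D,\tilde a}(x) = \sum_{i=0}^{n-3} P^{(\tilde a)}_i(x) \bigl(D^{(k+1)}_i(c) + \xi^{(k)}(c) P_i(c)\bigr),
\]
allows the sum over $i$ to be pulled out. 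The surviving $3 \times 3$ minor, after the (cyclic, hence even) row reordering that places row $i$ on top, is precisely the determinant appearing in the Proposition. The residual pole pieces $\vec r_{k,\tilde a}/(x - \rho_k)$ multiply the corresponding $R_{a,\tilde a}(x)$ factor: using $\rho_1 = \rho_2 = c$ together with $(c - x)/(x - c) = -1$, and noting that among the relevant entries only $r_{1,2} = r_{2,3} = 1$ are nonzero, these contribute the two $2 \times 2$ determinants to the first and second components respectively, while the case $a = 3$ produces no residual term because $r_{k,1} = 0$ for both $k$.

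The main obstacle I anticipate is the careful bookkeeping of signs and normalizations. The leading coefficient of $P_2$ carries entries $-1$ where Condition \eqref{Cond: CondicionesMatricesLideresFinales_1} prescribes $+1$, which must be absorbed into the harmless scalar multiplication of the perturbation by an upper-triangular matrix mentioned immediately after that condition. In parallel, one must keep track of the $(x - c)$ versus $(c - x)$ normalisations of the prefactors and of the sign $(c - x)/(x - c) = -1$ appearing in the residual pole terms. Finally, because $\rho_1 = \rho_2 = c$ coalesce numerically while the two Jordan chains remain independent, one must verify that no higher-order Jordan structure is implicitly invoked, so that the simple-eigenvalue formalism of \S\ref{S:Uvarov simple} applies verbatim rather than the generalization of \S\ref{S:Uvarov multiple zeros}.
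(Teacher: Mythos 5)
Your proposal is correct and follows the same route as the paper: the paper's own proof simply invokes Theorem \ref{Th: ChrisFormulas} with Definitions \ref{Def:NotationBD} and \ref{Def: NotationK}, noting exactly the two points you work out in detail, namely that $\boldsymbol{e}_1^\top P_2'(c)\boldsymbol{e}_2=\boldsymbol{e}_2^\top P_2'(c)\boldsymbol{e}_3=-1$ (producing the plus signs in $D_n^{(k+1)}(c)+\xi^{(k)}(c)P_n(c)$) and that the nonstandard normalization of the leading coefficient of $P_2$ yields the global minus sign in the type~I formula. Your additional bookkeeping (identifying $M^R=2$, $M^L=1$, the eigenvectors, the collapse of the sums over $\tilde a$ and $\tilde b$, and the treatment of the coincident point $\rho_1=\rho_2=c$ with two length-one Jordan chains) is a faithful, more explicit expansion of that same argument.
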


\begin{proof}
    This result follows from Theorem \ref{Th: ChrisFormulas}, using Definitions \ref{Def:NotationBD} and \ref{Def: NotationK}. Two key observations are:
    \begin{enumerate}
        \item The terms \( \boldsymbol{e}_1^\top P_2'(c) \boldsymbol{e}_2 \) and \( \boldsymbol{e}_2^\top P_2'(c) \boldsymbol{e}_3 \) reduce to \(-1\), explaining the plus signs in \( D_{n}^{(2)}(c)+\xi^{(1)}(c) P_{n}(c) \) and \( D_{n}^{(3)}(c)+\xi^{(2)}(c) P_{n}(c) \).
        \item The normalization of the leading coefficient of \( P_2(x) \) introduces a global minus sign in the formula for type I polynomials.
    \end{enumerate}
\end{proof}

For the case \( c = 1 \) and \( d = 0 \), explicit formulas can be derived for the Jacobi–Piñeiro  multiple orthogonal polynomials of mixed type and their Cauchy transforms. The type II Jacobi–Piñeiro polynomials evaluated at the endpoints are:
\[
\begin{aligned}
    P_n(0) & = (-1)^n \prod_{a=1}^3 \frac{(\alpha_a+1)_{n_a}}{(\alpha_a+\beta+1)_{n_a}}, & 
    P_n(1) & = \frac{(\beta+1)_{n}}{\prod_{a=1}^3 (\alpha_a+\beta+n+1)_{n_a}},
\end{aligned}
\]
and for type I polynomials, we have:
\[
\begin{aligned}
    P_n^{(a)}(0) & = C_n^{(a),0} = \frac{(-1)^{n-1} \prod_{q=1}^{3} (\alpha_q+\beta+n)_{n_q}}{(n_a-1)! \prod_{q=1,q\neq a}^{3} (\alpha_q-\alpha_a)_{n_q}} \frac{\Gamma(\alpha_a+\beta+n)}{\Gamma(\beta+n)\Gamma(\alpha_a+1)}, \\ 
    P_n^{(a)}(1) & = \sum_{l=0}^{n_a-1} C_{n}^{(a),l}. 
\end{aligned}
\]

\begin{Proposition}
    The Cauchy transform for the Jacobi–Piñeiro polynomials can be explicitly computed at \( x = 1\). In particular:
    \[
    C_n(1) = \Gamma(\beta) \sum_{a=1}^3 \sum_{l=0}^{n_a-1} \frac{C_n^{(a),l}}{(\alpha_a+l+1)_\beta}, \quad 
    D_n^{(a)}(1) = \Gamma(\beta) \sum_{l_1=0}^{n_1} \sum_{l_2=0}^{n_2} \sum_{l_3=0}^{n_3} \frac{C_{n}^{l_1,l_2,l_3}}{(\alpha_a+l_1+l_2+l_3)_{\beta}}.
    \]
\end{Proposition}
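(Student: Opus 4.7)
The strategy is a direct computation from the integral definitions. Both identities are obtained by substituting the explicit expansions of the Jacobi--Piñeiro polynomials into the defining integrals of $C_n(z)$ and $D_n^{(a)}(z)$, interchanging the finite sums with the integrals, and reducing each monomial integral to a Beta function.

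First I would address $D_n^{(a)}(1)$. By definition, $D_n^{(a)}(z)=\int_{0}^{1} P_n(x)\,\dfrac{x^{\alpha_a}(1-x)^\beta}{z-x}\,\d x$; at $z=1$ the factor $(1-x)^\beta/(1-x)$ collapses to $(1-x)^{\beta-1}$, and inserting the triple-sum representation of $P_n$ together with Fubini gives
\[
D_n^{(a)}(1)=\sum_{l_1=0}^{n_1}\sum_{l_2=0}^{n_2}\sum_{l_3=0}^{n_3}C_n^{l_1,l_2,l_3}\int_0^1 x^{\alpha_a+l_1+l_2+l_3}(1-x)^{\beta-1}\,\d x .
\]
Each inner integral is a Beta function $B(\alpha_a+l_1+l_2+l_3+1,\beta)=\Gamma(\alpha_a+l_1+l_2+l_3+1)\Gamma(\beta)/\Gamma(\alpha_a+l_1+l_2+l_3+1+\beta)$, and applying the identity $(x)_\beta=\Gamma(x+\beta)/\Gamma(x)$ rewrites the ratio of Gammas as the inverse Pochhammer factor appearing in the statement, yielding the claimed closed form after pulling out the common factor $\Gamma(\beta)$.

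For $C_n(1)$ the argument is parallel but uses the type~I expansion. Since $q=1$, the Cauchy transform reduces to the scalar sum $C_n(z)=\sum_{a=1}^{3}\int_0^1 \dfrac{x^{\alpha_a}(1-x)^\beta}{z-x}A_n^{(a)}(x)\,\d x$; setting $z=1$, replacing $A_n^{(a)}(x)$ by $\sum_{l=0}^{n_a-1}C_n^{(a),l}x^l$, and evaluating $\int_0^1 x^{\alpha_a+l}(1-x)^{\beta-1}\d x$ as above produces precisely the stated triple sum with the Pochhammer denominator $(\alpha_a+l+1)_\beta$.

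The only substantive point of care is convergence: the Beta integrals require $\beta>0$, a condition slightly stronger than the standing hypothesis $\beta>-1$; under this assumption the interchange of sum and integral is immediate because all sums are finite. The remaining work is purely bookkeeping, converting Gamma ratios into Pochhammer symbols using $(x)_\beta=\Gamma(x+\beta)/\Gamma(x)$, and the main potential pitfall is keeping the off-by-one shift between the integrand exponent and the base of the Pochhammer symbol consistent with the convention fixed earlier in the paper.
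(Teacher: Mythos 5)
Your proposal is correct and follows essentially the same route as the paper's own proof: insert the explicit polynomial expansions, interchange the finite sums with the integral, and evaluate each term as a Beta integral $\int_0^1 x^{\alpha_a+L}(1-x)^{\beta-1}\,\d x=\Gamma(\beta)\Gamma(\alpha_a+L+1)/\Gamma(\alpha_a+L+1+\beta)$, valid for $\beta>0$. The off-by-one issue you flag is real: your (correct) computation yields the Pochhammer base $\alpha_a+l_1+l_2+l_3+1$, consistent with the $C_n(1)$ formula, whereas the stated $D_n^{(a)}(1)$ formula (and the paper's own intermediate Gamma identity) drops the $+1$, which appears to be a typo in the paper rather than a defect of your argument.
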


\begin{proof}
    Consider the integral
    \[
    \int_0^1 x^{l_1+l_2+l_3+\alpha_2} (1-x)^{\beta-1} \d x = \frac{\Gamma(\beta)\Gamma(\alpha_2+l_1+l_2+l_3)}{\Gamma(\alpha_2+\beta+l_1+l_2+l_3+1)} = \frac{\Gamma(\beta)}{(\alpha_2+l_1+l_2+l_3)_{\beta}},
    \]
    which is valid for \( \beta > 0 \). The Cauchy transform of the type II Jacobi–Piñeiro polynomials can be expressed as:
    \[
    D_n^{(a)}(1; \alpha_1, \alpha_2, \alpha_3, \beta) = \int_0^1 P_n(x; \alpha_1, \alpha_2, \alpha_3, \beta) \frac{\d \mu_a(x)}{1-x} = \sum_{l_1=0}^{n_1} \sum_{l_2=0}^{n_2} \sum_{l_3=0}^{n_3} C_{n}^{l_1,l_2,l_3} \int_\Delta x^{l_1+l_2+l_3} \frac{\d \mu_a(x)}{1-x}.
    \]
    Substituting the integral result yields the desired relation. The first relation for \( C_n(1) \) follows similarly.
\end{proof}
    \section{On the Existence of Perturbed Orthogonality}\label{S:existence}
In Theorem \ref{Th: Existence}, we establish that if the orthogonality condition holds, then $\tau_n \neq 0$ for $n \geq M^R$\footnote{Throughout this section, we assume that the eigenvalues of $R(x)$ and $L(x)$ are simple. However, the general result can be analogously derived without this assumption.}. Although the converse implication has been proven in the settings of Christoffel and Geronimus perturbations, namely, $\mathrm{d}\hat{\mu}(x) = \mathrm{d}\mu(x) R(x)$ and $\mathrm{d}\check{\mu}(x) R(x) = \mathrm{d}\mu(x)$, respectively, it remains unresolved for the general Uvarov perturbation. A partial result, however, can be obtained under an additional assumption on the structure of the matrix $\Omega$. We now present the following result:

\begin{Theorem} \label{Th: Existence II}
    The perturbed orthogonality condition holds if the following criteria are satisfied:
    \begin{itemize}
        \item The connection matrix $\Omega$ admits an $LU$ factorization.
        \item $\tau_n \neq 0$ for all $n \geq M^R$.
        \item There exist \(M^R\) vector polynomials, \(\check{A}_n(x)\), of strictly increasing degree,
	\[
	\begin{aligned}
		\tilde{A}_0(x) & = \begin{bNiceMatrix}
		    \alpha \\ 0 \\ \Vdots \\ 0
		\end{bNiceMatrix}, & \tilde{A}_1(x) & = \begin{bNiceMatrix}
		    * \\ \alpha \\ 0 \\ \Vdots \\ 0
		\end{bNiceMatrix}, & \dots & & \tilde{A}_p(x) & = \begin{bNiceMatrix}
		    * \\ \Vdots \\ * \\ \alpha
		\end{bNiceMatrix}, & \dots & & \tilde{A}_{M^R-1}(x) & = \begin{bNiceMatrix}
		    * x^{N^R - 1}  \\ \Vdots \\ * x^{N^R - 1} \\ \alpha x^{N^R - 1} \\ * x^{N^R - 2} \\ \Vdots \\ * x^{N^R - 2}
		\end{bNiceMatrix},
	\end{aligned}
	\]
	where $\alpha$ is a nonzero constant and $*$ denotes a constant. For $\tilde{A}_{M^R-1}(x)$, the entry immediately following $\alpha$ is located at position $p-r+1$. These vector polynomials satisfy the relation $\tilde{A}(x)\Omega = R(x) A(x)$ for $n < M^R$.
    \end{itemize}
\end{Theorem}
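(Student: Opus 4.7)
The strategy is to construct the perturbed type I and type II families $\tilde{A}_n(x)$ and $\tilde{B}_n(x)$ explicitly from the given data and then verify the biorthogonality relation
\[
\int_\Delta \tilde{B}(x)\,\d\tilde{\mu}(x)\,\tilde{A}(x) = I,
\]
which is equivalent to the existence of the Gauss--Borel factorization of $\tilde{\mathscr{M}}$ and hence to the existence of the perturbed orthogonality. For $n\geq M^R$, I would define $\tilde{A}_{n-1}(x)$ and $\tilde{B}_n(x)$ by the Christoffel-type formulas of Theorem~\ref{Th: ChrisFormulas}, which are well-posed thanks to the hypothesis $\tau_n\neq 0$ and automatically deliver the correct step-line degree and monic structure. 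For $n<M^R$ I take the $\tilde{A}_n(x)$ from the third hypothesis, and recover the $\tilde{B}_n(x)$ from the connection identity $\Omega B(x)=\tilde{B}(x)L(x)$; its solvability in the low-$n$ range is secured by the $LU$ factorization of $\Omega$ together with the prescribed form of the initial $\tilde{A}_n$ and the matrix-polynomial divisibility criterion of Theorem~\ref{Theorem}. By construction, both connection identities $\tilde{A}(x)\Omega=R(x)A(x)$ and $\Omega B(x)=\tilde{B}(x)L(x)$ hold.

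The central computation then chains these identities with the Uvarov defining relation $L(x)\,\d\mu(x)=\d\tilde{\mu}(x)\,R(x)$ and the unperturbed biorthogonality $\int_\Delta B(x)\,\d\mu(x)\,A(x)=I$:
\[
\int_\Delta \tilde{B}(x)\,\d\tilde{\mu}(x)\,\tilde{A}(x)\,\Omega = \int_\Delta \tilde{B}(x)\,\d\tilde{\mu}(x)\,R(x)A(x) = \int_\Delta \tilde{B}(x)L(x)\,\d\mu(x)\,A(x) = \Omega\int_\Delta B(x)\,\d\mu(x)\,A(x) = \Omega.
\]
Setting $M\coloneq\int_\Delta \tilde{B}(x)\,\d\tilde{\mu}(x)\,\tilde{A}(x)$, this reads $(M-I)\Omega=0$. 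The $LU$-factorization hypothesis $\Omega=\Omega_L\Omega_U$, with $\Omega_L$ banded lower unitriangular and $\Omega_U$ upper triangular with nonzero diagonal entries, makes $\Omega$ right-cancellable in the algebra of infinite matrices: $\Omega_L$ admits a formal lower unitriangular inverse and $\Omega_U$ a formal upper triangular inverse, and in every resulting matrix entry the sums involved are finite thanks to the triangular supports of the factors (so associativity holds entrywise). Applying $\Omega_U^{-1}$ and then $\Omega_L^{-1}$ from the right therefore forces $M=I$, which is the required biorthogonality and yields the Gauss--Borel factorization of $\tilde{\mathscr{M}}$.

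The main obstacle is precisely this right-cancellation of $\Omega$, which is the reason the $LU$-factorization has to be imposed as a separate assumption rather than derived from $\tau_n\neq 0$: in the pure Christoffel and Geronimus settings the analogous structural property is available from the non-vanishing of the $\tau$--determinants through the divisibility criterion for matrix polynomials, but in the Uvarov setting the simultaneous action of a left and a right perturbation factor obstructs that derivation, leaving the converse as the open problem mentioned in the Introduction. A secondary difficulty is that for $n<M^R$ the connection relation $\tilde{A}(x)\Omega=R(x)A(x)$ does not uniquely determine $\tilde{A}_n(x)$, so the prescribed form of the initial vector polynomials plays the role of boundary data; together with the $LU$ structure of $\Omega$ and the non-vanishing of $\tau_n$ for $n\geq M^R$, it is exactly what guarantees that the recovered $\tilde{B}_n(x)$ are monic polynomials of the correct step-line shape for every $n\geq 0$, thereby closing the construction.
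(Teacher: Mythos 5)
Your overall architecture coincides with the paper's: both proofs hinge on the chain
\[
\int_\Delta \tilde{B}(x)\,\d\tilde{\mu}(x)\,\tilde{A}(x)\,\Omega=\Omega\int_\Delta B(x)\,\d\mu(x)\,A(x)=\Omega,
\]
followed by right-cancellation of $\Omega$ through its $LU$ factorization, and your entrywise justification of that cancellation (triangular banded factors, finite sums, associativity) is a welcome elaboration of a step the paper leaves implicit. The genuine gap is in the construction of the families. You define $\tilde{A}_{n-1}$ and $\tilde{B}_n$ for $n\ge M^R$ by the Christoffel formulas of Theorem \ref{Th: ChrisFormulas} and assert that these ``automatically deliver the correct step-line degree and monic structure.'' They do not automatically do so: those formulas a priori produce rational expressions (they carry prefactors $R_{a,\tilde a}(x)$, postfactors $L^{-1}_{\tilde b,b}(x)$ and pole terms $\vec{\boldsymbol{r}}_i/(x-\rho_i)$), and they were derived under the assumption that the perturbed orthogonality already exists, so using them as definitions requires an independent verification that the outputs are polynomials with the step-line degree profile. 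That verification is precisely where the paper spends its effort: it constructs $\tilde{B}(x)$ by showing that $\Omega B(x)$ is right-divisible by $L(x)$ via Theorem \ref{Theorem} (a step that needs no $LU$ hypothesis), and then proves by a block-by-block induction --- importing and correcting arguments from Theorem 5.2 of \cite{Manas_Rojas_Christoffel} and Theorem 3.1 of \cite{Manas_Rojas_Geronimus} --- that $\tilde{\mathcal{B}}_n(x)$ and $\tilde{\mathcal{A}}_n(x)$ share the degree structure of $\mathcal{B}_n(x)$ and $\mathcal{A}_n(x)$. Without that degree control the identity $M=I$ alone does not yield the perturbed orthogonality, since biorthogonality of two families with the wrong degree profile is not equivalent to the Gauss--Borel factorization of $\tilde{\mathscr{M}}$.

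A secondary inaccuracy: you invoke the $LU$ factorization of $\Omega$ to secure the solvability of $\Omega B(x)=\tilde{B}(x)L(x)$ in the low-$n$ range, but that relation holds uniformly in $n$ by the divisibility criterion alone (every row of $\Omega B(x)$ annihilates the Jordan chains of $L$); the $LU$ hypothesis enters only at the final cancellation.
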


\begin{proof}
    Define the polynomial family $\tilde{\tilde{B}}(x) \coloneqq \Omega B(x)$. By construction, the entries of $\Omega$ are chosen so that 
    \begin{equation*}
        \Omega B(\lambda_i) \vec{\boldsymbol{l}_i} = 0.
    \end{equation*}
    According to Theorem \ref{Theorem}, we conclude that $\tilde{\tilde{B}}(x)$ is divisible by $L(x)$, as both polynomials share the same spectral data. Thus, we can write
    \[
        \tilde{\tilde{B}}(x) = \tilde{B}(x) L(x),
    \]
    where $\tilde{B}(x)$ is an arbitrary matrix polynomial. Hence,
    \begin{equation*}
        \Omega B(x) = \tilde{B}(x) L(x).
    \end{equation*}
    Considering the first $q \times q$ block in this relation gives 
    \[
        \left[ \Omega_{0,0} \right]_q \mathcal{B}_0 + \dots + \left[ \Omega_{0,N^L-1} \right]_q \mathcal{B}_{N^L-1}(x) + \left[ \Omega_{0,N^L} \right]_q \mathcal{B}_{N^L}(x) = \tilde{\mathcal{B}}_0(x) L(x),
    \]
    where $\left[ \Omega_{i,j} \right]_q$ denotes a block of size $q \times q$ located at the $(i,j)$ entry of the matrix $\Omega$. As established in Theorem 5.2 of \cite{Manas_Rojas_Christoffel}, from this relation one deduces that $\tilde{\mathcal{B}}_0(x)$ is a lower triangular, invertible matrix independent of $x$. 

    Although not explicitly mentioned in \cite{Manas_Rojas_Christoffel}, the same argument can be generalized to analyze the degree structure of $\tilde{\mathcal{B}}_n(x)$ for $n \in \N_0$. Consequently, $B(x)$ and $\tilde{B}(x)$ share the same degree structure. By this, we mean that both $\mathcal{B}_n(x)$ and $\tilde{\mathcal{B}}_n(x)$ have diagonal entries of degree $n$, entries above the main diagonal of degree at most $n-1$, and entries below the diagonal of degree at most $n$.

    Next, consider the equality 
    \[
        \tilde{A}(x)\Omega = R(x) A(x),
    \]
    where no further assumptions are made on the family $\tilde{A}(x)$ apart from the third criterion of the theorem. In Theorem 3.1 of \cite{Manas_Rojas_Geronimus}, it was shown that the vectors from $\tilde{A}_{M^R}(x)$ to $\tilde{A}_{M^R+p-1}(x)$ are uniquely determined by the third criterion together with the equation $\tilde{A}(x)\Omega = R(x) A(x)$. Moreover, the corresponding vectors $\tilde{A}_{M^R}(x), \dots, \tilde{A}_{M^R+p-1}(x)$ share the same degree structure with the corresponding non-tilda vectors. 

    A minor oversight occurred in \cite{Manas_Rojas_Geronimus}: the conclusion that $\tilde{A}(x)$ and $A(x)$ share the same degree structure requires an intermediate argument. The argument is inductive: once the first $M^R$ vectors are known, one can construct the next $p$. Assuming now that the first $M^R + np$ vectors possess the correct degree structure, the same reasoning as in Theorem 3.1 of \cite{Manas_Rojas_Geronimus} allows construction of the next $p$ vectors with the required structure. This inductive step ensures that $\tilde{A}(x)$ and $A(x)$ indeed share the same degree structure.

    Finally, we establish the bi-orthogonality of these polynomial families:
    \[
        \int_\Delta \tilde{B}(x) \d \tilde{\mu}(x) \tilde{A}(x) \Omega = \Omega \int_\Delta B(x)L^{-1}(x) \d \tilde{\mu}(x) R(x) A(x) = \Omega \int_\Delta B(x) \d \mu(x) A(x) = \Omega.
    \]
    Hence,
    \[
        \int_\Delta \tilde{B}(x) \d \tilde{\mu}(x) \tilde{A}(x) \Omega = \Omega.
    \]
    If $\Omega$ admits an $LU$ factorization, we obtain
    \[
        \int_\Delta \tilde{B}(x) \d \tilde{\mu}(x) \tilde{A}(x) = I.   
    \]
    Therefore, $\tilde{B}(x)$ and $\tilde{A}(x)$ constitute bi-orthogonal families of polynomials with increasing degree structure, satisfying the orthogonality conditions.
\end{proof}

\begin{Remark}
    The argument used in Theorem 3.1 of \cite{Manas_Rojas_Geronimus} proceeds as follows: from the relation
    \[
        \tilde{\mathcal{A}}_0(x)\left[ \Omega_{0,0} \right]_p  + \dots + \tilde{\mathcal{A}}_{N^R-1}(x)\left[ \Omega_{N^R-1,0} \right]_p + \tilde{\mathcal{A}}_{N^R}(x)\left[ \Omega_{N^R,0} \right]_p = R(x)\mathcal{A}_0, 
    \]
    where $\left[ \Omega_{i,j} \right]_p$ denotes a block of size $p \times p$ located at the $(i,j)$ entry of $\Omega$, one proves that the first $p$ unknown entries of $\tilde{A}(x)$ are determined. The subsequent block in the relation $\tilde{A}(x)\Omega = R(x) A(x)$ yields
    \[
        \tilde{\mathcal{A}}_0(x)\left[ \Omega_{0,1} \right]_p  + \dots + \tilde{\mathcal{A}}_{N^R}(x)\left[ \Omega_{N^R,1} \right]_p + \tilde{\mathcal{A}}_{N^R+1}(x)\left[ \Omega_{N^R+1,1} \right]_p = R(x)\mathcal{A}_1. 
    \] 
    From this relation, and knowing the first $M^R+p$ vectors, one can determine the next $p$ vectors of $A(x)$ using the same reasoning as in Theorem 3.1 of \cite{Manas_Rojas_Geronimus}. This procedure extends to any $n$, provided that the previous $M^R + (n-1)p$ vectors possess the appropriate structure.
\end{Remark}

\begin{Remark}
    Consider an arbitrary $n \times n$ matrix $\Omega$ satisfying  
    \[
     M \Omega = \Omega, 
    \]
    where $M$ is free. Then $M = I_n$ if and only if $\Omega$ is invertible. For semi-infinite matrices, if $\Omega$ admits an $LU$ factorization we likewise obtain $M = I$. However, are there less restrictive conditions on $\Omega$ ensuring that $M = I$ is the only possibility? Moreover, is the assumption that $\Omega$ admits an $LU$ factorization independent, or could it be derived from the first and second criteria of the theorem? The authors do not know the answers to these questions, and for this reason we regard the result as partial.
\end{Remark}

The preceding theorem, combined with the Christoffel formulas previously derived, indicates that the existence of orthogonality is closely tied to the existence of explicit formulas for $\tilde{A}(x)$ and $\tilde{B}(x)$ for $n < M^R$. As noted in Remark \ref{Rm: n<M^R}, we have:
\begin{align*}
    \tilde{B}^{(b)}_0(x) & = \frac{1}{\tau_0} \sum_{\tilde{b}=1}^q\begin{vNiceMatrix}
        \mathbb{B}_0^{(1)} & \Cdots & \mathbb{B}_0^{(M^L)} & B_0^{(\tilde{b})}(x) \\
        \Vdots & & \Vdots & \Vdots \\
        \mathbb{B}_{M^L}^{(1)} & \Cdots & \mathbb{B}_{M^L}^{(M^L)} & B_{M^L}^{(\tilde{b})}(x)
    \end{vNiceMatrix} L^{-1}_{\tilde{b},b}(x), & \tau_0 & = \begin{vNiceMatrix}
        \mathbb{B}_0^{(1)} & \Cdots & \mathbb{B}_0^{(M^L)} \\
        \Vdots & & \Vdots \\
        \mathbb{B}_{M^L-1}^{(1)} & \Cdots & \mathbb{B}_{M^L-1}^{(M^L)}
    \end{vNiceMatrix}.
\end{align*}

For the case $n = 1$, we encounter $M^L + 1$ unknown components of $\Omega$, while $M^L$ equations are obtained from:
\[
    \begin{bNiceMatrix}
        \Omega_{1,0} & \Cdots & \Omega_{1,M^L}
    \end{bNiceMatrix} \begin{bNiceMatrix}
        \mathbb{B}_{0}^{(i)} \\ \Vdots \\ \mathbb{B}_{M^L}^{(i)}
    \end{bNiceMatrix} = -\mathbb{B}_{M^L+1}^{(i)}.
\]
An additional equation is derived by considering the relation:
\[
    \int_\Delta \tilde{B}_1(x) \, \mathrm{d} \tilde{\mu}(x) \, (X_{[p]}(x) \boldsymbol{e}_0) = 0,
\]
where $\boldsymbol{e}_0 = \left[ \begin{smallmatrix} 1 & 0 & 0 & \cdots \end{smallmatrix} \right]$. This leads to:
\[
    0 = \int_\Delta \tilde{B}_1(x)  L(x) \left( \mathrm{d} \mu(x) R^{-1}(x) + \sum_{i=1}^{M^R}\boldsymbol{\xi}_{i}(x) \cev{\boldsymbol{r}}_{i}\delta(x-\rho_i)  \right)(X_{[p]}(x) \boldsymbol{e}_0).
\]
\begin{Definition}
    We introduce the following constants:
    \begin{equation*}
        \mathbb{I}_{i,j} = \int_\Delta B_i(x) \left( \d \mu(x) R^{-1}(x) + \sum_{i=1}^{M^R}\boldsymbol{\xi}_{i}(x) \cev{\boldsymbol{r}}_{i}\delta(x-\rho_i)  \right)(X_{[p]}(x) \boldsymbol{e}_j).
    \end{equation*}
\end{Definition}

With this definition, the previous relation can be rewritten as:
\begin{multline*}
    0 = \begin{bNiceMatrix}
        \Omega_{1,0} & \Cdots & \Omega_{1,M^L}
    \end{bNiceMatrix} \int_\Delta \begin{bNiceMatrix}
        B_0(x) \\ \Vdots \\ B_{M^L}(x)
    \end{bNiceMatrix} \left( \d \mu(x) R^{-1}(x) + \sum_{i=1}^{M^R} \boldsymbol{\xi}_i(x) \cev{\boldsymbol{r}}_i \delta(x - \rho_i) \right)(X_{[p]}(x) \boldsymbol{e}_0) \\
    + \int_\Delta B_{M^L+1}(x) \left( \d \mu(x) R^{-1}(x) + \sum_{i=1}^{M^R} \boldsymbol{\xi}_i(x) \cev{\boldsymbol{r}}_i \delta(x - \rho_i) \right)(X_{[p]}(x) \boldsymbol{e}_0),
\end{multline*}
which leads to the linear equation:
\begin{equation*}
    \begin{bNiceMatrix}
        \Omega_{1,0} & \Cdots & \Omega_{1,M^L}
    \end{bNiceMatrix} \begin{bNiceMatrix}
        \mathbb{I}_{0,0} \\ \Vdots \\ \mathbb{I}_{M^L,0}
    \end{bNiceMatrix} = - \mathbb{I}_{M^L+1,0}.
\end{equation*}

This provides a linear system with as many equations as unknowns, and the components of the matrix $\Omega$ are determined via:
\begin{equation*}
    \begin{bNiceMatrix}
        \Omega_{1,0} & \Cdots & \Omega_{1,M^L}
    \end{bNiceMatrix} \begin{bNiceMatrix}
        \mathbb{B}_{0}^{(1)} & \Cdots & \mathbb{B}_{0}^{(M^L)} & \mathbb{I}_{0,0} \\ \Vdots & & \Vdots & \Vdots \\ 
        \mathbb{B}_{M^L}^{(1)} & \Cdots & \mathbb{B}_{M^L}^{(M^L)} & \mathbb{I}_{M^L,0}
    \end{bNiceMatrix} = - \begin{bNiceMatrix}
        \mathbb{B}_{M^L+1}^{(1)} & \Cdots & \mathbb{B}_{M^L+1}^{(M^L)} & \mathbb{I}_{M^L+1,0}
    \end{bNiceMatrix}.
\end{equation*}

This yields a result analogous to the case $n \geq M^R$: the system is solvable provided that the determinant
\begin{equation*}
    \tau_1 \coloneqq \begin{vNiceMatrix}
         \mathbb{B}_{0}^{(1)} & \Cdots & \mathbb{B}_{0}^{(M^L)} & \mathbb{I}_{0,0} \\ \Vdots & & \Vdots & \Vdots \\ 
        \mathbb{B}_{M^L}^{(1)} & \Cdots & \mathbb{B}_{M^L}^{(M^L)} & \mathbb{I}_{M^L,0}
    \end{vNiceMatrix} \neq 0.
\end{equation*}

Note that the $\tau$-determinants were previously undefined for the regime $n < M^R$. For consistency, we maintain the same notation.

For general $n < M^R$, the orthogonality condition
\begin{align*}
    \int_\Delta \tilde{B}_n(x) \, \d \tilde{\mu}(x) \, (X_{[p]}(x) \boldsymbol{e}_j) &= 0, & \text{for} \quad j &\in \{ 0, \dots , n-1 \},
\end{align*}
leads to the linear system:
\begin{multline*}
    \begin{bNiceMatrix}
        \Omega_{n,0} & \Cdots & \Omega_{1,M^L+n}
    \end{bNiceMatrix} \begin{bNiceMatrix}
        \mathbb{B}_{0}^{(1)} & \Cdots & \mathbb{B}_{0}^{(M^L)} & \mathbb{I}_{0,0} & \Cdots & \mathbb{I}_{0,n-1} \\ \Vdots & & \Vdots & \Vdots & & \Vdots  \\ 
        \mathbb{B}_{M^L+n-1}^{(1)} & \Cdots & \mathbb{B}_{M^L+n-1}^{(M^L)} & \mathbb{I}_{M^L+n-1,0} & \Cdots & \mathbb{I}_{M^L+n-1,n-1}
    \end{bNiceMatrix} \\
    = - \begin{bNiceMatrix}
        \mathbb{B}_{M^L+n}^{(1)} & \Cdots & \mathbb{B}_{M^L+n}^{(M^L)} & \mathbb{I}_{M^L+n,0} & \Cdots & \mathbb{I}_{M^L+n,n-1}
    \end{bNiceMatrix},
\end{multline*}
with corresponding determinant
\begin{equation} \label{Eq: Tau_M^R}
    \tau_n \coloneqq \begin{vNiceMatrix}
        \mathbb{B}_{0}^{(1)} & \Cdots & \mathbb{B}_{0}^{(M^L)} & \mathbb{I}_{0,0} & \Cdots & \mathbb{I}_{0,n-1} \\ \Vdots & & \Vdots & \Vdots & & \Vdots  \\ 
        \mathbb{B}_{M^L+n-1}^{(1)} & \Cdots & \mathbb{B}_{M^L+n-1}^{(M^L)} & \mathbb{I}_{M^L+n-1,0} & \Cdots & \mathbb{I}_{M^L+n-1,n-1}
    \end{vNiceMatrix}.
\end{equation}

\begin{Proposition}
    Theorem \ref{Th: Existence} extends to the $\tau$-determinants defined in Equation \eqref{Eq: Tau_M^R}. Specifically, if orthogonality exists, then $\tau_n \neq 0$ for all $n \in \mathbb{N}_0$.
\end{Proposition}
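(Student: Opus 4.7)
The plan is to adapt the contradiction argument of Theorem~\ref{Th: Existence} to the regime $n < M^R$, but rather than exploiting a polynomial identity forced by the kernel formulas I will translate the vanishing of $\tau_n$ directly into an ambiguity of the recursion coefficients $\Omega_{n,k}$, and then invoke uniqueness of the Gauss--Borel factorization. Assume the perturbed orthogonality exists, so that $\tilde{\mathscr{M}} = \tilde{S}^{-1}\tilde{H}\tilde{\bar S}^{-\top}$ is available and the monic row polynomial $\tilde{B}_n(x)$ is uniquely determined; then suppose, for contradiction, that $\tau_n = 0$ for some $n \in \{0,\dots,M^R-1\}$.

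Since $\tau_n = \det T$ with $T$ the $(M^L+n)\times(M^L+n)$ matrix appearing in~\eqref{Eq: Tau_M^R}, there exists a nonzero left-nullvector $\boldsymbol{d}\in\mathbb{C}^{M^L+n}$ of $T$, equivalently
\begin{align*}
    \sum_{k=0}^{M^L+n-1} d_k\,\mathbb{B}_k^{(i)} &= 0, & i &= 1,\dots,M^L,\\
    \sum_{k=0}^{M^L+n-1} d_k\,\mathbb{I}_{k,l} &= 0, & l &= 0,\dots,n-1.
\end{align*}
Set $\Omega^\star_{n,k} \coloneqq \Omega_{n,k} + \varepsilon\, d_k$ for arbitrary $\varepsilon\in\mathbb{C}$; by construction, the deformed row vector solves the same inhomogeneous system that determined the original $(\Omega_{n,0},\dots,\Omega_{n,M^L+n-1})$.

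Define the competing row polynomial through
\[
\tilde{B}^\star_n(x)\,L(x) \coloneqq B_{M^L+n}(x) + \sum_{k=0}^{M^L+n-1}\Omega^\star_{n,k}\,B_k(x).
\]
The $\mathbb{B}$-block nullspace relations, combined with the divisibility criterion of Theorem~\ref{Theorem} (applied in the simple-eigenvalue form where each right Jordan chain of $L$ has length one), ensure that $\sum_k d_k B_k(x)$ is right-divisible by $L(x)$; hence $\tilde{B}^\star_n(x)$ is a genuine polynomial row vector, and a degree count using $\deg\det L = M^L$ shows $\tilde{B}^\star_n - \tilde{B}_n$ has step-line degree strictly less than $n$, so $\tilde{B}^\star_n$ shares the leading structure of $\tilde{B}_n$. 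Integrating $\tilde{B}^\star_n(x) L(x)$ against the auxiliary linear functional $\d\mu(x) R^{-1}(x) + \sum_i \boldsymbol{\xi}_i(x) \cev{\boldsymbol{r}}_i\delta(x-\rho_i)$ paired with $X_{[p]}(x)\boldsymbol{e}_l$ and applying the $\mathbb{I}$-block nullspace relations yields $\int_\Delta \tilde{B}^\star_n(x)\,\d\tilde{\mu}(x)\,X_{[p]}(x)\boldsymbol{e}_l = 0$ for every $l = 0,\dots,n-1$, the same orthogonality satisfied by $\tilde{B}_n$.

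Therefore $\tilde{B}^\star_n$ is a second monic polynomial of the same step-line degree satisfying the same orthogonality conditions, contradicting the uniqueness guaranteed by the Gauss--Borel factorization unless $\varepsilon\sum_k d_k B_k(x)\equiv 0$ for every $\varepsilon$, which together with the linear independence of $\{B_k\}_{k\in\mathbb{N}_0}$ forces $\boldsymbol{d}=0$. The principal obstacle is the divisibility step: deducing $L(x)\mid\sum_k d_k B_k(x)$ solely from the eigenvalue pairings $\sum_k d_k B_k(\lambda_i)\vec{\boldsymbol{l}}_i = 0$ depends on Theorem~\ref{Theorem}, and an extension beyond simple eigenvalues would require the generalized nullspace identities of Definition~\ref{Def: NotationGeneralise1} together with the full Jordan-chain version of the divisibility criterion, paralleling the bookkeeping of Proposition~\ref{Prop: LimitExistence}.
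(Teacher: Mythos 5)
Your argument is correct under the section's standing assumption of simple eigenvalues, but it takes a genuinely different route from the paper's. The paper's proof is a determinantal induction: writing the nonzero normalization constant $\alpha_n=\int_\Delta \tilde B_n(x)\,\d\tilde\mu(x)\,(X_{[p]}(x)\boldsymbol e_n)$ and substituting the solved entries of the $n$-th row of $\Omega$, one recognizes a Schur complement and obtains the identity $\alpha_n=\tau_{n+1}/\tau_n$, so that non-vanishing propagates from $\tau_n$ to $\tau_{n+1}$, with the boundary case $n=M^R-1$ settled by observing that the two competing expressions for $\tau_{M^R}$ (Definition \ref{Def: Taudeter} versus Equation \eqref{Eq: Tau_M^R}) solve the same linear system and hence agree. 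You instead run a non-uniqueness argument: a left null vector $\boldsymbol d$ of the $\tau_n$-matrix deforms the $n$-th row of $\Omega$ without disturbing the inhomogeneous system; the $\mathbb B$-block relations together with the (row-vector form of the) divisibility criterion of Theorem \ref{Theorem} give that $L(x)$ right-divides $\sum_k d_k B_k(x)$, so the deformed object is again a monic vector polynomial of step-line degree $n$; and the $\mathbb I$-block relations show it satisfies the same $n$ orthogonality conditions, contradicting the uniqueness supplied by the Gauss--Borel factorization of $\tilde{\mathscr M}$ unless $\boldsymbol d=0$. Your route is closer in spirit to the proof of Theorem \ref{Th: Existence} for $n\geq M^R$, with the contour-integral and kernel machinery replaced by divisibility plus moment-matrix uniqueness, and it avoids the Schur-complement computation; the price is the degree bookkeeping for the quotient $Q(x)$ with $Q(x)L(x)=\sum_k d_k B_k(x)$ and the appeal to a row-vector version of the divisibility theorem (which the paper itself also uses implicitly in Theorem \ref{Th: Existence II}). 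The paper's route is shorter and yields as a byproduct the identity $\tau_{n+1}=\alpha_n\tau_n$ linking the $\tau$-determinants to the normalization constants, which your argument does not produce. Both proofs, as written, are confined to simple eigenvalues, consistent with the footnote governing this section.
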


\begin{proof}
    Consider the case $n < M^R - 1$. If orthogonality holds, then
    \begin{equation*}
        \int_\Delta \tilde{B}_n(x) \, \d \tilde{\mu}(x) \, (X_{[p]}(x) \boldsymbol{e}_n) = \alpha_n,
    \end{equation*}
    for some $\alpha_n \neq 0$. Substituting the expressions for the components of $\Omega$, we obtain:
    \begin{multline*}
        \mathbb{I}_{n,n} - \begin{bNiceMatrix}
            \mathbb{B}_{M^L+n}^{(1)} & \Cdots & \mathbb{B}_{M^L+n}^{(M^L)} & \mathbb{I}_{M^L+n,0} & \Cdots & \mathbb{I}_{M^L+n,n-1} 
        \end{bNiceMatrix} \\
        \times 
        \begin{bNiceMatrix}
        \mathbb{B}_{0}^{(1)} & \Cdots & \mathbb{B}_{0}^{(M^L)} & \mathbb{I}_{0,0} & \Cdots & \mathbb{I}_{0,n-1} \\ \Vdots & & \Vdots & \Vdots & & \Vdots  \\ 
        \mathbb{B}_{M^L+n-1}^{(1)} & \Cdots & \mathbb{B}_{M^L+n-1}^{(M^L)} & \mathbb{I}_{M^L+n-1,0} & \Cdots & \mathbb{I}_{M^L+n-1,n-1}
    \end{bNiceMatrix}^{-1} \begin{bNiceMatrix}
        \mathbb{I}_{0,n} \\
        \Vdots \\
        \mathbb{I}_{n-1,n} 
    \end{bNiceMatrix} = \frac{\tau_{n+1}}{\tau_{n}}.
    \end{multline*}
    Since $\alpha_n \neq 0$, it follows that all $\tau_n \neq 0$. In the special case $n = M^R - 1$, we have $\alpha_{M^R-1} = \frac{\tau_{M^R}}{\tau_{M^R-1}}$, where both expressions for $\tau_{M^R}$, the one from Definition \ref{Def: Taudeter} and that from Equation \eqref{Eq: Tau_M^R}, solve the same linear system and thus must agree.
\end{proof}

The non-vanishing of these $\tau_n$ determinants for $n < M^R$ implies the existence of a sequence of vector polynomials $\check{B}_n(x)$ satisfying:
\[
    \begin{aligned}
	\int_\Delta \sum_{b=1}^q \tilde{B}_n^{(b)}(x) \, \mathrm{d}\tilde{\mu}_{b,a}(x) \, x^l &= 0, & a &\in \{1,\dots,p\}, & l &\in \left\{0,\dots, \left\lceil\frac{n-a+2}{p}\right\rceil-1\right\},
    \end{aligned}
\]
for all $n \in \{0, \dots, M^R - 1\}$. These orthogonality relations ensure the existence of $M^R$ vector polynomials $\tilde{A}(x)$ satisfying the second condition in Theorem \ref{Th: Existence II}. We thus establish the following result:

\begin{Theorem}
    Perturbed orthogonality exists if the following conditions are satisfied:
    \begin{itemize}
        \item $\tau_n \neq 0$ for all $n \in \mathbb{N}_0$;
        \item The connection banded matrix $\Omega$  admits an $LU$ factorization.
    \end{itemize}
\end{Theorem}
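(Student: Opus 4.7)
The plan is to reduce the theorem to Theorem \ref{Th: Existence II}. The present hypotheses already supply the first two of its three bullets, so the task is to exhibit $M^R$ initial vector polynomials $\tilde{A}_0(x),\dots,\tilde{A}_{M^R-1}(x)$ with the prescribed staircase of leading entries, satisfying $\tilde{A}(x)\Omega=R(x)A(x)$ on the first $M^R$ blocks. Everything needed to construct them is latent in the material preceding the statement: the non-vanishing of the extended $\tau_n$ from Equation \eqref{Eq: Tau_M^R} provides solvability of the linear systems that determine the first $M^R$ rows of $\Omega$, and the divisibility Theorem \ref{Theorem} guarantees that, once these rows are found, $\Omega B(x)$ is a right multiple of $L(x)$, so that the initial rows of $\tilde{B}(x)$ are genuine polynomials.

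First I would fix $n\in\{0,\dots,M^R-1\}$ and solve, using $\tau_n\neq 0$, the $(M^L+n)\times(M^L+n)$ linear system consisting of the $M^L$ equations coming from the right eigenvectors of $L(x)$ together with the $n$ biorthogonality equations against $X_{[p]}(x)\boldsymbol{e}_j$ for $j<n$, as displayed explicitly in the paragraphs preceding the statement. This determines the $n$-th row of $\Omega$ uniquely. Iterating over $n=0,\dots,M^R-1$ and extracting $\tilde{B}_n(x)$ from $\Omega B(x)=\tilde{B}(x)L(x)$ yields the type II orthogonality relations at the initial level,
\[
\int_\Delta \sum_{b=1}^q \tilde{B}_n^{(b)}(x)\,\d\tilde{\mu}_{b,a}(x)\,x^l=0,
\]
valid for $n<M^R$, $a\in\{1,\dots,p\}$ and $l\in\{0,\dots,\lceil(n-a+2)/p\rceil-1\}$, as observed immediately before the theorem.

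Next I would use these relations to produce the dual family $\tilde{A}_n(x)$ for $n<M^R$. A truncated Gauss--Borel factorization of the initial block of the perturbed moment matrix, whose principal minors are precisely the $\tau_n$ of Equation \eqref{Eq: Tau_M^R}, yields a unitriangular factor $\tilde{S}$, a unitriangular factor $\tilde{\bar{S}}$, and a diagonal factor $\tilde{H}$, and hence candidate vector polynomials $\tilde{A}_n(x)$ built from the corresponding blocks of $X_{[p]}^\top(x)\,\tilde{\bar{S}}^\top\tilde{H}^{-1}$. The unitriangularity of $\tilde{\bar{S}}$ furnishes exactly the leading-coefficient staircase prescribed in the third bullet of Theorem \ref{Th: Existence II}. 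That the resulting $\tilde{A}(x)$ satisfies $\tilde{A}(x)\Omega=R(x)A(x)$ on the first $M^R$ blocks is then immediate from matching the two expressions for $\Omega$ in Definition \ref{Prop: OmegaDefinition} via the Gauss--Borel factorizations of the original and perturbed moment matrices, exactly as in the stable regime $n\ge M^R$. An application of Theorem \ref{Th: Existence II} concludes the argument.

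The delicate point is verifying the precise leading-entry structure of $\tilde{A}_{M^R-1}(x)$, in particular that its last nonzero coefficient sits in the distinguished position $p-r+1$. This is where Conditions \eqref{Cond: CondicionesMatricesLideresFinales_1} and \eqref{Cond: CondicionesMatricesLideresFinales_2} on the leading and sub-leading coefficients of $R(x)$ and $L(x)$ enter decisively: they dictate the banded structures of $R(\Lambda_{[p]}^\top)$ and $L(\Lambda_{[q]})$, and hence force which block columns of $\Omega$ become active at each step of the induction. Tracking these activations carefully---essentially an extension of the inductive argument from Theorem~3.1 of \cite{Manas_Rojas_Geronimus} already invoked in the proof of Theorem \ref{Th: Existence II}---is the only genuinely delicate step in the reduction.
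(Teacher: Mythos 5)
Your proposal follows essentially the same route as the paper: the theorem is obtained by reducing to Theorem \ref{Th: Existence II}, using the non-vanishing of the extended $\tau_n$ of Equation \eqref{Eq: Tau_M^R} to solve the initial linear systems for the first $M^R$ rows of $\Omega$, hence to produce $\tilde{B}_n(x)$ for $n<M^R$ satisfying the type II orthogonality relations, and from these the $M^R$ initial vector polynomials $\tilde{A}_n(x)$ demanded by the third bullet of that theorem. The one overreach is your side claim that the leading principal minors of the initial block of $\tilde{\mathscr{M}}$ are \emph{precisely} these $\tau_n$ — the paper only asserts that the initial orthogonality relations ensure the existence of the required $\tilde{A}_n$ and does not establish that identification — but this does not alter the structure of the argument.
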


\section*{Summary and Outlook}

This work has developed a matrix-analytic framework for general Uvarov-type perturbations of mixed-type multiple orthogonal polynomials on the step line.
By introducing regular matrix polynomials acting on both sides of a rectangular matrix of measures and allowing for rational and additive modifications, the Christoffel, Geronimus, and Uvarov transformations have been unified within a single algebraic and spectral scheme.
Explicit connection formulas, determinant representations, and $\tau$–determinant conditions ensuring the existence of perturbed orthogonality have been obtained.

The analysis has revealed the intrinsic matrix structure of these transformations, their interpretation as structured algebraic and spectral modifications of block-banded operators, and the central role of the Markov–Stieltjes matrix function in encoding such deformations.
The equivalence between the non-vanishing of the $\tau$–determinants and the actual existence of the Uvarov-type transformation, however, remains an open question.
While in the Christoffel and Geronimus cases sufficiency follows respectively from divisibility arguments for matrix polynomials and from spectral considerations, in the Uvarov case sufficiency has been established only under the additional hypothesis that the connection matrix $\Omega$ admits a Gauss–Borel (or $LU$) factorization.
Removing this structural assumption—or alternatively constructing an explicit counterexample—constitutes a central open problem for future research.

Possible extensions of the present work include the development of a general theory of Uvarov-type perturbations beyond the step-line configuration, and the formulation of analogous constructions for multivariate and matrix-variate orthogonal polynomials.
These directions are naturally connected with earlier studies on multivariate orthogonal polynomials and their Christoffel transformations~\cite{GM1,GM2}, as well as with ongoing research on bivariate multiple orthogonal polynomials of mixed type on the step line~\cite{GMJ}.
Both avenues point toward a broader algebraic framework encompassing higher-dimensional settings, where the interplay between matrix analysis, orthogonality, and integrable systems becomes even richer.
These perspectives will be pursued in subsequent research currently in progress.

	\section*{Acknowledgments}

The authors acknowledges research project [PID2021- 122154NB-I00], \emph{Ortogonalidad y Aproximación con Aplicaciones en Machine Learning y Teoría de la Probabilidad}  funded  by
\href{https://doi.org/10.13039/501100011033}{MICIU/AEI/10.13039 /501100011033} and by "ERDF A Way of making Europe” and   PID2024-155133NB-I00,  \emph{Ortogonalidad, Aproximación e Integrabilidad: Aplicaciones en Procesos Estocásticos Clásicos y Cuánticos}.

\section*{Declarations}

\begin{enumerate}
	\item \textbf{Conflict of interest:} The authors declare no conflict of interest.
	\item \textbf{Ethical approval:} Not applicable.
	\item \textbf{Contributions:} All the authors have contribute equally.
\end{enumerate}

\underline{Corresponding author:} Manuel Mañas, manuel.manas@ucm.es


\begin{thebibliography}{99}
	
	\bibitem{adler}
	M. Adler, P. van Moerbeke and P. Vanhaecke,
	\emph{Moment Matrices and Multi-Component KP, with Applications to Random Matrix Theory},
	Communications in Mathematical Physics \textbf{286} (2009)
	1–38.
	
	\bibitem{Apery} R. Apéry,
	\emph{Irrationalité de $\zeta(2)$ et $\zeta(3)$},
	Astérisque \textbf{61} (1979) 11–13.
	
	\bibitem{Aptekarev_Kaliaguine_Lopez}
	A. Aptekarev, V. Kaliaguine, G. López Lagomasino, and I. Rocha,
	\emph{On the Limit Behavior of Recurrence Coefficients for Multiple Orthogonal Polynomials},
	Journal of Approximation Theory \textbf{139} (2006)	346–370.
	
	\bibitem{AAGMM} C. Álvarez-Fernández, G. Ariznabarreta, J. C. García-Ardila, M. Mañas, and F. Marcellán, 
	\emph{Christoffel Transformations for Matrix Orthogonal Polynomials on the Real Line and the Non-Abelian 2D Toda Lattice Hierarchy},
	International Mathematics Research Notices \textbf{2017} (2017) 128–1341.
	
	\bibitem{afm} C. Álvarez-Fernández, U. Fidalgo Prieto, and M. Mañas,
	\emph{Multiple Orthogonal Polynomials of Mixed Type: Gauss–Borel Factorization and the Multi-Component 2D Toda Hierarchy},
	Advances in Mathematics \textbf{227} (2011) 1451–1525. 
	
	\bibitem{AGMM} G. Ariznabarreta, J. C. García-Ardila, M. Mañas, and F. Marcellán, 
	\emph{Non-Abelian Integrable Hierarchies: Matrix Biorthogonal Polynomials and Perturbations},
	Journal of Physics A: Mathematical \& Theoretical \textbf{51} (2018) 205204.
	
	\bibitem{AGMM2} G. Ariznabarreta, J. C. García-Ardila, M. Mañas, and F. Marcellán, 
	\emph{Matrix Biorthogonal Polynomials on the Real Line: Geronimus Transformations},
	Bulletin of Mathematical Sciences \textbf{9} (2019) 1950007 (68 pages). 
	
	\bibitem{AM} G. Ariznabarreta and M. Mañas, 
	\emph{A Jacobi-Type Christoffel–Darboux Formula for Multiple Orthogonal Polynomials of Mixed Type},
	Linear Algebra and its Applications \textbf{468} (2015) 154–170. 
	
	\bibitem{GM1} G. Ariznabarreta and M. Mañas, 
	\emph{Multivariate Orthogonal Polynomials and Integrable Systems},
	Advances in Mathematics \textbf{302} (2016) 628–739.
	
	\bibitem{GM2} G. Ariznabarreta and M. Mañas, 
	\emph{Christoffel Transformations for Multivariate Orthogonal Polynomials},
	Journal of Approximation Theory \textbf{225} (2018) 242–283. 
	
	\bibitem{Ball_Rivoal} K. Balland and T. Rivoal,
	\emph{Irrationalité d’une Infinité de Valeurs de la Fonction Zêta aux Entiers Impairs},
	Inventiones Mathematicae \textbf{146} (2001) 193–207.
	
	\bibitem{Bleher_Kuijlaars} P. M. Bleher and A. B. J. Kuijlaars,
	\emph{Random Matrices with External Source and Multiple Orthogonal Polynomials},
	International Mathematics Research Notices \textbf{2004} (2004) 109–129.
	
	\bibitem{CRM} 
	A. Branquinho, J. E. F. Díaz, A. Foulquié-Moreno, and M. Mañas, 
	\emph{Markov Chains and Multiple Orthogonality},
	To appear in ``Orthogonal Polynomials, Special Functions and Applications — Proceedings of the 16th International Symposium, Montreal, Canada, in Honor to Richard Askey'' in the CRM Series in Mathematical Physics, 2024.
	
	\bibitem{finite} 
	A. Branquinho, J. E. F. Díaz, A. Foulquié-Moreno, and M. Mañas,
	\emph{Finite Markov Chains and Multiple Orthogonal Polynomials},
	Journal of Computational and Applied Mathematics \textbf{463} (2025) 116485.
	
	\bibitem{JP}
	A. Branquinho, J. E. F. Díaz, A. Foulquié-Moreno, M. Mañas, and C. Álvarez-Fernández,
	\emph{Jacobi–Piñeiro Markov Chains},
	Revista de la Real Academia de Ciencias Exactas, Físicas y Naturales. Serie A. Matemáticas \textbf{118} (2024) article number 15.
	
	\bibitem{hypergeometric} 
	A. Branquinho, J. E. F. Díaz, A. Foulquié-Moreno, and M. Mañas,
	\emph{Uniform Multiple Orthogonal Polynomials and Markov Chains},
	Numerical Algorithms (2025). doi:\href{https://doi.org/10.1007/s11075-025-02195-6}{10.1007/s11075-025-02195-6}.
	
	\bibitem{ExamplesMOP} 
	A. Branquinho, J. E. F. Díaz, A. Foulquié-Moreno, and M. Mañas, 
	\emph{Classical Multiple Orthogonal Polynomials for Arbitrary Number of Weights and Their Explicit Representation},
	Studies in Applied Mathematics \textbf{154} (2025) e70033.
	
	\bibitem{ExamplesMOP2} 
	A. Branquinho, J. E. F. Díaz, A. Foulquié-Moreno, M. Mañas, and Th. Wolfs,
	\emph{Integral and Hypergeometric Representations for Multiple Orthogonal Polynomials},
	International Mathematics Research Notices \textbf{2025}(12) (2025) rnaf168. doi:\href{https://doi.org/10.1093/imrn/rnaf168}{10.1093/imrn/rnaf168}.
	
	\bibitem{bfm} A. Branquinho, A. Foulquié-Moreno, and M. Mañas, 
	\emph{Multiple Orthogonal Polynomials: Pearson Equations and Christoffel Formulas},  
	Analysis and Mathematical Physics \textbf{12} (2022) paper 129.
	
	\bibitem{aim} A. Branquinho, A. Foulquié-Moreno, and M. Mañas, 
	\emph{Spectral Theory for Bounded Banded Matrices with Positive Bidiagonal Factorization and Mixed Multiple Orthogonal Polynomials},
	Advances in Mathematics \textbf{434} (2023) 109313. 
	
	\bibitem{laa} A. Branquinho, A. Foulquié-Moreno, and M. Mañas, 
	\emph{Positive Bidiagonal Factorization of Tetradiagonal Hessenberg Matrices},
	Linear Algebra and its Applications \textbf{667} (2023) 132–160. 
	
	\bibitem{phys-scrip}  A. Branquinho, A. Foulquié-Moreno, and M. Mañas, 
	\emph{Oscillatory Banded Hessenberg Matrices, Multiple Orthogonal Polynomials and Random Walks},
	Physica Scripta \textbf{98} (2023) 105223. 
	
	\bibitem{Contemporary} A. Branquinho, A. Foulquié-Moreno, and M. Mañas, 
	\emph{Banded Matrices and Their Orthogonality},  
	To appear in Recent Progress in Special Functions, Galina Filipuk, editor. AMS Contemporary Mathematics, 2024.
	
	\bibitem{BTP} A. Branquinho, A. Foulquié-Moreno, and M. Mañas, 
	\emph{Banded Totally Positive Matrices and Normality for Mixed Multiple Orthogonal Polynomials},  
	2024. \hyperref{https://arxiv.org/abs/2404.13965}{}{}{arXiv:2404.13965}.
	
	\bibitem{Bue1} M. I. Bueno and F. Marcellán, 
	\emph{Darboux Transformation and Perturbation of Linear Functionals},
	Linear Algebra and its Applications \textbf{384} (2004) 215–242.
	
	\bibitem{Chi} T. S. Chihara, 
	\emph{An Introduction to Orthogonal Polynomials}, 
	In: \emph{Mathematics and Its Applications Series}, Vol. \textbf{13}, Gordon and Breach Science Publishers, New York–London–Paris, 1978.
	
	\bibitem{christoffel} E. B. Christoffel, 
	\emph{Über die Gaussische Quadratur und Eine Verallgemeinerung Derselben},
	Journal für die Reine und Angewandte Mathematik \textbf{55} (1858) 61–82.
	
	\bibitem{Evi_Arno}
	E. Daems and A. B. J. Kuijlaars, 
	\emph{Multiple Orthogonal Polynomials of Mixed Type and Non-Intersecting Brownian Motions},
	Journal of Approximation Theory \textbf{146} (2007)	91–114.
	
	\bibitem{darboux2} G. Darboux,
	\emph{Sur Une Proposition Relative aux Équations Linéaires},  
	Comptes Rendus Hebdomadaires des Séances de l’Académie des Sciences \textbf{94} (1882) 1456–1459.
	
	\bibitem{Gaut} W. Gautschi, 
	\emph{An Algorithmic Implementation of the Generalized Christoffel Theorem}, 
	in Numerical Integration, edited by G. Hämmerlin, Int. S. Num. M. \textbf{57}, Birkhäuser, Basel, 1982, 89–106.
	
	\bibitem{Geronimus} J. Geronimus, 
	\emph{On Polynomials Orthogonal with Regard to a Given Sequence of Numbers}, 
	Comm. Inst. Sci. Math. Mec. Univ. Kharkoff [Zapiski Inst. Mat. Mech.] (4) \textbf{17} (1940) 3–18.
	
	\bibitem{MatrixPoly}  I. Gohberg, P. Lancaster, and L. Rodman, 
	\emph{Matrix Polynomials}, 
	Classics in Applied Mathematics \textbf{58}, SIAM, 2009.
	
	\bibitem{Ismail}
	M. E. H. Ismail,
	\emph{Classical and Quantum Orthogonal Polynomials in One Variable},
	Cambridge University Press, Cambridge, 2005.
	
	\bibitem{KozhanChris}
	R. Kozhan and M. Vaktnäs, 
	\emph{Christoffel Transform and Multiple Orthogonal Polynomials},
	Journal of Computational and Applied Mathematics \textbf{476} 117121.
	
	\bibitem{KozhanUvarov}
	R. Kozhan and M. Vaktnäs, 
	\emph{Determinantal Formulas for Rational Perturbations of Multiple Orthogonality Measures},
	\hyperref{https://doi.org/10.48550/arXiv.2407.13961}{}{}{arXiv:2407.13961}.
	
	\bibitem{manas} M. Mañas, 
	\emph{Revisiting Biorthogonal Polynomials: An LU Factorization Discussion}, 
	EIBPOA: Iberoamerican School on Orthogonal Polynomials and Applications 2018. SEMA SIMAI Springer Series. Edited by E. J. Huertas and F. Marcellán, Springer Verlag, 2021.
	
	\bibitem{Manas_Rojas_Christoffel} M. Mañas and M. Rojas, 
	\emph{General Christoffel Perturbations for Mixed Multiple Orthogonal Polynomials},
	Numerical Algorithms (2025). doi:\href{https://doi.org/10.1007/s11075-025-02230-6}{10.1007/s11075-025-02230-6}.
	
	\bibitem{Manas_Rojas_Geronimus} M. Mañas and M. Rojas, 
	\emph{General Geronimus Perturbations for Mixed Multiple Orthogonal Polynomials},
	Analysis and Mathematical Physics \textbf{15} (2025) article number 50.
	
	\bibitem{GMJ} M. Mañas, M. Rojas, and J. Wu, 
	\emph{Bivariate Multiple Orthogonal Polynomials of Mixed Type on the Step-Line},  
	arXiv: \href{https://arxiv.org/abs/2507.17694}{2507.17694}.  
	
	\bibitem{Maro} P. Maroni, 
	\emph{Sur la Suite de Polynômes Orthogonaux Associée á la Forme $u = \delta_c +\lambda(x-c)^{-1}$}, 
	Periodica Mathematica Hungarica \textbf{21} (1990) 223–248.
	
	\bibitem{andrei_walter}
	A. Martínez-Finkelshtein and W. Van Assche,
	\emph{What Is a Multiple Orthogonal Polynomial?},
	Notices of the AMS \textbf{63} (2016) 1029–1031. 
	
	\bibitem{matveev} V. B. Matveev and  M. A. Salle, 
	\emph{Differential–Difference Evolution Equations II: Darboux Transformation for the Toda Lattice},
	Letters in Mathematical Physics \textbf{3} (1979) 425–429.
	
	\bibitem{moutard} Th. F. Moutard,
	\emph{Sur la Construction des Équations de la Forme $(1/z)\partial^2 z/\partial x\partial y=\lambda(x,y)$ Qui Admettenent une Intégrale Générale Explicite},
	Journal de l’École Polytechnique \textbf{45} (1878) 1–11.
	
	\bibitem{nikishin_sorokin}
	E. M. Nikishin and V. N. Sorokin,
	\emph{Rational Approximations and Orthogonality},
	Translations of Mathematical Monographs \textbf{92},
	American Mathematical Society, Providence, 1991.
	
	\bibitem{Sze} G. Szeg\H{o}, 
	\emph{Orthogonal Polynomials}, 
	4th ed., AMS Colloquium Publication Series, vol. \textbf{23}, AMS, Providence RI, 1975.
	
	\bibitem{Uva} V. B. Uvarov, 
	\emph{The Connection Between Systems of Polynomials That Are Orthogonal with Respect to Different Distribution Functions}, 
	USSR Computational and Mathematical Physics \textbf{9} (1969) 25–36.
	
	\bibitem{Yoon} G. Yoon,  
	\emph{Darboux Transforms and Orthogonal Polynomials}, 
	Bulletin of the Korean Mathematical Society \textbf{39} (2002) 359–376.
	
	\bibitem{Zhe} A. Zhedanov, 
	\emph{Rational Spectral Transformations and Orthogonal Polynomials},
	Journal of Computational and Applied Mathematics \textbf{85} (1997) 67–86.
	
	\bibitem{Zudilin}
	V. Zudilin,
	\emph{One of the Numbers $ \zeta(5), \zeta(7), \zeta(9), \zeta(11)$ Is Irrational},
	Russian Mathematical Surveys \textbf{56} (2001) 774–776.
	
\end{thebibliography}
\end{document}